\newcommand{\ds}{\displaystyle}
\newcommand{\R}{\mathbb{R}}
\newcommand{\torus}{{\mathbb{T}}}
\newcommand{\gu}{\mathbf{u}}
\newcommand{\gm}{\mathbf{m}}
\newtheorem{theorem}{Theorem}[section]
\newtheorem{remark}{Remark}[section]
\newtheorem{definition}{Definition}[section]
\newenvironment{proof}{{\bf Proof }}{\hbox{~} \hfill \rule{0.5em}{0.5em}\\}
\numberwithin{equation}{section}
\begin{document}
\title{A Robin boundary condition problem of sand transport equations]
 {An existence and homogenization results of degenerate parabolic sand problem transport equations with robin boundary condition}
}


\centerline{\scshape Babou Khady Thiam
\footnote{mbaboukhthiam87.bt@gmail.com}}
\medskip
{\footnotesize
 \centerline{Universit\'e Alioune Diop de Bambey, }
  \centerline{ Equipe de recherche Analyse Non Lin\'eaire et G\'eom\'etrie  }
   \centerline{Laboratoire de Math\'ematiques de la D\'ecision et d'Analyse Num\'erique}
    \centerline{ (L.M.D.A.N) F.A.S.E.G)/F.S.T. }
}

\medskip
\centerline{\scshape M. A. M. T. Bald\'e \footnote{imouhamadouamt.balde@ucad.edu.sn}}
\medskip
{\footnotesize
 \centerline{Universit\'e Cheikh Anta Diop de Daka,  (S\'en\'egal),}
   \centerline{Laboratoire de Math\'ematiques de la D\'ecision et d'Analyse Num\'erique}
\centerline{ (L.M.D.A.N). }
} 

\medskip
\centerline{\scshape Ibrahima Faye \footnote{ibrahima.faye@uadb.edu.sn}, Diaraf Seck \footnote{diaraf.seck@ucad.edu.sn }}
\medskip
{\footnotesize
 \centerline{Universit\'e Alioune Diop de Bambey, UFR S.A.T.I.C, BP 30 Bambey (S\'en\'egal),}
   \centerline{Laboratoire de Math\'ematiques de la D\'ecision et d'Analyse Num\'erique}
\centerline{ (L.M.D.A.N). }
} 
\medskip
{\footnotesize
}



\begin{abstract}
This paper focuses on the theoretical study of
degenerate parabolic sand transport equations in a non periodic domain with Robin boundary condition.
We give existence and uniqueness results for the models which is also homogenized. Finally some corrector results are given.
\end{abstract}
Primary 35K65, 35B25, 35B10; Secondary 92F05, 86A60.\\
Keywords: Modeling, dynamical of dune, PDE, homogenization, two scale convergence, corrector result.
\maketitle
\section{Introduction and Results}
In this paper, we are interested to the well-posedness of models, built and studied in \cite{FaFreSe}. They are models valid for 
short, mean and long term dynamics of dunes in tidal area.
Considering the transport flow due to Van Rijn see \cite{nguetseng:1989}, Faye et al \cite{FaFreSe} showed that
\begin{equation}\label{0beq1}\left\{\begin{array}{ccc}\frac{\partial z^\epsilon}{\partial t}-\frac1\epsilon\nabla\cdot\Big(\mathcal A^\epsilon\nabla z^\epsilon\Big)=\frac1\epsilon\nabla\cdot\mathcal C^\epsilon \,\,\text{in}\,\,[0,T)\times\torus^2\\
z^\epsilon(0,x)=z_0(x)\,\,\text{in}\,\,\torus^2,\\
\end{array}\right.
\end{equation}
is a relevant model for short and mean term dynamics of dunes near the sea bed,
where $\,\,z_0\in H^1(\torus^2)$ is a given function and $\torus^2$ is the two dimensional torus. The coefficients $\mathcal A^\epsilon$ and $\mathcal C^\epsilon$ are given by
\begin{equation}\label{aceps}
\mathcal A^\epsilon(t, x)=a(1-b\epsilon \gm)g_a(|\mathbf u|),\,\,\text{and}\,\,\mathcal C^\epsilon(t,x)=
c(1-b\epsilon \gm)g_c(|\mathbf u|)\frac{\gu}{|\gu|}
\end{equation} 

where $a>0,\,\,b$ and $c$ are real numbers. 
The fields $\gu: [0, T)\times\torus^2\rightarrow \mathbb R^2$ and $\gm: [0, T)\times\torus^2\rightarrow \mathbb R$  are respectively the water velocity and the height variation due to the tide.  We suppose that they are given by 

\begin{equation}\label{epsmu1}
\gm(t,x)=\mathcal M(t,\frac t\epsilon,x)\,\,\text{and} \,\,\gu(t,x)=\mathcal U(t,\frac t\epsilon, x)
\end{equation} in the short term where $\ds \mathcal{U}$ and $\mathcal{M}$ are regular functions on 
$\mathbb{R^+}\times\mathbb{R}\times\torus^{2}$ such that\\
\begin{equation}\label{leq2}\ds\theta\longmapsto(\mathcal{U}(t,\theta,x),\mathcal{M}(t,\theta,x))\,\,\text{is periodic of period 1}\end{equation}
and 
\begin{equation}\label{coef A}
\mathcal A^\epsilon(t, x)=a(1-b\sqrt{\epsilon}\gm)g_a(|\mathbf u|),\,\,\text{and}\,\,\mathcal C^\epsilon(t,x)=c(1-b
\sqrt{ \epsilon}\gm)g_c(|\mathbf u|)\frac{\gu}{|\gu|}\end{equation}
where
\begin{equation}\label{epsmu2}
\gm(t,x)=\mathcal M(t,\frac {t}{\sqrt\epsilon},\frac t\epsilon,x)\,\,\text{and} \,\,\gu(t,x)=\mathcal U(t,\frac {t}{\sqrt\epsilon},\frac t\epsilon,x)
\end{equation} in the mean term where $\mathcal U$ and $\mathcal M$ are regular functions on 
$\mathbb{R^+}\times\mathbb{R}\times\mathbb{R}\times\torus^{2}$ such that
\begin{equation}\label{leq1}\left\{ \begin{array}{ccc}
        \ds\tau\longmapsto(\mathcal{U}(t,\tau,\theta, x),\mathcal{M}(t, \tau, \theta, x))\\
        \ds\theta\longmapsto(\mathcal{U}(t,\tau,\theta, x),\mathcal{M}(t, \tau, \theta, x))\end{array}\right.
\end{equation}
is periodic of period 1. \\
The model valid for long term dynamics of dunes is given by
\begin{equation}\label{0beq2}\left\{\begin{array}{ccc}\frac{\partial z^\epsilon}{\partial t}-\frac{1}{\epsilon^2}\nabla\cdot\Big(\mathcal A^\epsilon\nabla z^\epsilon\Big)=\frac{1}{\epsilon^2}\nabla\cdot\mathcal C^\epsilon\,\,\text{in} \,\,[0,T)\times\torus^2\\
z^\epsilon(0,x)=z_0(x)\,\,\text{in}\,\,\torus^2,\\
\end{array}\right.
\end{equation}
where $\mathcal A^\epsilon$ and $\mathcal C^\epsilon$ are given by (\ref{aceps}) and $\gu$ and $\gm$ satisfy 
\begin{equation}\label{0beq3}
\gm(t,x)=\mathcal M(t,\frac t\epsilon,x)=\mathcal M_1(\frac t\epsilon,x)+\epsilon^2 \mathcal M_2(t,\frac t\epsilon,x)
\,\,\text{and}$$
 $$\gu(t,x)=\mathcal U(t,\frac t\epsilon,x)=\mathcal U_0(\frac t\epsilon)+\epsilon\mathcal U_1(\frac t\epsilon,x)
+\epsilon^2\mathcal U_2(t,\frac t\epsilon,x).
\end{equation}
Functions $\mathcal U$ and $\mathcal M$ in (\ref{epsmu1}) having the same behavior as in (\ref{epsmu2}) satisfy the following hypotheses
\begin{equation}\label{eq40}\left\{ \begin{array}{ccc}
        \ds |\mathcal{U}|,\,\,|\frac{\partial \mathcal{U}}{\partial t}|,\,\,|\frac{\partial \mathcal{U}}{\partial \theta}|,\,\,
|\frac{\partial \mathcal{U}}{\partial \tau}|,\,\,|\nabla \mathcal{U}|,\\
|\mathcal{M}|,\,\,|\frac{\partial \mathcal{M}}{\partial t}|,\,\,|\frac{\partial \mathcal{M}}{\partial \theta}|,\,\,
|\frac{\partial \mathcal{M}}{\partial \tau}|,\,\,|\nabla \mathcal{M}| \,\,\textrm{are bounded by}\,\,d,\\
           \ds \forall (t,\tau,\theta,x)\in\mathbb{R}^{+}\times\mathbb{R}\times\mathbb{R}\times\torus^{2},\,\,|\mathcal{U}(t,\tau,\theta,x)|\leq U_{thr} \Longrightarrow
\hspace{3cm }\\ \hspace{3cm} \ds \frac{\partial\mathcal{U}}{\partial t}=0,\,\,\ds \frac{\partial\mathcal{U}}{\partial \tau}=0,
\,\, \frac{\partial\mathcal{U}}{\partial \theta}=0,\,\,
 \frac{\partial\mathcal{M}}{\partial t}=0,\,\,\ds \frac{\partial\mathcal{M}}{\partial \tau}=0,
\,\,\ds \frac{\partial\mathcal{M}}{\partial \theta}=0,\\
\,\,\nabla\mathcal{M}(t,\tau,\theta,x)=0\,\,\textrm{and}\,\,\nabla\mathcal{U}(t,\tau,\theta,x)=0,\\
            \ds\exists \theta_{\alpha}<\theta_{\omega}\in[0,1]\,\,\textrm{such that}\,\, \forall\,\,\theta\in [\theta_{\alpha},\theta_{\omega}],\,\,\text{we have}\,\,|\mathcal{U}(t,\tau,\theta,x)|\geq U_{thr}\end{array}\right.
\end{equation}
with
 $g_a$ and $g_c$ are positive functions satisfying the following hypotheses
\begin{equation}
\label{hyp1}\left\{ \begin{array}{ccc}
g_{a}\geq g_{c}\geq0,\,\,g_{c}(0)=g'_{c}(0)=0,\\~\sup_{u\in\mathbb{R}^{+}}|g_{a}(u)|+\sup_{u\in\mathbb{R}^{+}}|g'_{a}(u)|\leq d,~\\
    \sup_{u\in\mathbb{R}^{+}}|g_{c}(u)|+\sup_{u\in\mathbb{R}^{+}}|g'_{c}(u)|\leq d,\\
   \,\,\exists\, G_{thr}>0,\,\,\textrm{such that}\,\, u\geq U_{thr}\Longrightarrow g_{a}(u)\geq G_{thr}.\end{array}\right.
\end{equation}
The objective of this paper is to study the well-posedness of the models (\ref{0beq1}) and (\ref{0beq2}) coupled with (\ref{epsmu1})
or (\ref{epsmu2}) in the cases of short and mean term models or (\ref{0beq3}) in the long term model in a domain $\Omega$ with boundary $\partial\Omega$ containing the two dimensional torus $\torus^2$. 
In other words, we are interested in the existence and the uniqueness of solutions to the  following two problems posed in 
the domain  $\Omega$ of class $\mathcal C^1.$\\
\begin{equation}\label{short1}\left\{\begin{array}{ccc}\frac{\partial z^\epsilon}{\partial t}-\frac1\epsilon\nabla\cdot\Big(\mathcal A^\epsilon\nabla z^\epsilon\Big)=\frac1\epsilon\nabla\cdot\mathcal C^\epsilon\,\,\text{in} \,\,[0,T)\times\Omega\\
z^\epsilon(0,x)=z_0(x) \,\,\text{in}\,\,\Omega,\\
\frac{\partial z^\epsilon}{\partial n}+z^\epsilon= g\,\,\text{on}\,\,[0,T)\times\partial \Omega
\end{array}\right.
\end{equation}
and
\begin{equation}\label{long1}\left\{\begin{array}{ccc}\frac{\partial z^\epsilon}{\partial t}-\frac{1}{\epsilon^2}\nabla\cdot\Big(\mathcal A^\epsilon\nabla z^\epsilon\Big)=\frac{1}{\epsilon^2}\nabla\cdot\mathcal C^\epsilon\,\,\text{in} \,\,[0,T)\times\Omega\\
z^\epsilon(0,x)=z_0(x)\,\,\text{in}\,\,\Omega,\\
\frac{\partial z^\epsilon}{\partial n}+z^\epsilon= g\,\,\text{on}\,\,[0,T)\times\partial \Omega
\end{array}\right.
\end{equation}
where $g\in L^2([0,T),L^2(\Omega))$ and $z_0\in L^2(\Omega)$.\\
Then we will show that the solutions of the two problems are bounded independently of $\epsilon.$
The second step consists in the homogenization and correction results for the two problems mentioned.
Our first result is given by:

\begin{theorem} 
 \label{th1.0} Let $\Omega$ be a measurable  set of classe $\mathcal C^1.$  For all $T>0$ and $\epsilon>0$, under assumptions (\ref{eq40}), (\ref{hyp1}), (\ref{epsmu1}) and (\ref{epsmu2}) or (1.9)\, 
if $z_0\in H^1(\Omega)$ and $g\in L^2([0,T), H^1(\mathbb R^2)),$ there exists a unique function 
$z^\epsilon\in L^\infty([0,T), H^1(\Omega))$ solution to (\ref{short1}) or (\ref{long1}). This solution satisfies
\begin{equation}\label{eqborn}
\Big\| z^\epsilon\Big\|_{L^\infty([0,T), L^2(\Omega))}\leq \tilde\gamma
\end{equation}
where $\tilde\gamma$ is a constant depending only on $z_0,\,\,G_{thr}$ and $g.$\\
Moreover, this solution satisfies
\begin{equation}
\frac{d}{dt}\int_\Omega z^\epsilon(t,x)dx=0.
\end{equation}
\end{theorem}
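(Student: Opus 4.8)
The plan is to treat (\ref{short1}) and (\ref{long1}) as \emph{linear} (in $z^\epsilon$) degenerate parabolic problems---note that $\mathcal A^\epsilon$ and $\mathcal C^\epsilon$ depend on the prescribed fields $\gu,\gm$ only, not on $z^\epsilon$---and to build solutions by a Faedo--Galerkin scheme after a uniformly parabolic regularization. First I would write the variational formulation: for $v\in H^1(\Omega)$,
\begin{equation*}
\int_\Omega\partial_t z^\epsilon\,v\,dx+\frac1\epsilon\int_\Omega\mathcal A^\epsilon\nabla z^\epsilon\cdot\nabla v\,dx+\frac1\epsilon\int_{\partial\Omega}z^\epsilon v\,ds=\frac1\epsilon\int_{\partial\Omega}g\,v\,ds-\frac1\epsilon\int_\Omega\mathcal C^\epsilon\cdot\nabla v\,dx,
\end{equation*}
the Robin condition being encoded in the boundary integrals. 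Because $\mathcal A^\epsilon$ may vanish (the problem is genuinely degenerate), I would replace $\mathcal A^\epsilon$ by $\mathcal A^\epsilon+\eta$, $\eta>0$, so that the bilinear form becomes $H^1$-coercive; existence and uniqueness for the regularized problem then follow from the standard Lions theorem (or from projecting onto a Hilbert basis of $H^1(\Omega)$ and solving the resulting linear ODE system), giving $z^\epsilon_\eta\in L^2([0,T),H^1(\Omega))\cap C([0,T),L^2(\Omega))$.

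The heart of the matter is the a priori estimate obtained by taking $v=z^\epsilon_\eta$. The boundary term $\frac1\epsilon\int_{\partial\Omega}|z^\epsilon_\eta|^2$ is nonnegative and half of it absorbs $\frac1\epsilon\int_{\partial\Omega}g\,z^\epsilon_\eta$. The one delicate term is the advection $-\frac1\epsilon\int_\Omega\mathcal C^\epsilon\cdot\nabla z^\epsilon_\eta$, which a priori is not controlled by the degenerate diffusion $\frac1\epsilon\int_\Omega(\mathcal A^\epsilon+\eta)|\nabla z^\epsilon_\eta|^2$. Here the structural hypothesis $g_a\ge g_c\ge0$ from (\ref{hyp1}) is decisive: since $\mathcal A^\epsilon$ and $\mathcal C^\epsilon$ in (\ref{aceps}) carry the same factor $(1-b\epsilon\gm)$, one gets $\frac{|\mathcal C^\epsilon|^2}{\mathcal A^\epsilon}\le C\,\frac{g_c^2}{g_a}\le C\,g_c\le Cd$ wherever $\mathcal A^\epsilon>0$, while $\mathcal C^\epsilon=0$ wherever $g_a=0$ (as then $g_c=0$ too). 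Thus $\mathcal C^\epsilon$ is subordinate to the diffusion, and a weighted Cauchy--Schwarz inequality lets one absorb half of the advection into $\frac1\epsilon\int\mathcal A^\epsilon|\nabla z^\epsilon_\eta|^2$, the remainder being bounded by $\frac{C}{\epsilon}\int_\Omega\mathcal A^\epsilon\,dx$. Gronwall's inequality then yields an $\eta$-uniform bound in $L^\infty([0,T),L^2(\Omega))$ together with control of $(\mathcal A^\epsilon+\eta)^{1/2}\nabla z^\epsilon_\eta$ in $L^2$. To reach the announced $L^\infty([0,T),H^1(\Omega))$ regularity I would run a second, higher-order estimate (testing against $\partial_t z^\epsilon_\eta$, or differentiating in space): on the subcritical set $\{|\gu|\le U_{thr}\}$ hypothesis (\ref{eq40}) freezes the coefficients and annihilates their time- and space-derivatives, so no regularity is lost there, while on $\{|\gu|\ge U_{thr}\}$ one has $\mathcal A^\epsilon\ge aG_{thr}(1-b\epsilon\gm)>0$ by (\ref{hyp1}) and the equation is uniformly parabolic; since (\ref{eq40}) guarantees a recurring window $\theta\in[\theta_\alpha,\theta_\omega]$ of uniform ellipticity on all of $\Omega$, combining the two regimes closes the $H^1$ estimate for fixed $\epsilon$. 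Passing to the limit $\eta\to0$ along a weakly convergent subsequence, and using the linearity, identifies the limit as a solution in $L^\infty([0,T),H^1(\Omega))$.

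Uniqueness is immediate from linearity: the difference of two solutions solves the homogeneous problem, and the energy identity (with the nonnegative diffusion and boundary terms) forces it to vanish. For the $\epsilon$-uniform bound (\ref{eqborn}) the naive energy estimate is insufficient, since it leaves a factor $\frac1\epsilon$; here I would exploit the prepared oscillatory structure of $\gu,\gm$ in (\ref{epsmu1}), (\ref{epsmu2}) and (\ref{0beq3}), whereby the leading-order field ($\mathcal U_0$ in (\ref{0beq3})) is independent of $x$, so that the leading part of $\mathcal C^\epsilon$ is divergence-free and the singular source $\frac1\epsilon\nabla\cdot\mathcal C^\epsilon$ (resp. $\frac{1}{\epsilon^2}\nabla\cdot\mathcal C^\epsilon$ for (\ref{long1})) loses its most singular contribution; the remaining $O(1)$ source, together with Gronwall, gives a bound depending only on $z_0$, $G_{thr}$ and $g$. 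Finally, the conservation law follows by taking the constant test function $v\equiv1$: the two divergence-form terms reduce to boundary integrals of the total flux $\mathcal A^\epsilon\nabla z^\epsilon\cdot n+\mathcal C^\epsilon\cdot n$, which cancel in view of the boundary condition in (\ref{short1}), leaving $\frac{d}{dt}\int_\Omega z^\epsilon\,dx=0$. The main obstacle throughout is the simultaneous presence of the degeneracy of $\mathcal A^\epsilon$ (loss of $H^1$-coercivity) and the singular $\frac1\epsilon$-scaling; the former is defeated by the subordination $g_a\ge g_c$ of (\ref{hyp1}) together with the freezing hypothesis (\ref{eq40}), the latter by the structural expansions of $\gu$ and $\gm$.
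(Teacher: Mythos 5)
Your fixed-$\epsilon$ machinery (regularize $\mathcal A^\epsilon$ by $\eta>0$, Galerkin, energy estimates, uniqueness by the energy identity for the difference of two solutions) is sound and close in spirit to the paper, which also works with a doubly regularized problem and proves uniqueness exactly as you do. The genuine gap is in the one claim that makes the theorem nontrivial: the bound (\ref{eqborn}) uniform in $\epsilon$. Your proposed mechanism — that the leading part of $\mathcal C^\epsilon$ is divergence-free so the singular source loses its worst contribution, after which Gronwall closes the estimate — does not work. First, the "prepared" structure you invoke exists only in the long-term ansatz (\ref{0beq3}), where $\mathcal U_0$ is $x$-independent; in the short and mean term models (\ref{epsmu1}), (\ref{epsmu2}) the field $\mathcal U(t,\theta,x)$ depends on $x$ at leading order, so $\nabla\cdot\mathcal C^\epsilon=O(1)$ and the source in (\ref{short1}) is genuinely $O(1/\epsilon)$. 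Second, even in the long-term case the preparation only gives $\nabla\cdot\mathcal C^\epsilon=O(\epsilon)$ (this is exactly hypothesis (\ref{hupp1}) with $i=1$), so $\frac{1}{\epsilon^2}\nabla\cdot\mathcal C^\epsilon$ is still $O(1/\epsilon)$; a Gronwall argument on an $O(1/\epsilon)$ source yields a bound of order $T/\epsilon$, which blows up as $\epsilon\to0$. No pointwise subordination of $\mathcal C^\epsilon$ to $\mathcal A^\epsilon$ and no Gronwall inequality can repair this, because the uniform bound is not a pointwise-in-time phenomenon at all: it is an averaging effect coming from the $1$-periodicity in the fast time $\theta=t/\epsilon$, which your argument never uses.

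The missing idea — and the core of the paper's proof — is a fast-time corrector. The paper constructs, via the two regularizations (\ref{eqreg2}) and (\ref{eqreg1}) and the limits $\mu\to0$ then $\nu\to0$, a solution $\mathcal S=\mathcal S(t,\theta,x)$, $1$-periodic in $\theta$, of the degenerate periodic problem (\ref{biba}) with the same Robin condition, together with estimates on $\|\mathcal S\|_{L^\infty_\#(\R,H^1(\Omega))}$ and $\|\partial\mathcal S/\partial t\|_{L^2_\#(\mathbb R,H^1(\Omega))}$ that survive the limits. Setting $Z^\epsilon(t,x)=\mathcal S(t,t/\epsilon,x)$, the chain rule $\partial_t Z^\epsilon=(\partial_t\mathcal S)^\epsilon+\frac1\epsilon(\partial_\theta\mathcal S)^\epsilon$ shows that $Z^\epsilon$ absorbs the singular terms of (\ref{shortlong1}) \emph{exactly}, so the difference $z^\epsilon-Z^\epsilon$ solves a problem with homogeneous Robin data, initial datum $z_0-\mathcal S(0,0,\cdot)$, and the bounded source $(\partial_t\mathcal S)^\epsilon$; a plain energy estimate then gives $\|z^\epsilon(t,\cdot)-Z^\epsilon(t,\cdot)\|_2\le\|z_0-\mathcal S(0,0,\cdot)\|_2+\frac12\gamma_9t$, and the triangle inequality with the bound on $\|Z^\epsilon\|_2$ yields (\ref{eqborn}). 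Without this comparison (or an equivalent averaging device) your proposal cannot reach the uniform bound. A secondary, smaller problem: your derivation of $\frac{d}{dt}\int_\Omega z^\epsilon\,dx=0$ by testing with $v\equiv1$ is not correct as stated, since under the Robin condition the boundary flux becomes $\frac{1}{\epsilon^i}\int_{\partial\Omega}\bigl(\mathcal A^\epsilon(g-z^\epsilon)+\mathcal C^\epsilon\cdot n\bigr)d\sigma$, which has no reason to vanish; cancellation of the total flux is automatic for the Neumann condition of \cite{Babou}, not for the Robin one considered here.
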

\noindent The second one concerns homogenization results. Using the fact that (\ref{eqborn}) holds and  two scale convergence method due to Nguetseng\cite{nguetseng:1989}, Allaire \cite{allaire:1992}, Faye et al. \cite{FaFreSe}, the sequence of solutions $(z^\epsilon(t,x))$  Two-scale converges to a profile  $U$ given in the following theorem.  
\begin{theorem}
Let $\Omega$ be a measurable  set of classe $\mathcal C^1.$  For all $T>0$ and $\epsilon>0$, under assumptions (\ref{eq40}), 
(\ref{hyp1}), (\ref{epsmu1})-(\ref{leq1}) and (1.9), the sequence of solutions $(z^\epsilon(t,x))$ 
to (\ref{short1}) Two scale converges to a profile  $U\in L^{\infty}([0,T],L^{\infty}_\#(\R,L^2(\Omega)))$ solution to
\begin{equation}
\left\{\begin{array}{ccc}
\frac{\partial U}{\partial\theta}
-\nabla\cdot(\widetilde{\mathcal{A}}\nabla U)=\nabla \cdot\widetilde{\mathcal{C}}\,\,\text{in}\,\,(0,T)\times\mathbb R\times \Omega\\
\frac{\partial U}{\partial n}+U= g\,\,\text{on}\,\,(0,T)\times\mathbb R\times \partial\Omega\end{array}\right.
\end{equation}
where $\widetilde{\mathcal{A}}$ and $\widetilde{\mathcal{C}}$ are given by
\begin{equation}
\widetilde{\mathcal{A}}(t,\tau,\theta,x)=ag_a(|\mathcal U(t,\tau,\theta,x)|) \,\, \text{and}\,\,\,\,\widetilde{\mathcal{C}}(t,\tau,\theta,x)=cg_c(|\mathcal{U}(t,\tau,\theta,x)|)\,\frac{\mathcal U(t,\tau,\theta,x)}{|\mathcal{U}(t,\tau,\theta,x)|}.
\end{equation}
with $\mathcal U$ given by (\ref{leq2}) or (\ref{leq1}).\\
The sequence of solutions $(z^\epsilon(t,x))$ to (\ref{long1}) Two scale converges to a profile $U\in L^{\infty}([0,T],L^{\infty}_\#(\R,L^2(\Omega)))$ solution to 
\begin{equation}
\left\{\begin{array}{ccc}
-\nabla\cdot(\widetilde{\mathcal{A}}\nabla U)=\nabla \cdot\widetilde{\mathcal{C}}\,\,\text{in}\,\,(0,T)\times\mathbb R\times \Omega\\
\frac{\partial U}{\partial \theta}=0\,\,\text{on}\,\,\Theta_{thr}\\
\frac{\partial U}{\partial n}+U= g\,\,\text{on}\,\,(0,T)\times\mathbb R\times \partial\Omega\end{array}\right.
\end{equation} where $\widetilde{\mathcal{A}}$ and $\widetilde{\mathcal{C}}$ are given by

\begin{equation}
\widetilde{\mathcal{A}}(t,\theta,x)=ag_a(|\mathcal U_0(\theta)|) \,\, \text{and}\,\,\,\,\widetilde{\mathcal{C}}(t,\theta,x)=cg_c(|\mathcal{U}_0(\theta)|)\,\frac{\mathcal U_0(\theta)}{|\mathcal{U}_0(\theta)|}.
\end{equation}
with $\mathcal U_0$ given in (1.9).
\end{theorem}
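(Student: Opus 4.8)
The plan is to run a two-scale convergence argument in the fast time variable $\theta=t/\epsilon$ (and $\tau=t/\sqrt\epsilon$ in the mean-term case), treating $x$ as a slow variable. First I would collect the uniform estimates: by the bound (\ref{eqborn}) of Theorem \ref{th1.0} the family $(z^\epsilon)$ is bounded in $L^\infty([0,T],L^2(\Omega))$, and from the energy identity underlying that theorem — using the coercivity $g_a\ge G_{thr}$ on the threshold set together with the faster vanishing of $\mathcal C$ encoded in $g_a\ge g_c$ and $g_c(0)=g'_c(0)=0$ — the fluxes $\mathcal A^\epsilon\nabla z^\epsilon$ and $\mathcal A^\epsilon\nabla z^\epsilon+\mathcal C^\epsilon$ are bounded in $L^2([0,T]\times\Omega)$ uniformly in $\epsilon$. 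Applying the Nguetseng--Allaire two-scale compactness theorem in the time variable, up to a subsequence $z^\epsilon$ two-scale converges to a $1$-periodic-in-$\theta$ profile $U(t,\theta,x)$, and $\nabla z^\epsilon$ two-scale converges to $\nabla_x U$ (no extra corrector arises, since $\theta$ is a time scale and not a space scale). The smooth oscillating coefficients are admissible and converge two-scale, in fact strongly, to $\widetilde{\mathcal A}=ag_a(|\mathcal U|)$ and $\widetilde{\mathcal C}=cg_c(|\mathcal U|)\mathcal U/|\mathcal U|$, because the factor $1-b\epsilon\gm\to 1$.

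Next I would pass to the limit in the weak formulation. Writing (\ref{short1}) weakly and testing with $v^\epsilon(t,x)=\varphi(t,x)\psi(t/\epsilon)$, where $\varphi\in C^\infty_c([0,T)\times\overline\Omega)$ and $\psi$ is $1$-periodic, I would multiply the identity by $\epsilon$. Integrating the time derivative by parts transfers the factor $\frac1\epsilon\psi'(t/\epsilon)$ onto $z^\epsilon$, so that $\epsilon\int_0^T\!\int_\Omega\partial_t z^\epsilon\,v^\epsilon$ converges two-scale to $-\int_0^T\!\int_0^1\!\int_\Omega U\,\varphi\,\psi'(\theta)$, which after integration by parts in $\theta$ is exactly the $\partial U/\partial\theta$ contribution. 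Meanwhile $\int_0^T\!\int_\Omega \mathcal A^\epsilon\nabla z^\epsilon\cdot\nabla v^\epsilon\to\int\!\int\!\int\widetilde{\mathcal A}\nabla_x U\cdot\nabla\varphi\,\psi$, the source gives $-\int\!\int\!\int\widetilde{\mathcal C}\cdot\nabla\varphi\,\psi$, and the Robin boundary integral $\frac1\epsilon\int_{\partial\Omega}\mathcal A^\epsilon(z^\epsilon-g)v^\epsilon$, after the same multiplication by $\epsilon$, passes to the limit through the trace of the two-scale limit and produces the boundary term associated with $\partial_n U+U=g$. Varying $\varphi$ and $\psi$ yields the weak form of $\partial_\theta U-\nabla\cdot(\widetilde{\mathcal A}\nabla U)=\nabla\cdot\widetilde{\mathcal C}$ with the stated Robin condition.

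For (\ref{long1}) the same scheme applies with multiplier $\epsilon^2$: the fast-time derivative of the test function now enters only at order $\epsilon$ and drops out, so the leading balance is the stationary-in-$\theta$ cell problem $-\nabla\cdot(\widetilde{\mathcal A}\nabla U)=\nabla\cdot\widetilde{\mathcal C}$ with the Robin condition, the coefficients being built from $\mathcal U_0(\theta)$ as in (1.9). The extra constraint $\partial U/\partial\theta=0$ on $\Theta_{thr}$ I would read off from the fast relaxation forced by the strong, $1/\epsilon^2$-amplified diffusion, which is genuinely coercive on the threshold set $\Theta_{thr}=\{|\mathcal U|\ge U_{thr}\}$ where $g_a\ge G_{thr}$; testing against functions concentrated on $\Theta_{thr}$ and using that $\mathcal U_0$ is $x$-independent in (1.9) (so the transverse flux degenerates to higher order there) forces $U$ to be independent of the fast time on that set, together with the switch-off of all fast-time derivatives of $\mathcal U,\mathcal M$ off threshold provided by (\ref{eq40}).

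Finally I would identify the limit as unique. Each cell problem is well posed for the given $(t,x)$-data — the parabolic-in-$\theta$ problem by energy methods on the $\theta$-torus, the elliptic one by Lax--Milgram using coercivity on $\Theta_{thr}$ and the positive Robin term — so $U$ is independent of the extracted subsequence and the whole family two-scale converges. The main obstacle I anticipate is twofold: first, the degeneracy of $\widetilde{\mathcal A}$, which vanishes where $|\mathcal U|<U_{thr}$, so coercivity holds only on $\Theta_{thr}$ and one must lean on the faster vanishing of $\widetilde{\mathcal C}$ (through $g_c(0)=g'_c(0)=0$) to keep the flux terms under control; and second, carrying the Robin boundary condition through the two-scale limit, which requires a two-scale trace argument on $\partial\Omega$ compatible with both the degeneracy and the time oscillation. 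Extracting the constraint $\partial_\theta U=0$ on $\Theta_{thr}$ cleanly from the intermediate-order balance is the single most delicate point.
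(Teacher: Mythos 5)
Your overall skeleton --- oscillating test functions, multiplication of the weak formulation by $\epsilon$ (resp.\ $\epsilon^2$), passage to the two-scale limit, identification of the limit coefficients because $1-b\epsilon\gm\to 1$ --- is the same as the paper's. But two of your steps would fail as written, and both concern precisely the points where the degeneracy matters.

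First, your passage to the limit in the diffusion term rests on the claim that $\nabla z^\epsilon$ two-scale converges to $\nabla_x U$, so that $\int\!\!\int \mathcal A^\epsilon\nabla z^\epsilon\cdot\nabla v^\epsilon \to \int\!\!\int\!\!\int \widetilde{\mathcal A}\,\nabla_x U\cdot\nabla\varphi\,\psi$. That claim needs a uniform $L^2$ bound on $\nabla z^\epsilon$, which is not available here: Theorem \ref{th1.0} only gives an $L^\infty([0,T),L^2(\Omega))$ bound on $z^\epsilon$ itself, and the energy estimate controls only $\sqrt{\mathcal A^\epsilon}\,\nabla z^\epsilon$ (using $g_c\le g_a$ to bound $|\mathcal C^\epsilon|^2/\mathcal A^\epsilon$). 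Where $g_a(|\gu|)$ degenerates you therefore have no compactness for the gradients, and even granting that the flux $\mathcal A^\epsilon\nabla z^\epsilon$ is bounded in $L^2$, you have no way to identify its two-scale limit with $\widetilde{\mathcal A}\nabla_x U$. The paper avoids gradients of $z^\epsilon$ altogether: after multiplying by $\psi^\epsilon$ it integrates by parts \emph{twice} in $x$, so the term carried to the limit is $\int\!\!\int \nabla\cdot(\mathcal A^\epsilon\nabla\psi^\epsilon)\,z^\epsilon$, and only the two-scale convergence of $z^\epsilon$ (guaranteed by (\ref{eqborn})) is ever used; the limit identity (\ref{vie0}) keeps $\nabla\cdot(\widetilde{\mathcal A}\nabla\psi)$ on the test function, and Green's formula is applied only at the end, to the limit profile $U$. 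Without this device (or an equivalent one) your argument does not go through on the degeneracy set.

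Second, your derivation of the constraint $\partial U/\partial\theta=0$ is based on a misreading of $\Theta_{thr}$. In the paper, $\Theta_{thr}=\{(t,\theta,x):\widetilde{\mathcal A}(t,\theta,x)<\tilde G_{thr}\}$ is the \emph{sub-threshold} set, where the diffusion degenerates --- not, as you write, the set $\{|\mathcal U|\ge U_{thr}\}$ where $g_a\ge G_{thr}$. On the coercive set your claim is actually false: there $U$ is determined by the elliptic problem with $\theta$-dependent coefficients $\widetilde{\mathcal A}(\cdot,\theta,\cdot)$, $\widetilde{\mathcal C}(\cdot,\theta,\cdot)$, and in general it \emph{does} depend on $\theta$; no ``fast relaxation'' argument can give $\partial_\theta U=0$ there. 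The correct mechanism, and the paper's, is the opposite one: on the degenerate set, hypothesis (\ref{2.7}) (inherited from (\ref{eq40})) forces $\nabla\widetilde{\mathcal A}_\epsilon=0$ and $\nabla\cdot\widetilde{\mathcal C}_\epsilon=0$, so testing (\ref{long1}) with $\psi^\epsilon=\psi(t/\epsilon)$ depending only on $\theta$ and supported in $\Theta_{thr}$ makes the $1/\epsilon^2$ terms vanish identically ($\Delta\psi^\epsilon=0$), leaving $\int\!\!\int(\partial_\theta\psi)^\epsilon z^\epsilon\,dt\,dx=\epsilon\int z_0\psi\,dx$; its two-scale limit is $\int\!\!\int\!\!\int\partial_\theta\psi\,U=0$, i.e.\ $\partial_\theta U=0$ on $\Theta_{thr}$. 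The point is that the homogenized elliptic equation carries no information on the degenerate set, which is exactly why this extra condition is needed to close the limit problem --- and it is obtained from the hypothesis that all fast-time and space variations of the data switch off below threshold, not from coercivity.
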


\section{Existence and Estimates, proof of theorem \ref{th1.0}}
The objective of this paper is to study the sand transport models obtained in \cite{FaFreSe} and posed in a domain
$\Omega$ with boundary of class $\mathcal C^1$. They necessitate to impose a boundary condition in the domain.
Since the sand does not flow outside $\Omega,$ we showed in  \cite{Babou} (see especially (1.7)-(1.9) therein) that the natural boundary condition on $\partial\Omega$ is the Neumann one.
But we can also use 
a Dirichlet and mixed Neumann-Dirichlet condition. In our context, we shall consider a Robin boundary value problem. 
Consider 
\begin{equation}\label{0homoeq}
\mathcal A^\epsilon(t,x)=\widetilde{\mathcal A}_\epsilon(t,\frac t\epsilon,x),\,\,\mathcal C^\epsilon(t,x)=\widetilde{\mathcal C}_\epsilon(t,\frac t\epsilon,x)\end{equation}
where 
\begin{eqnarray}\label{homoeq}
\widetilde{\mathcal A}_\epsilon(t,\theta,x)= a(1-b\epsilon\mathcal M(t;\theta,x))g_a(|\mathcal U(t,\theta,x)|)
\\
\label{homoeq1}\widetilde{\mathcal C}_\epsilon(t,\theta,x)=c(1-b\epsilon\mathcal M(t,\theta,x))g_c(|\mathcal U(t,\theta,x)|)\frac{\mathcal U(t,\theta,x)}{|\mathcal U(t,\theta,x)|}
\end{eqnarray}
in short and long terms or
\begin{equation}\label{homoeq02}
\mathcal A^\epsilon(t,x)=\widetilde{\mathcal A}_\epsilon(t,\frac{t}{\sqrt{\epsilon}},\frac t\epsilon,x),\,\,\mathcal C^\epsilon(t,x)=\widetilde{\mathcal C}_\epsilon(t,\frac{t}{\sqrt{\epsilon}},\frac t\epsilon,x)\end{equation}
where 
\begin{eqnarray}\label{homoeq2}
\widetilde{\mathcal A}_\epsilon(t,\tau,\theta,x)= a(1-b\sqrt{\epsilon}\mathcal M(t,\tau,\theta,x))g_a(|\mathcal U(t,\tau,\theta,x)|)
\\
\label{homoeq3}\widetilde{\mathcal C}_\epsilon(t,\tau,\theta,x)=c(1-b\sqrt{\epsilon}\mathcal M(t,\tau,\theta,x))g_c(|\mathcal U(t,\tau,\theta,x)|)\frac{\mathcal U(t,\tau,\theta,x)}{|\mathcal U(t,\tau,\theta,x)|}
\end{eqnarray}
in mean term
where $\mathcal U$ and $\mathcal M$ are given in (\ref{epsmu1}) and (\ref{epsmu2}).\\ 
Under assumptions (\ref{eq40}) and (\ref{hyp1}), $\widetilde{\mathcal A}_\epsilon$ and $\widetilde{\mathcal C}_\epsilon$ satisfy the following hypotheses
\begin{equation}\label{hupp1}\left\{\begin{array}{ccc}
\tau\rightarrow (\widetilde{\mathcal A}_\epsilon,\widetilde{\mathcal C}_\epsilon)\,\,\text{is periodic of period}\,\, 1,\\
\theta\rightarrow (\widetilde{\mathcal A}_\epsilon;\widetilde{\mathcal C}_\epsilon)\,\,\text{is periodic of period}\,\,1\\
x\rightarrow (\widetilde{\mathcal A}_\epsilon,\widetilde{\mathcal C}_\epsilon)\,\,\text{is defined on } \,\,\Omega\\
|\widetilde{\mathcal A}_\epsilon|\leq \gamma,\,\, |\widetilde{\mathcal C}_\epsilon|\leq\gamma,\,\,|\frac{\partial\widetilde{\mathcal A}_\epsilon}{\partial \theta}|\leq \gamma,\,\, |\frac{\partial\widetilde{\mathcal C}_\epsilon}{\partial \theta}|\leq \gamma,\\
|\nabla\widetilde{\mathcal A}_\epsilon|\leq \epsilon^i\gamma,\,\,|\nabla\cdot\widetilde{\mathcal C}_\epsilon|\leq \epsilon^i\gamma,\\
\,\,|\frac{\partial\widetilde{\mathcal A}_\epsilon}{\partial t}|\leq \epsilon^{1+i}\gamma,\,\,|\frac{\partial\widetilde{\mathcal C}_\epsilon}{\partial t}|\leq \epsilon^{1+i}\gamma,\,\,|\frac{\partial\nabla \widetilde{\mathcal A}_\epsilon}{\partial t}|\leq\epsilon^{1+i} \gamma,\,\,|\frac{\partial\nabla\cdot\widetilde{\mathcal C}_\epsilon}{\partial t}|\leq \epsilon^{1+i} \gamma,
\end{array}\right.\end{equation}
where $i=0$ in the case of equation (\ref{short1}) and $i=1$ in the case of equation (\ref{long1}).\\
Equations (\ref{short1}) and (\ref{long1}) can be written in the generic form
\begin{equation}\label{shortlong1}\left\{\begin{array}{ccc}\frac{\partial z^\epsilon}{\partial t}-\frac{1}{\epsilon^i}\nabla\cdot\Big(\mathcal A^\epsilon\nabla z^\epsilon\Big)=\frac{1}{\epsilon^i}\nabla\cdot\mathcal C^\epsilon\,\,\text{in} \,\,[0,T)\times\Omega\\
z^\epsilon(0,x)=z_0(x)\,\,\text{in}\,\,\Omega,\\
\frac{\partial z^\epsilon}{\partial n}+z^\epsilon= g\,\,\text{on}\,\,[0,T)\times\partial \Omega
\end{array}\right.
\end{equation} for $i=1$ or $2.$ The case where $i=1$ corresponds to the valid model for short and mean term whereas
for $i=2$ we have the long term one. In the following, we are interested to the existence and uniqueness of solutions to 
(\ref{shortlong1}). This equation is a degenerated perturbed parabolic equation. The degeneracy of the coefficent 
$\mathcal A^\epsilon,$ makes that, the resolution of (\ref{shortlong1}) can not be done with the classical methods like Lax 
Milgram or Stampachia theorem. Therefore for the study of the problem, we will first prove, as in \cite{FaFreSe}, 
the existence of a periodic degenerated parabolic equation that we set out.
\\Before giving the proof of theorem \ref{th1.0}, we show that the coefficients of equation (\ref{shortlong1}), as well as their
derivatives, are bounded independently of $\epsilon.$  Existence for a given $\epsilon$ follows from results of Lions
\cite{Lions1},  Ladyzhenskaya et Solonnikov \cite{LadSol}. But, since our aim is to study the asymptotic behavior of $z^\epsilon$ 
as $\epsilon$ goes to  0, we need estimates which do not depend on $\epsilon.$ For this, based on the work of Faye et al \cite{FaFreSe},\,\cite{FaFrSe1} and \cite{Babou},
we are going to show the existence of  the following Robin boundary value problems:\\
$\forall \mu>0, \,\,\nu>0$ and $\epsilon>0,$ find  $\mathcal S^\nu=\mathcal S^\nu(t,\tau,\theta,x)$ and $\mathcal S^\nu_\mu=\mathcal S^\nu_\mu(t,\tau,\theta,x)$ periodic of period 1 in $\theta$ and solution to
\begin{equation}\label{eqreg1}\left\{\begin{array}{ccc}\frac{\partial\mathcal S^\nu}{\partial \theta}-\frac{1}{\epsilon^i}\nabla\cdot\Big(\big(\widetilde{\mathcal A}_\epsilon(t,\tau,\theta,\cdot)+\nu\big)\nabla \mathcal S^\nu\Big)=\\\frac{1}{\epsilon^i}\nabla \cdot\widetilde{\mathcal C}_\epsilon(t,\tau,\theta,\cdot) \,\,\text{in}\,\,(0,T)\times\mathbb R\times\mathbb R\times\Omega\\
 \mathcal S^\nu(0,0,0,x)=z_0(x)\,\,\text{in}\,\,\,\Omega\\
 \frac{\partial \mathcal S^\nu}{\partial n}+\mathcal S^\nu= g\,\,\,\,\text{on}\,\,(0,T)\times\mathbb R\times\mathbb R\times\partial\Omega
\end{array}\right.\end{equation}
and 
\begin{equation}\label{eqreg2}\left\{\begin{array}{ccc}\mu\mathcal S_\mu^\nu+\frac{\partial\mathcal S^\nu_\mu}{\partial \theta}-\frac{1}{\epsilon^i}\nabla\cdot\Big(\big(\widetilde{\mathcal A}_\epsilon(t,\tau,\theta,\cdot)+\nu\big)\nabla \mathcal S^\nu_\mu\Big)=\frac{1}{\epsilon^i}\nabla \cdot\widetilde{\mathcal C}_\epsilon(t,\tau,\theta,\cdot)\\ \,\,\text{in}\,\,(0,T)\times\mathbb R\times\mathbb R\times\Omega\\
 \mathcal S^\nu_\mu(0,0,0,x)=z_0(x)\,\,\text{in}\,\,\Omega\\
 \frac{\partial \mathcal S^\nu_\mu}{\partial n}+\mathcal S^\nu_\mu= g\,\,\text{on}\,\,(0,T)\times\mathbb R\times\mathbb R\times\partial\Omega,
\end{array}\right.\end{equation}
for $i=0$ or 1. 
In equations (\ref{eqreg1}) and (\ref{eqreg2}), $t$ is only a parameter. 
Let us focus on existence and uniqueness of $\mathcal S^\nu$ and $\mathcal S_\mu^\nu$ solutions to (\ref{eqreg1}) and (\ref{eqreg2}). But before proceeding further on, we point out the following remark.
\begin{remark}
We have to notice that, under assumptions (\ref{homoeq}), (\ref{eq40}) and (\ref{hyp1}), the coefficients $\widetilde{\mathcal A}_\epsilon,\,\,\widetilde{\mathcal C}_\epsilon$ and its derivatives are bounded on $\mathbb R^+\times\mathbb R\times\mathbb R\times\Omega$ by a constant 
$\epsilon^i\gamma$ where $\gamma$ not depending on $\epsilon$ for $i=1$ or 2.  
Moreover, for all $0\leq\epsilon<1,\,\,\theta\longmapsto (\widetilde{\mathcal A}_\epsilon,\,\,\widetilde{\mathcal C}_\epsilon)$ is periodic of period 1 and there exists a constant  
$\widetilde G_{thr}$ and $\theta_\alpha,\,\,\theta_\omega\in[0,1],\,\,\theta_\alpha<\theta_\omega$ such that 
\begin{equation}\label{2.6}
\widetilde{\mathcal A}_\epsilon(t,\tau,\theta,x) \geq \widetilde{G}_{thr}, \forall (t,\tau,\theta,x)\in \mathbb R^+\times\mathbb R\times\mathbb R\times\Omega\\
\end{equation}
and such that $\forall \,(t,\tau,\theta,x)\in\mathbb R^+\times\mathbb R\times\mathbb R\times\Omega$
\begin{equation}\label{2.7}
\widetilde{\mathcal{A}}_{\epsilon}(t,\tau,\theta,x)\leq\widetilde{G}_{thr}\Longrightarrow
\left\{\begin{array}{ccc}
\ds\frac{\partial\widetilde{\mathcal{A}}_{\epsilon}}{\partial t}(t,\tau,\theta,x)=0,\,\,\frac{\partial\widetilde{\mathcal{A}}_{\epsilon}}{\partial \tau}(t,\tau,\theta,x)=0,\\
\nabla\widetilde{\mathcal{A}}_{\epsilon}(t,\tau,\theta,x)=0, \vspace{3pt}
\ds\frac{\partial\widetilde{\mathcal{C}}_{\epsilon}}{\partial t}(t,\tau,\theta,x)=0,\\
\,\,\frac{\partial\widetilde{\mathcal{C}}_{\epsilon}}{\partial \tau}(t,\tau,\theta,x)=0,\,\nabla\cdot\widetilde{\mathcal{C}}_{\epsilon}(t,\tau,\theta,x)=0.\\
\end{array}\right.\end{equation}
\end{remark}
\noindent We have the following theorem:
\begin{theorem}\label{1th2} Under the same assumptions as in theorem\,\ref{th1.0} and under assumptions (\ref{hupp1}),(\ref{eq20}), (\ref{2.6}) and (\ref{2.7}),
$\forall \epsilon>0,\,\,\nu>0,\,\,\mu>0,$ there exists a unique $\mathcal S_\mu^\nu=\mathcal S_\mu^\nu(t,\tau,\theta,x)$ 1-periodic in $\theta,$ solution to (\ref{eqreg2}). Moreover there exists constants $\gamma_2,\,\,\gamma_3,\,\,\gamma_6$ which depends only on $\Omega,\,\,\gamma,\,\,\nu,\,\,\epsilon^i,\,\, g$ such that
\begin{equation}
\Big\|\mathcal S_\mu^\nu\Big\|_{L^2_\#(\mathbb R,H^1(\Omega))}\leq \tilde{\gamma},
\end{equation}
\begin{equation}
\Big\|\frac{\partial\mathcal S_\mu^\nu}{\partial\theta}\Big\|_{L_{\#}^2(\mathbb R,L^2(\Omega))}\leq \tilde{\tilde{\gamma}},
\end{equation}
\begin{equation}
 \Big\|\Delta\mathcal S^\nu_\mu\Big\|^2_{L^2_\#(\mathbb R,L^2(\Omega))}\leq \gamma_2,
\end{equation}
\begin{equation}\label{eqgradinf}
\Big\|\nabla \mathcal S_\mu^\nu\Big\|_{L_{\#}^\infty(\mathbb R,L^2(\Omega))}\leq  \gamma_3,
\end{equation}
\begin{equation}
\Big\|\mathcal S_\mu^\nu\Big\|_{L_{\#}^\infty(\mathbb R,L^2(\Omega))}\leq \gamma_3,
\end{equation}
\begin{equation}
 \Big\|\frac{\partial \mathcal S_\mu^\nu}{\partial t}\Big\|^2_{L^\infty_\#(\mathbb R,H^1(\Omega))}
\leq\gamma_6.
\end{equation}
\end{theorem}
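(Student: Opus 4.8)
The plan is to establish existence, uniqueness, and the stated a priori bounds for the regularized problem (\ref{eqreg2}) via a Galerkin-type approximation combined with energy estimates, treating $\theta$ as the evolution variable (since $t,\tau$ enter only as parameters). First I would set up the weak formulation: multiply the equation by a test function $\varphi\in H^1(\Omega)$, integrate over $\Omega$, and use the Robin condition $\frac{\partial\mathcal S^\nu_\mu}{\partial n}+\mathcal S^\nu_\mu=g$ to convert the boundary term arising from integration by parts of the diffusion operator. This produces the bilinear form
\begin{equation}
a(\mathcal S,\varphi)=\frac{1}{\epsilon^i}\int_\Omega(\widetilde{\mathcal A}_\epsilon+\nu)\nabla\mathcal S\cdot\nabla\varphi\,dx+\frac{1}{\epsilon^i}\int_{\partial\Omega}(\widetilde{\mathcal A}_\epsilon+\nu)\mathcal S\,\varphi\,ds,
\end{equation}
with the forcing coming from $\widetilde{\mathcal C}_\epsilon$ and $g$. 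The crucial point enabling coercivity here, and the reason the regularization $+\nu$ is introduced, is that even where $\widetilde{\mathcal A}_\epsilon$ degenerates, the added $\nu>0$ together with the lower bound (\ref{2.6}) guarantees $\widetilde{\mathcal A}_\epsilon+\nu\geq\nu>0$ uniformly, so $a(\cdot,\cdot)$ is coercive on $H^1(\Omega)$ (the boundary integral supplying control of the $L^2(\partial\Omega)$ trace, hence full $H^1$ coercivity via a Poincaré–trace inequality).

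Next I would construct solutions. Because the $\mu\mathcal S^\nu_\mu$ term makes the operator strictly monotone in $\theta$, I would use a Galerkin scheme in the spatial variable: pick a basis of $H^1(\Omega)$, project to obtain a finite system of ODEs in $\theta$ for the coefficients, and solve on one period $[0,1]$. Periodicity in $\theta$ is imposed via a fixed-point/shift argument — define the Poincaré return map sending the initial data at $\theta=0$ to its value at $\theta=1$; the dissipation from $\mu>0$ and the coercivity make this map a strict contraction in $L^2(\Omega)$, so Banach's fixed-point theorem yields a unique $1$-periodic trajectory. Uniqueness of $\mathcal S^\nu_\mu$ globally then follows from the same contraction estimate: the difference of two solutions satisfies the homogeneous problem and the energy identity forces it to vanish.

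The a priori estimates I would then derive successively by testing with increasingly strong multipliers, passing each bound to the Galerkin limit. Testing with $\mathcal S^\nu_\mu$ and integrating over one period in $\theta$ (the periodic boundary term $\int_0^1\frac{d}{d\theta}\frac12\|\mathcal S^\nu_\mu\|^2\,d\theta$ vanishes) gives the $L^2_\#(\mathbb R,H^1(\Omega))$ bound $\tilde\gamma$; the equation then reads off $\frac{\partial\mathcal S^\nu_\mu}{\partial\theta}$ in terms of quantities already controlled, yielding $\tilde{\tilde\gamma}$. For the Laplacian estimate $\gamma_2$ I would test with $-\Delta\mathcal S^\nu_\mu$ (or equivalently use elliptic regularity for the stationary operator at each $\theta$, invoking the $\mathcal C^1$ regularity of $\partial\Omega$ and the smoothness/boundedness of the coefficients from (\ref{hupp1})), and the $L^\infty_\#$ gradient bound $\gamma_3$ follows by combining these with the periodicity to upgrade the $L^2$-in-$\theta$ control to a uniform-in-$\theta$ one via a fundamental-theorem-of-calculus argument over a period. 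Finally, to bound $\frac{\partial\mathcal S^\nu_\mu}{\partial t}$, I would differentiate the equation in $t$ — legitimate because (\ref{hupp1}) gives the $t$-derivatives of the coefficients the extra $\epsilon^{1+i}$ smallness and boundedness — and run the same energy argument on $\frac{\partial\mathcal S^\nu_\mu}{\partial t}$, using the degeneracy structure (\ref{2.7}) to handle the regions where the coefficients vanish.

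I expect the main obstacle to be the degeneracy of $\widetilde{\mathcal A}_\epsilon$ in the estimates that go beyond the basic energy bound, namely $\gamma_2$, $\gamma_3$ and $\gamma_6$. The $+\nu$ saves coercivity but the constants $\gamma_2,\gamma_3,\gamma_6$ are allowed to depend on $\nu$, so the delicate part is keeping track of how the vanishing of $\widetilde{\mathcal A}_\epsilon$ interacts with the higher-order multipliers; here the hypothesis (\ref{2.7}), which forces the spatial and $t$-derivatives of the coefficients to vanish precisely where $\widetilde{\mathcal A}_\epsilon$ is small, is exactly what prevents uncontrolled terms and lets the differentiated-in-$t$ estimate close. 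Managing the Robin boundary integrals consistently through each of these steps — ensuring the trace terms retain the correct sign and are absorbed rather than generating boundary contributions that spoil the bounds — is the other technical point requiring care.
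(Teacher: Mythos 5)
Your proposal is correct, and on the a priori estimates --- which are the entire content of the paper's own proof --- it takes essentially the same route: testing with $\mathcal S^\nu_\mu$, integrating over one period in $\theta$, and using the Friedrichs/trace inequality to get the $L^2_\#(\mathbb R,H^1(\Omega))$ bound; testing with $-\Delta\mathcal S^\nu_\mu$ together with the weighted Young inequality (\ref{int1}) for $\gamma_2$; a fundamental-theorem-of-calculus argument over a period to upgrade to the $L^\infty_\#$ bounds; and differentiating the equation in $t$, with (\ref{hupp1}) and (\ref{2.7}) taming the differentiated coefficients, for $\gamma_6$. Where you genuinely differ is on existence and uniqueness: the paper never constructs $\mathcal S^\nu_\mu$ at all --- it declares the proof ``similar to'' Theorem 2.2 of \cite{Babou}, leans on the classical references \cite{Lions1}, \cite{LadSol}, and devotes its argument solely to tracking how the $\frac{1}{\epsilon^i}$ factors propagate through the estimates. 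Your Galerkin discretization in $x$ combined with a Poincar\'e return map in $\theta$ (a strict contraction in $L^2(\Omega)$, since for the difference $w$ of two solutions the energy identity gives $\frac{d}{d\theta}\|w\|_2^2\le-2\mu\|w\|_2^2$, hence a unique $1$-periodic trajectory by Banach's fixed point) supplies the missing construction and makes the statement self-contained; that is a genuine gain in completeness, at the cost of more machinery than the paper chose to display. Two points of order to fix when writing it up: (i) you propose to bound $\frac{\partial\mathcal S^\nu_\mu}{\partial\theta}$ by reading it off from the equation, which requires the Laplacian bound $\gamma_2$ beforehand, so either reorder those two steps or do as the paper does and test with $\frac{\partial\mathcal S^\nu_\mu}{\partial\theta}$ directly, using the identity $\int_\Omega(\widetilde{\mathcal A}_\epsilon+\nu)\nabla\mathcal S^\nu_\mu\cdot\nabla\frac{\partial\mathcal S^\nu_\mu}{\partial\theta}\,dx=\frac12\frac{d}{d\theta}\int_\Omega(\widetilde{\mathcal A}_\epsilon+\nu)|\nabla\mathcal S^\nu_\mu|^2dx-\frac12\int_\Omega\frac{\partial\widetilde{\mathcal A}_\epsilon}{\partial\theta}|\nabla\mathcal S^\nu_\mu|^2dx$; (ii) you never explicitly derive the fifth estimate, $\|\mathcal S^\nu_\mu\|_{L^\infty_\#(\mathbb R,L^2(\Omega))}\le\gamma_3$, though it follows from what you already have, either by the same over-a-period argument applied to the basic energy identity or, as in the paper, from a Poincar\'e-type inequality $\|\mathcal S^\nu_\mu(\theta,\cdot)\|_2\le\|\nabla\mathcal S^\nu_\mu(\theta,\cdot)\|_2$ combined with (\ref{eqgradinf}).
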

\begin{proof}
The proof of this theorem is similar to the one of Theorem\,2.2 of Thiam, Faye and Seck \cite{Babou}. The big difference is the presence of $\frac{1}{\epsilon^i}-$ factors in (\ref{eqreg2}). Hence we only sketch the most similar arguments and focus on the management of those $\frac{1}{\epsilon^i}-$factors.\\
Multiplying (\ref{eqreg2}) by  $\mathcal S_\mu^\nu$ and integrating over $\Omega,$ we get
\begin{equation}
 \mu\int_\Omega\Big|\mathcal S_\mu^\nu\Big|^2dx+\frac12\frac{d}{d\theta}\int_{\Omega}\Big|\mathcal S_\mu^\nu\Big|^2dx+\frac{1}{\epsilon^i}\int_\Omega(\widetilde{\mathcal A}_\epsilon+\nu)\Big|\nabla\mathcal S_\mu^\nu\Big|^2dx$$
 $$-\frac{1}{\epsilon^i}\int_{\partial \Omega}(\widetilde{\mathcal A}_\epsilon+\nu)\frac{\partial S_\mu^\nu}{\partial n}\mathcal S_\mu^\nu d\sigma=
  \frac{1}{\epsilon^i}\int_\Omega\nabla\cdot\widetilde{\mathcal C}_\epsilon\,\mathcal S_\mu^\nu dx.
\end{equation}
Using robin boundary condition and Green formula's in the left hand side, we get
\begin{equation}
 \mu\int_\Omega\Big|\mathcal S_\mu^\nu\Big|^2dx+\frac12\frac{d}{d\theta}\int_{\Omega}\Big|\mathcal S_\mu^\nu\Big|^2dx+
 \frac{1}{\epsilon^i}\int_\Omega(\widetilde{\mathcal A}_\epsilon+\nu)\Big|\nabla\mathcal S_\mu^\nu\Big|^2dx$$
 $$+\frac{1}{\epsilon^i}\int_{\partial \Omega}(\widetilde{\mathcal A}_\epsilon+\nu)\Big|\mathcal S_\mu^\nu\Big|^2
 d\sigma=\frac{1}{\epsilon^i}\int_{\partial \Omega}(\widetilde{\mathcal A}_\epsilon+\nu)g
 \mathcal S_\mu^\nu d\sigma$$$$-\frac{1}{\epsilon^i}\int_\Omega\widetilde{\mathcal C}_\epsilon\cdot\nabla\mathcal S_\mu^\nu dx
 +\frac{1}{\epsilon^i}\int_{\partial\Omega}\widetilde{\mathcal C}_\epsilon\cdot n\mathcal S_\mu^\nu d\sigma
\end{equation}
or
\begin{equation}
 \mu\int_\Omega\Big|\mathcal S_\mu^\nu\Big|^2dx+\frac12\frac{d}{d\theta}\int_{\Omega}\Big|\mathcal S_\mu^\nu\Big|^2dx+
 \frac{1}{\epsilon^i}\int_\Omega(\widetilde{\mathcal A}_\epsilon+\nu)\Big|\nabla\mathcal S_\mu^\nu\Big|^2dx
 +\frac{1}{\epsilon^i}\int_{\partial \Omega}(\widetilde{\mathcal A}_\epsilon+\nu)\Big|\mathcal S_\mu^\nu\Big|^2
 d\sigma$$$$=\frac{1}{\epsilon^i}\int_{\partial \Omega}(\widetilde{\mathcal A}_\epsilon+\nu)g+
\widetilde{\mathcal C}_\epsilon\cdot n\Big)\mathcal S_\mu^\nu d\sigma-
 \frac{1}{\epsilon^i}\int_\Omega\widetilde{\mathcal C}_\epsilon\cdot\nabla\mathcal S_\mu^\nu dx
 \end{equation}
 hence
\begin{equation}\label{eqprec2}\mu \Big\|\mathcal S_\mu^\nu\Big\|^2_2+\frac12\frac{d}{d\theta}
\Big(\Big\| \mathcal S_\mu^\nu\Big\|^2_2\Big) +\frac{1}{\epsilon^i}\int_\Omega(\widetilde{\mathcal A}_\epsilon+\nu)\Big|
\nabla\mathcal S_\mu^\nu\Big|^2dx+\frac{1}{\epsilon^i}\int_{\partial \Omega}(\widetilde{\mathcal A}_\epsilon+\nu)\Big|\mathcal S_\mu^\nu\Big|^2
 d\sigma
 $$$$\leq\frac{1}{\epsilon^i}\int_{\partial \Omega}(\widetilde{\mathcal A}_\epsilon+\nu)g+
\widetilde{\mathcal C}_\epsilon\cdot n\Big)\mathcal S_\mu^\nu d\sigma+
\frac{1}{\epsilon^i}\gamma|\Omega|\Big\| \nabla\mathcal S_\mu^\nu\Big\|_{L^2(\Omega)}.\end{equation}
Integrating (\ref{eqprec2}) over $\theta \in [0, 1]$, we get
$$ \mu \Big\|\mathcal S_\mu^\nu\Big\|^2_{L^2_\#(\mathbb R,L^2(\Omega))}+\frac{1}{\epsilon^i}\int_0^1 \int_\Omega
(\widetilde{\mathcal A}_\epsilon+\nu)\Big|\nabla\mathcal S_\mu^\nu\Big|^2dx\,d\theta+
\frac{1}{\epsilon^i}\int_0^1\int_{\partial \Omega}(\widetilde{\mathcal A}_\epsilon+\nu)\Big|\mathcal S_\mu^\nu\Big|^2
d\sigma\,d\theta
$$$$\leq \frac{1}{\epsilon^i}\int_0^1\int_{\partial \Omega}(\widetilde{\mathcal A}_\epsilon+\nu)g+
\widetilde{\mathcal C}_\epsilon\cdot n\Big)\mathcal S_\mu^\nu d\sigma\,d\theta+
\frac{1}{\epsilon^i}\gamma|\Omega
|\Big\|\nabla\mathcal S_\mu^\nu\Big\|_{L^2_\#(\mathbb R,L^2(\Omega))}.$$
From this last inequality, and thanks to $\widetilde{\mathcal A}_\epsilon+\nu\geq\nu$ and because of the positivity of the first term, we get
$$ \nu\int_0^1\Big( \int_\Omega\Big|\nabla\mathcal S_\mu^\nu\Big|^2dx+
\int_{\partial \Omega}\Big|\mathcal S_\mu^\nu\Big|^2
d\sigma\Big)\,d\theta
\leq \int_0^1\int_{\partial \Omega}(\widetilde{\mathcal A}_\epsilon+\nu)g+
\widetilde{\mathcal C}_\epsilon\cdot n\Big)\mathcal S_\mu^\nu d\sigma\,d\theta$$\begin{equation}\label{ref}+
\gamma|\Omega|\Big\|\nabla\mathcal S_\mu^\nu\Big\|_{L^2_\#(\mathbb R,L^2(\Omega))}.\end{equation}
Using Friedrichs's inequality, there exists $C$ dependent only on $\Omega$ such that
$$
\int_\Omega \Big|\mathcal S_\mu^\nu\Big|^2dx\leq C\Big(\int_\Omega\Big|\nabla\mathcal S_\mu^\nu\Big|^2dx+
\int_{\partial \Omega}\Big|\mathcal S_\mu^\nu\Big|^2
d\sigma)
$$
$$
\int_\Omega\Big|\nabla\mathcal S_\mu^\nu\Big|^2dx+\int_\Omega \Big|\mathcal S_\mu^\nu\Big|^2dx\leq \Big(C+1\Big)\int_\Omega\Big|\nabla\mathcal
S_\mu^\nu\Big|^2dx+C\,\int_{\partial \Omega}\Big|\mathcal S_\mu^\nu\Big|^2d\sigma$$
\begin{equation}\label{Friedrichs}
\frac{1}{C+1}\Big(\int_\Omega\Big|\nabla\mathcal S_\mu^\nu\Big|^2dx+\int_\Omega \Big|\mathcal S_\mu^\nu\Big|^2dx\Big)\leq \int_\Omega\Big|\nabla\mathcal
S_\mu^\nu\Big|^2dx+\int_{\partial \Omega}\Big|\mathcal S_\mu^\nu\Big|^2d\sigma\end{equation}                                                                              
In the second hand, the first integral of \ref{ref} can be upper bounded as follows 
\begin{equation}\label{trace}
\int_0^1\int_{\partial \Omega}(\widetilde{\mathcal A}_\epsilon+\nu)g+
\widetilde{\mathcal C}_\epsilon\cdot n\Big)\mathcal S_\mu^\nu d\sigma\,d\theta\leq C
\Big|\partial\Omega\Big|\Big((\gamma+\nu)\Big\|g\Big\|_{L^\infty}+\gamma\Big)\Big\|\mathcal S_\mu^\nu\Big\|_{L^2_\#(\mathbb R,H^1(\Omega))}
\end{equation}
Because of (\ref{Friedrichs}) and (\ref{trace}), the inequality (\ref{ref}) rewrites as
$$ \frac{\nu}{C+1}\int_0^1\Big( \int_\Omega\Big|\nabla\mathcal S_\mu^\nu\Big|^2dx+
\int_{\Omega}\Big|\mathcal S_\mu^\nu\Big|^2dx\Big)\,d\theta
\leq C
\Big|\partial\Omega\Big|\Big((\gamma+\nu)\Big\|g\Big\|_{L^\infty}+\gamma\Big)\Big\|\mathcal S_\mu^\nu\Big\|_{L^2_\#(\mathbb R,H^1(\Omega))}
$$\begin{equation}+
\gamma|\Omega|\Big\|\nabla\mathcal S_\mu^\nu\Big\|_{L^2_\#(\mathbb R,L^2(\Omega))}.\end{equation}
or
$$ \Big\|\mathcal S_\mu^\nu\Big\|^2_{L^2_\#(\mathbb R,H^1(\Omega))}
\leq \frac{C+1}{\nu}\Big(C
\Big|\partial\Omega\Big|\Big((\gamma+\nu)\Big\|g\Big\|_{L^\infty}+\gamma\Big)\Big\|\mathcal S_\mu^\nu\Big\|_{L^2_\#(\mathbb R,H^1(\Omega))}
$$\begin{equation}\label{eqgrad}+
\gamma|\Omega|\Big\|\nabla\mathcal S_\mu^\nu\Big\|_{L^2_\#(\mathbb R,L^2(\Omega))}\Big).\end{equation}
Hence, (\ref{eqgrad}) becomes 
$$ \Big\|\mathcal S_\mu^\nu\Big\|_{L^2_\#(\mathbb R,H^1(\Omega))}
\leq \frac{C+1}{\nu}\Big(C
\Big|\partial\Omega\Big|\Big((\gamma+\nu)\Big\|g\Big\|_{L^\infty}+\gamma\Big)
+
\gamma|\Omega|\Big).$$
and finally we have:
\begin{equation}\label{eqprec3}
\Big\|\mathcal S_\mu^\nu\Big\|_{L^2_\#(\mathbb R,H^1(\Omega))}\leq\tilde{\gamma}
\end{equation}
where $\tilde{\gamma}$ is a constant depend only on $C,\,\,\nu,\,\,\gamma,\,\,g,\,\,\Omega.$\\
Multiplying (\ref{eqreg2}) by $\frac{\partial \mathcal S_\mu^\nu}{\partial \theta}$ and integrating over $\Omega$,
we get:
$$\frac12\mu\frac{d}{d\theta}(\int_{\Omega}\Big|\mathcal S_\mu^\nu\Big|^2dx)+\int_\Omega\Big|\frac{\partial \mathcal S_\mu^\nu}{\partial \theta}\Big|^2dx
+\frac{1}{\epsilon^i}\int_\Omega (\widetilde{\mathcal A}_\epsilon+\nu)\nabla\mathcal S_\mu^\nu\cdot\nabla(\frac{\partial \mathcal S_\mu^\nu}{\partial \theta})dx$$
$$
-\frac{1}{\epsilon^i}\int_{\partial\Omega} (\widetilde{\mathcal A}_\epsilon+\nu)\frac{\partial \mathcal S_\mu^\nu}{\partial n}\frac{\partial \mathcal S_\mu^\nu}{\partial \theta}d\sigma=
\frac{1}{\epsilon^i}\int_\Omega\nabla\cdot\widetilde{\mathcal C}_\epsilon \frac{\partial \mathcal S_\mu^\nu}{\partial \theta}
dx,$$
Using robin boundary condition, we get
$$\frac12\mu\frac{d}{d\theta}(\int_{\Omega}\Big|\mathcal S_\mu^\nu\Big|^2dx)+\int_\Omega\Big|\frac{\partial \mathcal S_\mu^\nu}{\partial \theta}\Big|^2dx
+\frac{1}{\epsilon^i}\int_\Omega (\widetilde{\mathcal A}_\epsilon+\nu)\nabla\mathcal S_\mu^\nu\cdot\nabla(\frac{\partial \mathcal S_\mu^\nu}{\partial \theta})dx$$
$$\frac{1}{\epsilon^i}\int_{\partial\Omega} (\widetilde{\mathcal A}_\epsilon+\nu)\mathcal S_\mu^\nu\frac{\partial \mathcal S_\mu^\nu}{\partial \theta}d\sigma
-\frac{1}{\epsilon^i}\int_{\partial\Omega} (\widetilde{\mathcal A}_\epsilon+\nu)g\frac{\partial \mathcal S_\mu^\nu}{\partial \theta}d\sigma=
\frac{1}{\epsilon^i}\int_\Omega\nabla\cdot\widetilde{\mathcal C}_\epsilon \frac{\partial \mathcal S_\mu^\nu}{\partial \theta}
dx,$$
Using the facts that 
\begin{equation}            
\int_{\Omega}\big(\widetilde{\mathcal A}_\epsilon+\nu\big)\nabla\mathcal S_\mu^\nu\cdot\nabla \frac{\partial\mathcal S_\mu^\nu}{\partial\theta}dx=\frac12\frac{d\Big(\int_\Omega(\widetilde{\mathcal A}_\epsilon+\nu)|\nabla\mathcal S_\mu^\nu|^2dx\Big)}{d\theta}-\frac12\int_\Omega\frac{\partial\widetilde{\mathcal A}_\epsilon}{\partial\theta}|\nabla\mathcal S_\mu^\nu|^2dx,
\end{equation}
and
\begin{equation}            
\int_{\partial\Omega}\big(\widetilde{\mathcal A}_\epsilon+\nu\big)\mathcal S_\mu^\nu\frac{\partial\mathcal S_\mu^\nu}{\partial\theta}d\sigma=
\frac12\frac{d\Big(\int_{\partial\Omega}(\widetilde{\mathcal A}_\epsilon+\nu)|\mathcal S_\mu^\nu|^2d\sigma\Big)}{d\theta}-\frac12\int_{\partial\Omega}\frac{\partial\widetilde{\mathcal A}_\epsilon}{\partial\theta}|\mathcal S_\mu^\nu|^2d\sigma,
\end{equation}
we get 
$$\frac12\mu\frac{d}{d\theta}(\Big\|\mathcal S_\mu^\nu\Big\|^2_2)+\Big\|\frac{\partial \mathcal S_\mu^\nu}{\partial \theta}\Big\|^2_2+\frac{1}{\epsilon^i}\frac12\frac{d\Big(\int_\Omega(\widetilde{\mathcal A}_\epsilon+\nu)|\nabla\mathcal S_\mu^\nu|^2dx
+\int_{\partial\Omega}(\widetilde{\mathcal A}_\epsilon+\nu)|\mathcal S_\mu^\nu|^2d\sigma\Big)}{d\theta}=$$$$
\frac{1}{2\epsilon^i}\Big(\int_\Omega\frac{\partial\widetilde{\mathcal A}_\epsilon}{\partial\theta}|\nabla\mathcal S_\mu^\nu|^2dx
+\int_{\partial\Omega}\frac{\partial\widetilde{\mathcal A}_\epsilon}{\partial\theta}|\mathcal S_\mu^\nu|^2d\sigma\Big)
+\frac{1}{\epsilon^i}\int_{\partial\Omega} (\widetilde{\mathcal A}_\epsilon+\nu)g\frac{\partial \mathcal S_\mu^\nu}{\partial \theta}d\sigma$$$$+\frac{1}{\epsilon^i}\int_{\partial\Omega}\widetilde{\mathcal C}_\epsilon\cdot n\frac{\partial \mathcal S_\mu^\nu}{\partial \theta}d\sigma-\frac{1}{\epsilon^i}\int_{\Omega}\widetilde{\mathcal C}_\epsilon\cdot\nabla\Big(\frac{\partial \mathcal S_\mu^\nu}{\partial \theta}\Big)dx
$$
Integrating (\ref{renouv}) over $\theta\in [0, 1]$ and using 1-periodicity of $\Big\|\mathcal S_\mu^\nu\Big\|^2$ and $\int_\Omega(\widetilde{\mathcal A}_\epsilon+\nu)|\nabla\mathcal S_\mu^\nu|^2dx$ with respect to $\theta$ we have:
$$\Big\|\frac{\partial \mathcal S_\mu^\nu}{\partial \theta}\Big\|^2_{L^2_\#(\mathbb R,L^2(\Omega))}\leq
\frac{\gamma}{2\epsilon^i}\int_0^1\Big(\int_\Omega|\nabla\mathcal S_\mu^\nu|^2dx
+\int_{\partial\Omega}|\mathcal S_\mu^\nu|^2d\sigma\Big)d\theta$$$$
+\frac{1}{\epsilon^i}\int_0^1\int_{\partial\Omega}\Big(\frac{\partial\widetilde{\mathcal A}_\epsilon}{\partial \theta}g+
\frac{\partial\widetilde{\mathcal C}_\epsilon}{\partial \theta}\cdot n\Big)\mathcal S_\mu^\nu d\sigma d\theta+
\frac{1}{\epsilon^i}\int_0^1\int_{\Omega}\frac{\partial \widetilde{\mathcal C}_\epsilon}{\partial \theta}\cdot\nabla \mathcal S_\mu^\nu dx d\theta
$$
or 
$$\Big\|\frac{\partial \mathcal S_\mu^\nu}{\partial \theta}\Big\|^2_{L^2_\#(\mathbb R,L^2(\Omega))}\leq
\frac{\gamma}{2\epsilon^i}\int_0^1\Big(\int_\Omega|\nabla\mathcal S_\mu^\nu|^2dx
+\int_{\partial\Omega}|\mathcal S_\mu^\nu|^2d\sigma\Big)d\theta$$
\begin{equation}\label{ref1}
+\frac{\gamma}{\epsilon^i}\Big(C\Big|\partial\Omega\Big|\Big(\Big\|g\Big\|_{L^\infty}+1\Big)+\Big|\Omega\Big|\Big)
\Big\|\mathcal S_\mu^\nu\Big\|_{L^2_\#(\mathbb R,H^1(\Omega))}
\end{equation}
Because of (\ref{ref}) the first integral in the second hand of (\ref{ref1}) is bounded follows as
\begin{equation}
 \int_0^1\Big(\int_\Omega|\nabla\mathcal S_\mu^\nu|^2dx
+\int_{\partial\Omega}|\mathcal S_\mu^\nu|^2d\sigma\Big)d\theta\leq
\frac{\tilde{\gamma}}{C+1}\Big\|\mathcal S_\mu^\nu\Big\|_{L^2_\#(\mathbb R,H^1(\Omega))}
\end{equation}
and (\ref{ref1}) becomes
\begin{equation}\label{ref2}
\Big\|\frac{\partial \mathcal S_\mu^\nu}{\partial \theta}\Big\|^2_{L^2_\#(\mathbb R,L^2(\Omega))}\leq
\frac{\gamma}{2\epsilon^i}\Big[\frac{\tilde{\gamma}}{C+1}+
\Big(2C\Big|\partial\Omega\Big|\Big(\Big\|g\Big\|_{L^\infty}+1\Big)+2\Big|\Omega\Big|\Big)\Big]\tilde{\gamma}
\end{equation}
(\ref{ref2}) gives finally 
$$\Big\|\frac{\partial \mathcal S_\mu^\nu}{\partial \theta}\Big\|^2_{L^2_\#(\mathbb R,L^2(\Omega))}\leq \tilde{\tilde{\gamma}}.
$$
where $\tilde{\tilde{\gamma}}$ is a constant dependant on $\tilde{\gamma}$ \\
Multiplying the equation (\ref{eqreg2}) by $-\Delta\mathcal S_\mu^\nu$ and integrating over $\Omega$, we get:
$$
\mu \int_\Omega\Big|\nabla\mathcal S^\nu_\mu\Big|^2dx-\mu \int_{\partial \Omega}\frac{\partial \mathcal S^\nu_\mu}{\partial n}\mathcal S^\nu_\mu d\sigma+
\int_\Omega\nabla\mathcal S^\nu_\mu\cdot\nabla(\frac{\partial\mathcal S^\nu_\mu}{\partial \theta})dx-
\int_{\partial \Omega}\frac{\partial \mathcal S^\nu_\mu}{\partial \theta}\frac{\partial\mathcal S^\nu_\mu}{\partial n}d\sigma
$$$$
+\frac{1}{\epsilon^i}\int_\Omega\nabla\widetilde{\mathcal A}_\epsilon\cdot\nabla\mathcal S^\nu_\mu\Delta S^\nu_\mu dx
+
\frac{1}{\epsilon^i}\int_\Omega (\widetilde{\mathcal A}_\epsilon+\nu)\Big|\Delta \mathcal S^\nu_\mu\Big|^2dx=
-\frac{1}{\epsilon^i}\int_\Omega\nabla\cdot\widetilde{\mathcal C}_\epsilon\Delta \mathcal S^\nu_\mu dx.
$$
Or
$$
\mu \int_\Omega\Big|\nabla\mathcal S^\nu_\mu\Big|^2dx+
\int_\Omega\nabla\mathcal S^\nu_\mu\cdot\nabla(\frac{\partial\mathcal S^\nu_\mu}{\partial \theta})dx
+\frac{1}{\epsilon^i}\int_\Omega\nabla\widetilde{\mathcal A}_\epsilon\cdot\nabla\mathcal S^\nu_\mu\Delta S^\nu_\mu dx
$$
$$+\frac{1}{\epsilon^i}\int_\Omega (\widetilde{\mathcal A}_\epsilon+\nu)\Big|\Delta \mathcal S^\nu_\mu\Big|^2dx=
-\frac{1}{\epsilon^i}\int_\Omega\nabla\cdot\widetilde{\mathcal C}_\epsilon\Delta \mathcal S^\nu_\mu dx+\mu \int_{\partial
\Omega}g\mathcal S^\nu_\mu d\sigma
+\int_{\partial \Omega}\frac{\partial \mathcal S^\nu_\mu}{\partial \theta}g d\sigma
$$$$
-\mu \int_{\partial\Omega}\Big|\mathcal S^\nu_\mu\Big|^2 d\sigma
-\int_{\partial \Omega}\frac{\partial \mathcal S^\nu_\mu}{\partial \theta} \mathcal S^\nu_\mu d\sigma
$$
And we deduce that
$$
\mu \Big\|\nabla\mathcal S^\nu_\mu\Big\|^2_2+\frac12\frac{d}{d\theta}(\Big\|\nabla\mathcal S^\nu_\mu\Big\|^2_2)
+\frac{1}{\epsilon^i}\int_\Omega (\widetilde{\mathcal A}_\epsilon+\nu)\Big|\Delta( \mathcal S^\nu_\mu\Big|^2dx\leq
-\frac{1}{\epsilon^i}\int_\Omega\nabla\cdot\widetilde{\mathcal C}_\epsilon\Delta \mathcal S^\nu_\mu dx$$
$$-\frac{1}{\epsilon^i}\int_\Omega\nabla\widetilde{\mathcal A}_\epsilon\cdot\nabla\mathcal S^\nu_\mu\Delta S^\nu_\mu dx
+\mu\Big|\partial\Omega\Big|\Big\| g\Big\|_{L^\infty}\Big\|\mathcal S_\mu^\nu\Big\|_{L^2(\partial \Omega)}+
\frac{d }{d\theta}\Big(\int_{\partial \Omega}g\mathcal S_\mu^\nu d\sigma\Big)
$$$$
+\mu\Big\|\mathcal S_\mu^\nu\Big\|^2_{L^2(\partial \Omega)}+
\frac12\frac{d}{d\theta}(\Big\|\mathcal S^\nu_\mu\Big\|^2_{L^2(\partial \Omega)})
$$
Using the following relation :
\begin{equation}\label{int1}
\Big| UV\Big| \leq\frac{\widetilde{\mathcal A}_\epsilon+\nu}{4} U^2+\frac{1}{\widetilde{\mathcal A}_\epsilon+\nu}V^2,
\end{equation} for $U=\Delta \mathcal S^\nu_\mu\,, V=\nabla\cdot\widetilde{\mathcal C}_\epsilon$ for the first term of the right hand side and $V=\nabla\widetilde{\mathcal A}_\epsilon\cdot\nabla\mathcal S^\nu_\mu\,,U=\Delta S^\nu_\mu$ for the second term of the right hand side
we get:
\begin{equation}\label{eqdelta}
\mu \lVert\nabla\mathcal S^\nu_\mu\rVert^2_2+\frac12\frac{d}{d\theta}(\lVert\nabla\mathcal S^\nu_\mu\rVert^2_2)
+\frac{1}{\epsilon^i}\int_\Omega (\widetilde{\mathcal A}_\epsilon+\nu)\Big|\Delta \mathcal S^\nu_\mu\Big|^2dx\leq
\frac{1}{\epsilon^i}\int_\Omega\frac{(\widetilde{\mathcal A}_\epsilon+\nu)}{4}\Big|\Delta \mathcal S^\nu_\mu\Big|^2dx$$
$$+
\frac{1}{\epsilon^i}\int_\Omega\frac{\Big|\nabla\cdot \widetilde{\mathcal C}_\epsilon\Big|^2}{\widetilde{\mathcal A}_\epsilon+\nu}dx+
\frac{1}{\epsilon^i}\int_\Omega\frac{(\widetilde{\mathcal A}_\epsilon+\nu)}{4}\Big|\Delta \mathcal S^\nu_\mu\Big|^2dx
+
\frac{1}{\epsilon^i}\int_\Omega\frac{\Big| \nabla\widetilde{\mathcal A}_\epsilon\Big|^2}{\widetilde{\mathcal A}_\epsilon+\nu}\Big|\nabla\mathcal S^\nu_\mu\Big|^2dx$$
$$+
\mu C\Big(\Big|\partial\Omega\Big|\Big\| g\Big\|_{L^\infty}\Big\|\mathcal S_\mu^\nu\Big\|_{H^1(\Omega)}+
\Big\|\mathcal S_\mu^\nu\Big\|^2_{H^1(\Omega)}\Big)+
\frac{d }{d\theta}\Big(\int_{\partial \Omega}g\mathcal S_\mu^\nu d\sigma
+\frac12\Big\|\mathcal S^\nu_\mu\Big\|^2_{L^2(\partial \Omega)}\Big)
\end{equation}
From (\ref{eqdelta})\,\,
we get
\begin{equation}\label{eqdelta21}
\mu \Big\|\nabla\mathcal S^\nu_\mu\Big\|^2_2+\frac12\frac{d}{d\theta}(\Big\|\nabla\mathcal S^\nu_\mu\Big\|^2_2)
+\frac{1}{2\epsilon^i}\int_\Omega (\widetilde{\mathcal A}_\epsilon+\nu)\Big|\Delta \mathcal S^\nu_\mu\Big|^2dx\leq
\frac{1}{\epsilon^i}\int_\Omega\frac{\Big| \nabla\widetilde{\mathcal A}_\epsilon\Big|^2}{\widetilde{\mathcal A}_\epsilon+\nu}\Big|\nabla\mathcal S^\nu_\mu\Big|^2dx$$
$$+\frac{1}{\epsilon^i}
\int_\Omega\frac{\Big|\nabla\cdot \widetilde{\mathcal C}_\epsilon\Big|^2}{\widetilde{\mathcal A}_\epsilon+\nu}dx
+\mu C\Big(\Big|\partial\Omega\Big|\Big\| g\Big\|_{L^\infty}\Big\|\mathcal S_\mu^\nu\Big\|_{H^1(\Omega)}+
\Big\|\mathcal S_\mu^\nu\Big\|^2_{H^1(\Omega)}\Big)+$$$$
\frac{d }{d\theta}\Big(\int_{\partial \Omega}g\mathcal S_\mu^\nu d\sigma
+\frac12\Big\|\mathcal S^\nu_\mu\Big\|^2_{L^2(\partial \Omega)}\Big)
\end{equation}
We have to notice that, since $\theta\rightarrow \mathcal S_\mu^\nu$ is periodic of period 1, $\theta\rightarrow |\nabla\mathcal S_\mu^\nu|^2$ and $\int_{\partial\Omega}\mathcal S^\nu_\mu d\sigma$ are also periodic of period 1, then, integrating (\ref{eqdelta21}) with respect to $\theta \in [0, 1]$
and using the fact that the first term of (\ref{eqdelta21}) in the left hand side is positive, we get:
\begin{equation}\label{bbb}
 \frac12 \nu\Big\| \Delta\mathcal S^\nu_\mu\Big\|^2_{L^2_\#(\mathbb R,L^2(\Omega))}\leq
\frac{\gamma^2}{\nu}(\Big\|\mathcal S^\nu_\mu\Big\|^2_{L^2_\#(\mathbb R,H^1(\Omega))}+|\Omega|)$$
$$
+\epsilon^i\mu C\Big(\Big|\partial\Omega\Big|\Big\| g\Big\|_{L^\infty}\Big\|\mathcal S_\mu^\nu\Big\|_{L^2_\#(\mathbb R,H^1(\Omega))}+
\Big\|\mathcal S_\mu^\nu\Big\|^2_{L^2_\#(\mathbb R,H^1(\Omega))}\Big)
\end{equation}
Thus,
\begin{equation}
\Big\| \Delta\mathcal S^\nu_\mu\Big\|^2_{L^2_\#(\mathbb R,L^2(\Omega))}\leq
\frac{2\gamma^2}{\nu^2}(\tilde{\gamma}^2+|\Omega|)
+2\nu\epsilon^i\mu C\Big(\Big|\partial\Omega\Big|\Big\| g\Big\|_{L^\infty}\tilde{\gamma}+
\tilde{\gamma}^2\Big).
\end{equation}
and then 
\begin{equation}
\Big\| \Delta\mathcal S^\nu_\mu\Big\|^2_{L^2_\#(\mathbb R,L^2(\Omega))}\leq
\gamma_2.
\end{equation}
From (\ref{eqdelta21}) we can also deduce 
\begin{equation}\label{eqdelta210}
\frac12\frac{d}{d\theta}(\Big\|\nabla\mathcal S^\nu_\mu\Big\|^2_2)
\leq
\frac{1}{\epsilon^i}\int_\Omega\frac{\Big| \nabla\widetilde{\mathcal A}_\epsilon\Big|^2}{\widetilde{\mathcal A}_\epsilon+\nu}\Big|\nabla\mathcal S^\nu_\mu\Big|^2dx+
\frac{1}{\epsilon^i}\int_\Omega\frac{\Big|\nabla\cdot \widetilde{\mathcal C}_\epsilon\Big|^2}{\widetilde{\mathcal A}_\epsilon+\nu}dx$$
$$
\mu C\Big(\Big|\partial\Omega\Big|\Big\| g\Big\|_{L^\infty}\Big\|\mathcal S_\mu^\nu\Big\|_{H^1(\Omega)}+
\Big\|\mathcal S_\mu^\nu\Big\|^2_{H^1(\Omega)}\Big)+
\frac{d }{d\theta}\Big(\int_{\partial \Omega}g\mathcal S_\mu^\nu d\sigma
+\frac12\Big\|\mathcal S^\nu_\mu\Big\|^2_{L^2(\partial \Omega)}\Big)
\end{equation}
Integrating (\ref{eqdelta210}) over $(\theta_0, \theta_1)\in[0,1]^2$, we get
\begin{equation}
\frac12\int_{\theta_0}^{\theta_1}\frac{d}{d\theta}(\Big\|\nabla\mathcal S^\nu_\mu\Big\|^2_2)d\theta\leq \frac{1}{\epsilon^i}\int_{\theta_0}^{\theta_1}
\int_\Omega\frac{\Big| \nabla\widetilde{\mathcal A}_\epsilon\Big|^2}{\widetilde{\mathcal A}_\epsilon+\nu}\Big|\nabla\mathcal S^\nu_\mu\Big|^2dx\,d\theta+
\frac{1}{\epsilon^i}\int_{\theta_0}^{\theta_1}\int_\Omega\frac{\Big|\nabla\cdot \widetilde{\mathcal C}_\epsilon\Big|^2}{\widetilde{\mathcal A}_\epsilon+\nu}dx\,d\theta
$$
$$
+\mu C\Big(\Big|\partial\Omega\Big|\Big\| g\Big\|_{L^\infty}\int_{\theta_0}^{\theta_1}\Big\|\mathcal S_\mu^\nu\Big\|_{H^1(\Omega)}d\theta+
\int_{\theta_0}^{\theta_1}\Big\|\mathcal S_\mu^\nu\Big\|^2_{H^1(\Omega)}d\theta\Big)+$$$$
\int_{\theta_0}^{\theta_1}\frac{d }{d\theta}\Big(\int_{\partial \Omega}g\mathcal S_\mu^\nu d\sigma
+\frac12\Big\|\mathcal S^\nu_\mu\Big\|^2_{L^2(\partial \Omega)}\Big)d\theta
\end{equation}
From this last inequality, we have,
\begin{equation}\Big\|\nabla\mathcal S^\nu_\mu(\theta_1,\cdot)\Big\|^2_2-\Big\|\nabla\mathcal S^\nu_\mu(\theta_0,\cdot)
\Big\|^2_2\leq\frac{2\gamma^2}{\epsilon^i\nu}\int_{\theta_0}^{\theta_1}\Big(\int_\Omega\Big|\nabla\mathcal S^\nu_\mu
\Big|^2+|\Omega|\Big)d\theta+
$$
$$
+\mu C\Big(\Big|\partial\Omega\Big|\Big\| g\Big\|_{L^\infty}\int_{\theta_0}^{\theta_1}\Big\|\mathcal S_\mu^\nu\Big\|_{H^1(\Omega)}d\theta+
\int_{\theta_0}^{\theta_1}\Big\|\mathcal S_\mu^\nu\Big\|^2_{H^1(\Omega)}d\theta\Big)+$$$$
\int_{\theta_0}^{\theta_1}\frac{d }{d\theta}\Big(\int_{\partial \Omega}g\mathcal S_\mu^\nu d\sigma
+\frac12\Big\|\mathcal S^\nu_\mu\Big\|^2_{L^2(\partial \Omega)}\Big)d\theta
$$
$$\leq \frac{2\gamma^2}{\epsilon^i\nu}\Big(\Big\|\mathcal S_\mu^\nu\Big\|^2_{L^2_\#(\mathbb R, H^1(\Omega))}+
|\Omega|\Big)
+\mu C\Big(\Big|\partial\Omega\Big|\Big\| g\Big\|_{L^\infty}\Big\|\mathcal S_\mu^\nu\Big\|_{L^2_\#(\mathbb R, H^1(\Omega))}
$$$$+\Big\|\mathcal S_\mu^\nu\Big\|^2_{L^2_\#(\mathbb R, H^1(\Omega))}\Big)
\end{equation}
we get finally 
$$
\noindent\Big\|\nabla\mathcal S^\nu_\mu(\theta_1,\cdot)\Big\|^2_2\leq
\frac{2\gamma^2}{\epsilon^i\nu}\Big(\tilde{\gamma}^2+
|\Omega|\Big)
+\tilde{\gamma}\mu C\Big(\Big|\partial\Omega\Big|\Big\| g\Big\|_{L^\infty}+\tilde{\gamma}\Big)+\tilde{\gamma}^2
$$
So, taking the supremum for all $\theta_1\in [0,1],$ we get the following inequality
$$
\Big\|\nabla\mathcal S^\nu_\mu\Big\|_{L^\infty_\#(\mathbb R,L^2(\Omega))}\leq \gamma_3,
$$ giving (\ref{eqgradinf}).\\
~
At any $\theta\in\mathbb R,$ from the Fourier expansion of $\mathcal S_\mu^\nu(t,\theta,\cdot):$

\begin{equation}
\mathcal S_\mu^\nu(t,\theta,\cdot)=\sum_{k\in\mathbb N^2}\mathcal S_k(t,\theta)e^{ik\cdot x},
\end{equation} we get that 
\begin{equation}
\|\mathcal S_\mu^\nu(t,\theta,\cdot)\|_2^2\leq\|\nabla \mathcal S_\mu^\nu(t,\theta,\cdot)\|_2^2
\end{equation} for any $\theta\in\mathbb R.$ Finally we get 

\begin{equation}\|\mathcal S_\mu^\nu\|_{L^\infty_{\#}(\mathbb R,L^2(\Omega))}\leq \gamma_3.\end{equation}
Following the same idea as in \cite{FaFreSe},  $\frac{\partial \mathcal S_\mu^\nu}{\partial t}$ is solution to
\begin{equation}\label{eqreg6}
 \mu\frac{\partial \mathcal S_\mu^\nu}{\partial t}+
 \frac{\partial}{\partial \theta}(\frac{\partial \mathcal S_\mu^\nu}{\partial t})-
 \frac{1}{\epsilon^i}\nabla\cdot((\widetilde{\mathcal A}_\epsilon+\nu)\nabla(\frac{\partial \mathcal S_\mu^\nu}{\partial t}))
 =\frac{1}{\epsilon^i}\nabla\cdot\tilde{\mathcal C^\epsilon},\,\, i=0,1,
\end{equation}
where
\begin{equation}\label{b}
\tilde{\mathcal C}^\epsilon=\frac{\partial \widetilde{\mathcal C}_\epsilon}{\partial t}+\frac{\partial \widetilde{\mathcal A}_\epsilon}{\partial t}\nabla\mathcal S_\mu^\nu
\end{equation}
and
\begin{equation}\label{ba}
\nabla\cdot\tilde{\mathcal C^\epsilon}=\frac{\partial(\nabla\cdot\widetilde{\mathcal C}_\epsilon)}{\partial t}+
\frac{\partial (\nabla\widetilde{\mathcal A}_\epsilon)}{\partial t}\cdot\nabla S_\mu^\nu+
\frac{\partial \widetilde{\mathcal A}_\epsilon}{\partial t}\Delta S_\mu^\nu
\end{equation}
From relations (\ref{b}) and (\ref{ba}) and the above estimates, we deduce that their exits a constant
$\gamma_4$ depending only on $\nu$, $\tilde{\gamma},\,\,\Omega$ and $\gamma_2$ such that\\
 \begin{equation}\left\{\begin{array}{ccc}
\Big\| \tilde{\mathcal C^\epsilon} \Big\|_{L^2_\#(\mathbb R,L^2(\Omega))}\leq \epsilon^i \gamma_4  \\
\Big\| \nabla\cdot\tilde{\mathcal C^\epsilon} \Big\|_{L^2_\#(\mathbb R,L^2(\Omega))}\leq\epsilon^i \gamma_4,\,\,i=1,2
\end{array}\right.\end{equation}
with
$$
\gamma_4=\gamma\Big(1+\tilde{\gamma}+\sqrt{\gamma_2}\Big)
$$
Multiplying (\ref{eqreg6}) by $\frac{\partial S_\mu^\nu}{\partial t}$ and integrating over $\Omega,$ we have 

$$
\mu\int_\Omega \Big|\frac{\partial \mathcal S_\mu^\nu}{\partial t}\Big|^2+\frac12\frac{d}{d\theta}(\int_\Omega \big|\frac{\partial \mathcal S_\mu^\nu}{\partial t}\Big|^2dx)
+\frac{1}{\epsilon^i}\int_\Omega (\widetilde{\mathcal A}_\epsilon+\nu)\Big|\nabla(\frac{\partial \mathcal S_\mu^\nu}{\partial t})\Big|^2dx$$
$$-\frac{1}{\epsilon^i}
\int_{\partial\Omega} (\widetilde{\mathcal A}_\epsilon+\nu)\frac{\partial(\frac{\partial \mathcal S_\mu^\nu}{\partial t})}{\partial n}\frac{\partial \mathcal S_\mu^\nu}{\partial t}d\sigma=
\frac{1}{\epsilon^i}\int_\Omega \nabla\cdot\tilde{\mathcal C^\epsilon}\frac{\partial \mathcal S_\mu^\nu}{\partial t}dx.
$$
This is equivalent to
\begin{equation}\label{qqdr}
\mu\Big\|\frac{\partial \mathcal S_\mu^\nu}{\partial t}\Big\|^2_2+\frac12\frac{d}{d\theta}( \Big\|\frac{\partial \mathcal S_\mu^\nu}{\partial t}\Big\|^2_2)
+\frac{1}{\epsilon^i}\int_\Omega (\widetilde{\mathcal A}_\epsilon+\nu)\Big|\nabla(\frac{\partial \mathcal S_\mu^\nu}{\partial t})\Big|^2dx
+\frac{1}{\epsilon^i}\int_{\partial\Omega} (\widetilde{\mathcal A}_\epsilon+\nu)\Big|\frac{\partial \mathcal S_\mu^\nu}{\partial t}\Big|^2d\sigma
$$
$$=
\frac{1}{\epsilon^i}\int_{\partial\Omega} \Big((\widetilde{\mathcal A}_\epsilon+\nu)\frac{\partial g}{\partial t}+\widetilde{\mathcal C}^\epsilon\cdot n\Big)\frac{\partial \mathcal S_\mu^\nu}{\partial t}d\sigma
-\frac{1}{\epsilon^i}\int_\Omega \tilde{\mathcal C^\epsilon}\nabla\frac{\partial \mathcal S_\mu^\nu}{\partial t}dx
\end{equation}
Using (\ref{b}), we have 
\begin{equation}\label{gt1}
\int_{\partial\Omega} \Big((\widetilde{\mathcal A}_\epsilon+\nu)\frac{\partial g}{\partial t}+ \widetilde{\mathcal C}^\epsilon\cdot n\Big) \frac{\partial \mathcal S_\mu^\nu}{\partial t}d\sigma
\leq C\Big((\gamma+\nu) \Big\|\frac{\partial g}{\partial t}\Big\|_{L^\infty}+\gamma_4\Big)\Big|\partial\Omega\Big|
\Big\|\frac{\partial \mathcal S_\mu^\nu}{\partial t}\Big\|_{H^1(\Omega)}
\end{equation}
then integrating (\ref{qqdr}) from 0 to 1 with respect to $\theta,$ we have
\begin{equation}\label{grandcor}\mu\Big\|\frac{\partial \mathcal S_\mu^\nu}{\partial t}\Big\|^2_{L^2_\#(\mathbb R, L^2(\Omega))}
+\frac{1}{\epsilon^i}\int_0^1\int_\Omega\big(\widetilde{\mathcal A}_\epsilon+\nu\Big)\Big|\nabla\frac{\partial \mathcal S_\mu^\nu}{\partial t}\Big|^2dxd\theta
+\frac{1}{\epsilon^i}\int_0^1\int_{\partial\Omega}\big(\widetilde{\mathcal A}_\epsilon+\nu\Big)\Big|\frac{\partial \mathcal S_\mu^\nu}{\partial t}\Big|^2d\sigma d\theta
$$$$
\leq\frac{\gamma_4}{\epsilon^i}\int_0^1\Big\|\nabla\frac{\partial \mathcal S_\mu^\nu}{\partial t} \Big\|_{L^2(\Omega)}d\theta
+\frac{C}{\epsilon^i} \Big((\gamma+\nu) \Big\|\frac{\partial g}{\partial t}\Big\|_{L^\infty}+\gamma_4\Big)\Big|\partial\Omega\Big|
\Big\|\frac{\partial \mathcal S_\mu^\nu}{\partial t}\Big\|_{L^2_\#(\mathbb R, H^1(\Omega))}
\end{equation}
Using Friedrichs'inequality, (\ref{grandcor}) becomes
\begin{equation}
\frac{\nu}{C+1}\Big\|\frac{\partial \mathcal S_\mu^\nu}{\partial t}\Big\|^2_{L^2_\#(\mathbb R, H^1(\Omega))}
\leq\gamma_4\Big\|\frac{\partial \mathcal S_\mu^\nu}{\partial t} \Big\|_{L^2_\#(\mathbb R, H^1(\Omega))}
+$$$$C \Big((\gamma+\nu) \Big\|\frac{\partial g}{\partial t}\Big\|_{L^\infty}+\gamma_4\Big)\Big|\partial\Omega\Big|
\Big\|\frac{\partial \mathcal S_\mu^\nu}{\partial t}\Big\|_{L^2_\#(\mathbb R, H^1(\Omega))}
\end{equation}
Hence, we have
\begin{equation}\label{gamma4} 
\Big\|\frac{\partial \mathcal S_\mu^\nu}{\partial t}\Big\|_{L^2_\#(\mathbb R, H^1(\Omega))}
\leq\frac{C+1}{\nu}\Big[\gamma_4
+C \Big((\gamma+\nu) \Big\|\frac{\partial g}{\partial t}\Big\|_{L^\infty}+\gamma_4\Big)\Big|\partial\Omega\Big|\Big]
\end{equation}
From (\ref{gamma4}), there exists  $\theta_0\in[0, 1]$ such that
\begin{equation}\label{gamma3}
\Big\|\frac{\partial \mathcal S_\mu^\nu(\theta_0,\cdot)}{\partial t}\Big\|_{H^1(\Omega)}\leq \frac{\gamma_5}{\nu}.
\end{equation}
where $\gamma_5$ is a constant depending only on $\gamma_4,\,\,g,\,\,\Omega,\,\,C$\\
From (\ref{qqdr}), we get 
\begin{equation}\label{preced}
\frac12\frac{d}{d\theta}( \Big\|\frac{\partial \mathcal S_\mu^\nu}{\partial t}\Big\|^2_2\leq
\frac{\gamma_4}{\epsilon^i}\Big\|\nabla\frac{\partial \mathcal S_\mu^\nu}{\partial t}\Big\|_2
+\frac{C}{\epsilon_i}\Big((\gamma+\nu) \Big\|\frac{\partial g}{\partial t}\Big\|_{L^\infty}+\gamma_4\Big)\Big|\partial\Omega\Big|
\Big\|\frac{\partial \mathcal S_\mu^\nu}{\partial t}\Big\|_{H^1(\Omega)}.
\end{equation}
Integrating (\ref{preced}) from   $\theta_0$ given in (\ref{gamma3}) to $\theta_1 \in[0, 1]$, we obtain
\begin{equation}
\Big\|\frac{\partial \mathcal S_\mu^\nu(\theta_1, \cdot)}{\partial t}\Big\|^2_2\leq
\frac{2}{\epsilon_i}\Big(\gamma_4
+C\Big((\gamma+\nu) \Big\|\frac{\partial g}{\partial t}\Big\|_{L^\infty}+\gamma_4\Big)
\Big|\partial\Omega\Big|\Big)
\Big\|\frac{\partial \mathcal S_\mu^\nu}{\partial t}\Big\|_{L^2_\#(\mathbb R,H^1(\Omega))}$$$$+
\Big\|\frac{\partial \mathcal S_\mu^\nu(\theta_0,\cdot)}{\partial t}\Big\|^2_2.
\end{equation}
Then we have,
$$
\Big\|\frac{\partial \mathcal S_\mu^\nu}{\partial t}\Big\|^2_{L^\infty_\#(\mathbb R,L^2(\Omega))}
\leq\Big[
\frac{2}{\epsilon_i}\Big(\gamma_4
+C\Big((\gamma+\nu) \Big\|\frac{\partial g}{\partial t}\Big\|_{L^\infty}+\gamma_4\Big)
\Big|\partial\Omega\Big|\Big)+\frac{\gamma_5}{\nu}\Big]\frac{\gamma_5}{\nu}
$$
and then
$$
\Big\|\frac{\partial \mathcal S_\mu^\nu}{\partial t}\Big\|^2_{L^\infty_\#(\mathbb R,{L^2(\Omega)})}
\leq \gamma_6.
$$
\end{proof}
\noindent Because of theorem\,\ref{th2}, the sequence of solutions $(\mathcal S_\mu^\nu)_\mu$ to (\ref{eqreg2}) is bounded 
in $L^2_\#(\mathbb R,H^1(\Omega)),$ which is a reflexive space, then there exists a sub-sequence still denoted by  $(\mathcal S_\mu^\nu)_\mu$ and  a profile $(\mathcal S^\nu)$ such that
$(\mathcal S_\mu^\nu)_\mu\rightharpoonup \mathcal S^\nu\,\,\text{in}\,\,L^2_\#(\mathbb R,L^2(\Omega)),\,\,$\\$(\nabla\mathcal S_\mu^\nu)_\mu\rightharpoonup \nabla\mathcal S^\nu\,\,\text{in}\,\,L^2_\#(\mathbb R,L^2(\Omega))$ and $(\Delta \mathcal S_\mu^\nu)_\mu\rightharpoonup \Delta \mathcal S^\nu\,\,\text{in}\,\,L^2_\#(\mathbb R,L^2(\Omega)).$\\ 
Because of the $\theta$ dependance of 
$\mathcal S_\mu^\nu,$ we have also $\frac{\partial\mathcal S^\nu_\mu}{\partial \theta}\rightharpoonup \frac{\partial\mathcal S^\nu}{\partial \theta}\,\,\text{in}\,\,L^2_\#(\mathbb R,L^2(\Omega)).$\\
Then, multiplying (\ref{eqreg2}) by  a test function $v\in H^1(\Omega)$ and integrating over $\mathbb R\times\Omega$, we get
\begin{equation}
\mu\int_0^1\int_\Omega\mathcal S_\mu^\nu \,vdxd\theta+\int_0^1\int_\Omega\frac{\partial\mathcal S^\nu_\mu}{\partial \theta}\,vdxd\theta+
\frac{1}{\epsilon^i}\int_0^1\int_\Omega\big(\widetilde{\mathcal A}_\epsilon+\nu\big)\nabla \mathcal S^\nu_\mu\nabla vdxd\theta 
+$$$$\frac{1}{\epsilon^i}\int_0^1\int_{\partial\Omega}\big(\widetilde{\mathcal A}_\epsilon+\nu\big)\mathcal S^\nu_\mu vd\sigma d\theta
=\frac{1}{\epsilon^i}\int_0^1\int_\Omega\nabla \cdot\widetilde{\mathcal C}_\epsilon v \,dxd\theta+\frac{1}{\epsilon^i}\int_0^1\int_{\partial\Omega}\big(\widetilde{\mathcal A}_\epsilon+\nu\big)gvd\sigma d\theta.
\end{equation}
Passing to the limit as $\mu\rightarrow0$ we get
\begin{equation}\label{bord1}
\int_0^1\int_\Omega\frac{\partial\mathcal S^\nu}{\partial \theta}\,vdxd\theta+
\frac{1}{\epsilon^i}\int_0^1\int_{\partial\Omega}\big(\widetilde{\mathcal A}_\epsilon+\nu\big)\mathcal S^\nu vd\sigma d\theta+
\frac{1}{\epsilon^i}\int_0^1\int_\Omega\big(\widetilde{\mathcal A}_\epsilon+\nu\big)\nabla \mathcal S^\nu\nabla vdxd\theta=$$
$$\frac{1}{\epsilon^i}\int_0^1\int_\Omega\nabla \cdot\widetilde{\mathcal C}_\epsilon v \,dxd\theta
+\frac{1}{\epsilon^i}\int_0^1\int_{\partial\Omega}\big(\widetilde{\mathcal A}_\epsilon+\nu\big)gvd\sigma d\theta,\,\,\forall v\in H^1(\Omega).
\end{equation}
Taking $v\in H^1(\Omega)$ such that $v=0\,\,\text{on}\,\,\partial\Omega,$ we get
\begin{equation}\label{bord2}
\int_0^1\int_\Omega\frac{\partial\mathcal S^\nu}{\partial \theta}\,vdxd\theta+\frac{1}{\epsilon^i}\int_0^1\int_\Omega\big(\widetilde{\mathcal A}_\epsilon+\nu\big)\nabla \mathcal S^\nu\nabla vdxd\theta=\frac{1}{\epsilon^i}\int_0^1\int_\Omega\nabla \cdot\widetilde{\mathcal C}_\epsilon v \,dxd\theta.
\end{equation}
Using Green's formula, in the second term with respect to the variable $x$, we get, in the distributions sense,
\begin{equation}
\frac{\partial\mathcal S^\nu}{\partial \theta}-\frac{1}{\epsilon^i}\nabla\cdot\Big(\Big(\widetilde{\mathcal A}_\epsilon+\nu\big)\nabla\mathcal S^\nu\Big)=\frac{1}{\epsilon^i}\nabla\cdot\widetilde{\mathcal C}_\epsilon, \,\,i=0,1.
\end{equation}
Using again Green's formula in the second term of (\ref{bord1}) and using (\ref{bord2}), we get 
\begin{equation}
\forall \,\, v\in H^1(\Omega)$$$$\int_0^1\int_{\partial\Omega}\big(\widetilde{\mathcal A}_\epsilon+\nu\big)\frac{\partial \mathcal S^\nu}{\partial n}vd\sigma\,d\theta
+\int_0^1\int_{\partial\Omega}\big(\widetilde{\mathcal A}_\epsilon+\nu\big)\mathcal S^\nu vd\sigma d\theta
=\int_0^1\int_{\partial\Omega}\big(\widetilde{\mathcal A}_\epsilon+\nu\big)g v\,d\sigma d\theta
\end{equation} giving
\begin{equation}\frac{\partial \mathcal S^\nu}{\partial n}+
 \mathcal S^\nu
=g\,\,\text{on}\,\,\mathbb R\times\partial\Omega.
\end{equation}

\begin{theorem} Under the same assumptions as in theorem \ref{1th2},
$\forall \epsilon>0,\,\,\nu>0,\,\,$ there exists a unique $\mathcal S^\nu=\mathcal S^\nu(t,\theta,x)$ periodic of period 1 solution to (\ref{eqreg1}). Moreover the following inequalities holds
\begin{equation}
\Big\|\mathcal S^\nu\Big\|_{L^2_\#(\mathbb R,H^1(\Omega))}\leq \gamma_7,
\end{equation}
\begin{equation}
\Big\|\frac{\partial\mathcal S^\nu}{\partial\theta}\Big\|^2_{L_{\#}^2(\mathbb R,L^2(\Omega))}\leq \tilde{\tilde{\tilde\gamma}}
\end{equation}
\begin{equation}\label{gradnuinf}
\Big\|\Delta \mathcal S^\nu\Big\|^2_{L_{\#}^2(\mathbb R,L^2(\Omega))}\leq\frac{\gamma^2}{\tilde{G}_{thr}^2}
(\gamma_7^2+1),
\end{equation}
\begin{equation}\label{bib}
 \Big\|\frac{\partial \mathcal S^\nu(\theta,\cdot)}{\partial t}\Big\|^2_{L^2_\#(\mathbb R,H^1(\Omega))}
\leq\tilde{\gamma}_8.
\end{equation}
\end{theorem}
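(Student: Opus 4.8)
The plan is to obtain $\mathcal S^\nu$ as the weak limit, as $\mu\to 0$, of the regularized solutions $\mathcal S^\nu_\mu$ provided by Theorem \ref{1th2}, and then to sharpen the $\mu$-uniform estimates into the $\nu$-tracking bounds announced here. First I would note that the constants $\tilde\gamma,\,\tilde{\tilde\gamma},\,\gamma_2,\,\gamma_3,\,\gamma_6$ of Theorem \ref{1th2} stay bounded as $\mu\to 0$, since $\mu$ enters them only through factors that are controlled for $0<\mu\le 1$; consequently $(\mathcal S^\nu_\mu)_\mu$ is bounded in $L^2_\#(\R,H^1(\Omega))$, and $(\partial_\theta\mathcal S^\nu_\mu)_\mu,\,(\Delta\mathcal S^\nu_\mu)_\mu$ in $L^2_\#(\R,L^2(\Omega))$. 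By reflexivity one extracts a subsequence converging weakly, in each of these spaces, to a limit $\mathcal S^\nu$, exactly as in the passage preceding the statement. The only term of the variational formulation that disappears in the limit is $\mu\int_0^1\int_\Omega \mathcal S^\nu_\mu\,v\,dx\,d\theta=O(\mu)$, the remaining terms converging by weak convergence; Green's formula then recovers both the interior equation (\ref{eqreg1}) and the Robin condition, as already established in (\ref{bord1})--(\ref{bord2}).

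For uniqueness I would test the equation satisfied by the difference $w$ of two solutions against $w$ itself, integrate over $\Omega$ and then over one period $\theta\in[0,1]$. The $\partial_\theta$ term vanishes by $1$-periodicity, the homogeneous Robin condition converts the boundary integral into the nonnegative quantity $\frac1{\epsilon^i}\int_0^1\int_{\partial\Omega}(\widetilde{\mathcal A}_\epsilon+\nu)|w|^2\,d\sigma\,d\theta$, and coercivity $\widetilde{\mathcal A}_\epsilon+\nu\ge\nu>0$ together with Friedrichs' inequality forces $w\equiv 0$.

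The $H^1$ bound $\|\mathcal S^\nu\|_{L^2_\#(\R,H^1(\Omega))}\le\gamma_7$ and the $\partial_\theta$ bound then follow immediately from weak lower semicontinuity, inherited from the $\mu$-uniform versions (\ref{eqprec3}) and its $\partial_\theta$ analogue. The crucial new point is the Laplacian estimate (\ref{gradnuinf}): here I would \emph{not} pass to the limit, which would only reproduce the $\nu^{-2}$-type constant of (\ref{bbb}), but re-run the $-\Delta\mathcal S^\nu$ multiplier computation directly on (\ref{eqreg1}). After integrating over $[0,1]$ and dropping the periodic $\frac{d}{d\theta}$ term, the left-hand side carries $\frac1{\epsilon^i}\int(\widetilde{\mathcal A}_\epsilon+\nu)|\Delta\mathcal S^\nu|^2$, bounded below using $\widetilde{\mathcal A}_\epsilon+\nu\ge\widetilde G_{thr}$ from (\ref{2.6}); on the right, the delicate terms $\frac{|\nabla\widetilde{\mathcal A}_\epsilon|^2}{\widetilde{\mathcal A}_\epsilon+\nu}$ and $\frac{|\nabla\cdot\widetilde{\mathcal C}_\epsilon|^2}{\widetilde{\mathcal A}_\epsilon+\nu}$ are controlled through the degeneracy hypothesis (\ref{2.7}): where $\widetilde{\mathcal A}_\epsilon\le\widetilde G_{thr}$ the numerators vanish, and elsewhere the denominator exceeds $\widetilde G_{thr}$, so that with $|\nabla\widetilde{\mathcal A}_\epsilon|,\,|\nabla\cdot\widetilde{\mathcal C}_\epsilon|\le\epsilon^i\gamma$ from (\ref{hupp1}) one obtains a constant of order $\gamma^2/\widetilde G_{thr}^2$ that is independent of $\nu$, namely $\frac{\gamma^2}{\widetilde G_{thr}^2}(\gamma_7^2+1)$.

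Finally, for (\ref{bib}) I would differentiate (\ref{eqreg1}) in the parameter $t$ and note that $\partial_t\mathcal S^\nu$ solves the same type of problem with source $\partial_t\widetilde{\mathcal C}_\epsilon+\partial_t\widetilde{\mathcal A}_\epsilon\nabla\mathcal S^\nu$; by (\ref{hupp1}) these $t$-derivatives gain a factor $\epsilon^{1+i}$, and by (\ref{2.7}) they vanish on the degeneracy set. Repeating the $H^1$ energy estimate of the first paragraph for $\partial_t\mathcal S^\nu$, again with the coercivity floor $\widetilde G_{thr}$ in place of $\nu$, yields the bound $\tilde\gamma_8$. The hard part is the Laplacian estimate: it is the single place where the crude $\mu$-limit is insufficient and where the degeneracy structure (\ref{2.7}) must be used to trade the vanishing factor $\nu$ for the fixed floor $\widetilde G_{thr}$, thereby producing bounds uniform in $\nu$ as required for the subsequent passage $\nu\to 0$.
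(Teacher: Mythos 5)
Your architecture coincides with the paper's at almost every step: existence by passing to the limit $\mu\to 0$ in (\ref{eqreg2}) (which the paper carries out in the passage preceding the theorem), uniqueness by the energy argument on the difference of two solutions, and, for (\ref{gradnuinf}) and (\ref{bib}), direct multiplier computations on (\ref{eqreg1}) in which the coercivity floor $\widetilde{\mathcal A}_\epsilon+\nu\geq \tilde G_{thr}$ of (\ref{2.6}) replaces $\nu$. The genuine gap is in your third paragraph: you derive the $H^1$ bound and the $\partial_\theta$ bound ``by weak lower semicontinuity, inherited from the $\mu$-uniform versions (\ref{eqprec3})''. But the constant in (\ref{eqprec3}) is $\tilde\gamma=\frac{C+1}{\nu}\,(C|\partial\Omega|((\gamma+\nu)\|g\|_{L^\infty}+\gamma)+\gamma|\Omega|)$, and the constant $\tilde{\tilde\gamma}$ of Theorem \ref{1th2} is itself built from $\tilde\gamma$; both blow up as $\nu\to 0$. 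The constants $\gamma_7$ and $\tilde{\tilde{\tilde\gamma}}$ asserted in the present theorem are of a different nature: the sentence preceding the paper's proof states that the whole point is to bound $\mathcal S^\nu$ and its derivatives by constants that remain finite as $\nu$ tends to $0$, and Theorem \ref{biba2} indeed lets $\nu\to 0$ in these inequalities to obtain (\ref{S2}) and (\ref{S3}). Weak lower semicontinuity from the $\mu$-level estimates therefore cannot yield the stated inequalities; it yields strictly weaker, $\nu$-degenerate ones.

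The defect then propagates into the two estimates you do treat correctly in spirit: your bounds for (\ref{gradnuinf}) and (\ref{bib}) carry $\gamma_7$ on their right-hand sides, so if your $\gamma_7$ is really $\tilde\gamma$ they are of order $\nu^{-2}$ as well, which defeats the purpose of the theorem even though you correctly identified the $\tilde G_{thr}$ mechanism there. The repair is exactly the device you reserve for the Laplacian, applied one step earlier, and it is what the paper's proof does: multiply (\ref{eqreg1}) by $\mathcal S^\nu$, use Green's formula and the Robin condition to reach (\ref{babou00}), integrate over one period in $\theta$ so that the $\frac{d}{d\theta}$ term disappears, bound $\widetilde{\mathcal A}_\epsilon+\nu$ from below by $\widetilde{\mathcal A}_\epsilon\geq\tilde G_{thr}$ rather than by $\nu$, and conclude with Friedrichs' inequality; this is the chain (\ref{babou001})--(\ref{gamma7}) and gives $\gamma_7=\frac{C+1}{\sqrt{\tilde G_{thr}}}[C|\partial\Omega|((\gamma+\nu)\|g\|_{L^\infty}+\gamma)+\gamma|\Omega|]$, which stays bounded as $\nu\to0$. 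The same multiplier $\frac{\partial\mathcal S^\nu}{\partial\theta}$ computation, run directly on (\ref{eqreg1}) rather than inherited from (\ref{eqreg2}), gives the second estimate in terms of $\gamma_7$. With these two corrections, your treatment of (\ref{gradnuinf}) and (\ref{bib}) (including your use of (\ref{2.7}) to handle the degenerate set, which is if anything more careful than the paper's appeal to (\ref{2.6}) alone) goes through as written.
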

The idea in this part is to bound $\mathcal S^\nu$ and its derivative in a appropriate space by finite constants if 
$\nu$ tends to 0.\\
\begin{proof} 
To show that solution (\ref{eqreg1}) is unique, let $\mathcal S^\nu$ and
$\bar{\mathcal S^\nu}$ be two solutions of  (\ref{eqreg1}),  then $\mathcal S^\nu-\bar{\mathcal S^\nu}$ is also solution to:
%
\begin{equation}\label{babou}\left\{\begin{array}{ccc}
\frac{\partial(\mathcal S^\nu-\bar{\mathcal S^\nu})}{\partial \theta}-
\frac{1}{\epsilon^i}\nabla\cdot\Big(\big(\widetilde{\mathcal A}_\epsilon+\nu\big)\nabla (\mathcal S^\nu-\bar{\mathcal S^\nu})\Big)=0 \,\,\text{in}\,\,]0,T]\times\mathbb R\times\Omega
\\
\mathcal S^\nu(0,0,x)-\bar{\mathcal S^\nu}(0,0,x)=0\,\,\text{in}\,\,\,\Omega\\
\frac{\partial (\mathcal S^\nu-\bar{\mathcal S^\nu})}{\partial n}
+\mathcal S^\nu-\bar{\mathcal S^\nu}
= 0\,\,\text{on}\,\,\,]0,T]\times\mathbb R\times\partial\Omega.
\end{array}\right.
\end{equation}
Multiplying (\ref{babou}) by $\mathcal S^\nu-\bar{\mathcal S^\nu}$ and integrating over $\Omega$,
we get:
%
%
$$\frac12\frac{d}{d\theta}\int_{\Omega}\Big|(\mathcal S^\nu-\bar{\mathcal S^\nu})\Big|^2dx+
\frac{1}{\epsilon^i}\int_\Omega(\widetilde{\mathcal A}_\epsilon+\nu)\Big|\nabla(\mathcal S^\nu-\bar{\mathcal S^\nu})\Big|^2dx$$
$$-
\frac{1}{\epsilon^i}\int_{\Omega}(\widetilde{\mathcal A}_\epsilon+\nu)\frac{\partial (\mathcal S^\nu-\bar{\mathcal S^\nu})}{\partial n}(\mathcal S^\nu-\bar{\mathcal S^\nu}) d\sigma
=0.$$
Since  $\frac{\partial (\mathcal S^\nu-\bar{\mathcal S^\nu})}{\partial n}= 0\quad\text{on}\,\,\,]0,T]\times\mathbb R\times\partial\Omega$ there is no boundary term, hence we get
\begin{equation}\label{babou01}\frac12\frac{d}{d\theta}\Big(\Big\|(\mathcal S^\nu-\bar{\mathcal S^\nu})\Big\|^2_2\Big)+
\frac{\nu}{\epsilon^i}\Big\|\nabla(\mathcal S^\nu-\bar{\mathcal S^\nu})\Big\|^2_2=0
\end{equation}
which gives, as the second term of (\ref{babou01}) is positive
\begin{equation}\label{babou2}
\frac12\frac{d}{d\theta}\Big(\Big\|(\mathcal S^\nu-\bar{\mathcal S^\nu})\Big\|^2_2\Big)\leq0\end{equation}
Integrating (\ref{babou2}) from 0 to 1 and using periodicity of $\mathcal S^\nu $ we get from (\ref{babou01}) the following inequality 
$$
\nu\int_0^1\Big\|\nabla(\mathcal S^\nu-\bar{\mathcal S^\nu})\Big\|^2_2d\theta\leq0.
$$
Thus, 
$$
\nu\Big\|\nabla(\mathcal S^\nu-\bar{\mathcal S^\nu})\Big\|^2_{L^2_\#(\mathbb R,L^2(\Omega))}=0.
$$
$\forall t\in[0,T),\,\,\forall \theta\in\mathbb R,\,\,\text{then}\,\,\mathcal S^\nu(\theta)-\bar{\mathcal S}^\nu(\theta)\in H_0^1(\Omega)$
then using Poincare's inequality, there exists a constant $c(\Omega)$ such that
$$
 \mathcal S^\nu-\bar{\mathcal S^\nu} \in H^1_0(\Omega), \quad
\Big\|\mathcal S^\nu-\bar{\mathcal S^\nu}\Big\|^2_{L^2_\#(\mathbb R,L^2(\Omega))}\leq
c(\Omega)\Big\|\nabla(\mathcal S^\nu-\bar{\mathcal S^\nu})\Big\|^2_{L^2_\#(\mathbb R,L^2(\Omega))}
$$
Thus, we have
$$
\Big\|\mathcal S^\nu-\bar{\mathcal S^\nu}\Big\|^2_{L^2_\#(\mathbb R,L^2(\Omega))}=0,
$$
giving
$$
 S^\nu=\bar{\mathcal S^\nu}.
$$
Multiplying the equation (\ref{eqreg1}) by  $\mathcal S^\nu$ and integrating over $\Omega,$ we get
\begin{equation}
\frac12\frac{d}{d\theta}\int_{\Omega}\Big|\mathcal S^\nu\Big|^2dx+\frac{1}{\epsilon^i}\int_\Omega(\widetilde{\mathcal A}_\epsilon+\nu)\Big|\nabla\mathcal S^\nu\Big|^2dx-\frac{1}{\epsilon^i}\int_{\partial \Omega}(\widetilde{\mathcal A}_\epsilon+\nu)\frac{\partial\mathcal S^\nu}{\partial n}\mathcal S^\nu d\sigma=$$
$$ \frac{1}{\epsilon^i}\int_\Omega\nabla\cdot\widetilde{\mathcal C}_\epsilon\,\mathcal S^\nu dx.
\end{equation}
Using the Green formula in the right hand side of the following equality we get
\begin{equation}\label{2.73}
\frac12\frac{d}{d\theta}\int_{\Omega}\Big|\mathcal S^\nu\Big|^2dx+\frac{1}{\epsilon^i}\int_\Omega(\widetilde{\mathcal A}_\epsilon+\nu)\Big|\nabla\mathcal S^\nu\Big|^2dx-\frac{1}{\epsilon^i}\int_{\partial \Omega}(\widetilde{\mathcal A}_\epsilon+\nu)\frac{\partial \mathcal S^\nu}{\partial n}\mathcal S^\nu d\sigma= $$
$$-\frac{1}{\epsilon^i}\int_\Omega\widetilde{\mathcal C}_\epsilon\cdot\nabla\mathcal S^\nu dx+\frac{1}{\epsilon^i}\int_{\partial\Omega}\widetilde{\mathcal C}_\epsilon\cdot n\,\mathcal S^\nu\, d\sigma.
\end{equation}
Using the robin boundary condition, we have 
\begin{equation}\label{babou00}
\frac12\frac{d}{d\theta}\int_{\Omega}\Big|\mathcal S^\nu\Big|^2dx+
\frac{1}{\epsilon^i}\int_{\partial\Omega}(\widetilde{\mathcal A}_\epsilon+\nu)\Big|\mathcal S^\nu\Big|^2d\sigma
+\frac{1}{\epsilon^i}\int_\Omega(\widetilde{\mathcal A}_\epsilon+\nu)\Big|\nabla\mathcal S^\nu\Big|^2dx=$$
$$\frac{1}{\epsilon^i}\int_{\partial \Omega}\Big((\widetilde{\mathcal A}_\epsilon+\nu)g+ \widetilde{\mathcal C}_\epsilon\cdot n\Big)\mathcal S^\nu \,d\sigma -\frac{1}{\epsilon^i}\int_\Omega\widetilde{\mathcal C}_\epsilon\cdot\nabla\mathcal S^\nu dx
\end{equation}
Then, integrating (\ref{babou00}) from 0 to 1 and using periodicity of $\mathcal S^\nu$  we get
\begin{equation}\label{babou001}
\frac{1}{\epsilon^i}\int_0^1\int_{\partial\Omega}(\widetilde{\mathcal A}_\epsilon+\nu)\Big|\mathcal S^\nu\Big|^2d\sigma
+\frac{1}{\epsilon^i}\int_0^1\int_\Omega(\widetilde{\mathcal A}_\epsilon+\nu)\Big|\nabla\mathcal S^\nu\Big|^2dx=$$
$$\frac{1}{\epsilon^i}\int_0^1\int_{\partial \Omega}\Big((\widetilde{\mathcal A}_\epsilon+\nu)g+ 
\widetilde{\mathcal C}_\epsilon\cdot n\Big)\mathcal S^\nu \,d\sigma -\frac{1}{\epsilon^i}
\int_0^1\int_\Omega\widetilde{\mathcal C}_\epsilon\cdot\nabla\mathcal S^\nu dx
\end{equation}
From (\ref{babou001}), since $\widetilde{\mathcal A}_\epsilon+\nu\geq\widetilde{\mathcal A}_\epsilon$ we deduce
\begin{equation}\label{babou002}
\int_0^1\int_{\partial\Omega}\widetilde{\mathcal A}_\epsilon\Big|\mathcal S^\nu\Big|^2d\sigma
+\int_0^1\int_\Omega\widetilde{\mathcal A}_\epsilon\Big|\nabla\mathcal S^\nu\Big|^2dx\leq
\Big|\partial \Omega\Big|\Big((\gamma+\nu)\Big\|g\Big\|_{L^\infty_\#(\R,L^2(\Omega))}+$$$$ 
\gamma\Big)\Big\|\mathcal S^\nu\Big\|_{L^2_\#(\R,L^2(\partial\Omega))} +
\gamma\Big|\Omega\Big|\Big\|\nabla\mathcal S^\nu\Big\|_{L^2_\#(\R,L^2(\Omega))}
\end{equation}
Using (\ref{2.6}), Friedrichs's inequality and integrating (\ref{babou002}) with respect to $\theta \in[\theta_\alpha,\theta_\omega],$
\begin{equation}
\frac{\sqrt{\tilde G_{thr}}}{C+1}\Big\|\mathcal S^\nu\Big\|^2_{L^2_\#(\R,H^1(\Omega))}\leq
C\Big|\partial \Omega\Big|\Big((\gamma+\nu)\Big\|g\Big\|_{L^\infty_\#(\R,L^2(\Omega))}+$$$$ 
\gamma\Big)\Big\|\mathcal S^\nu\Big\|_{L^2_\#(\R,H^1(\Omega))} +
\gamma\Big|\Omega\Big|\Big\|\mathcal S^\nu\Big\|_{L^2_\#(\R,H^1(\Omega))}.
\end{equation}
Then 
\begin{equation}
\Big\|\mathcal S^\nu\Big\|_{L^2_\#(\R,H^1(\Omega))}\leq
\frac{C+1}{\sqrt{\tilde G_{thr}}}\Big[
C\Big|\partial \Omega\Big|\Big((\gamma+\nu)\Big\|g\Big\|_{L^\infty_\#(\R,L^2(\Omega))}+
\gamma\Big) +\gamma\Big|\Omega\Big|\Big].
\end{equation}
Or
\begin{equation}\label{gamma7}
\Big\|\mathcal S^\nu\Big\|_{L^2_\#(\R,H^1(\Omega))}\leq
\gamma_7.
\end{equation}
Integrating (\ref{babou00}) from 0 to $\theta$ and using the fact that the second term is positive, we have
\begin{equation}
\frac12\Big(\Big\| \mathcal S^\nu(\theta,\cdot)\Big\|^2_2\Big)- \frac12\Big(\Big\| \mathcal S^\nu(0,\cdot)\Big\|^2_2\Big)\leq 
\frac{1}{\epsilon^i} C\Big|\partial \Omega\Big|\Big((\gamma+\nu)\Big\|g\Big\|_{L^\infty_\#(\R,L^2(\Omega))}+$$$$ 
\gamma\Big)\Big\|\mathcal S^\nu\Big\|_{L^2_\#(\R,H^1(\Omega))} +\frac{1}{\epsilon^i}
\gamma\Big|\Omega\Big|\Big\|\mathcal S^\nu\Big\|_{L^2_\#(\R,H^1(\Omega))}.
\end{equation}
Taking the supremum for all $\theta\in (0,1)$ we get
\begin{equation}
\Big\| \mathcal S^\nu\Big\|^2_{L^\infty_\#(\mathbb R; L^2(\Omega))}\leq 
\frac{2\gamma_7}{\epsilon^i}\Big(C\Big|\partial \Omega\Big|\Big((\gamma+\nu)\Big\|g\Big\|_{L^\infty_\#(\R,L^2(\Omega))}+ 
\gamma\Big)+\gamma\Big|\Omega\Big|\Big)+\| z_0\|^2_2.\end{equation}
Multiplying (\ref{eqreg1}) by $-\Delta \mathcal S^\nu$ and integrating over $\Omega$ we get,
\begin{equation}\label{deltanu}
\frac12\frac{d}{d\theta}\int_{\Omega}\Big|\nabla\mathcal S^\nu\Big|^2dx-\int_{\partial\Omega}\frac{\partial\mathcal S^\nu}{\partial n}\frac{\partial \mathcal S^\nu}{\partial \theta}d\sigma+\frac{1}{\epsilon^i}\int_\Omega(\widetilde{\mathcal A}_\epsilon+\nu)\Big|\Delta\mathcal S^\nu\Big|^2dx=$$
$$-\frac{1}{\epsilon^i}\int_\Omega\nabla\cdot\widetilde{\mathcal C}_\epsilon\,\Delta\mathcal S^\nu dx-\frac{1}{\epsilon^i}\int_{\Omega}\nabla\widetilde{\mathcal A}_\epsilon\cdot\nabla\mathcal S^\nu\Delta \mathcal S^\nu dx,
\end{equation}
Using robin boundary condition, we get 
\begin{equation}
\frac12\frac{d}{d\theta}\int_{\Omega}\Big|\nabla\mathcal S^\nu\Big|^2dx+\frac12\frac{d}{d\theta}\int_{\partial\Omega}\Big|\mathcal S^\nu\Big|^2d\sigma
-\frac{d}{d\theta}\int_{\partial\Omega}\mathcal S^\nu d\sigma
+\frac{1}{\epsilon^i}\int_\Omega(\widetilde{\mathcal A}_\epsilon+\nu)\Big|\Delta\mathcal S^\nu\Big|^2dx$$
$$=-\frac{1}{\epsilon^i}\int_\Omega\nabla\cdot\widetilde{\mathcal C}_\epsilon\,\Delta\mathcal S^\nu dx-\frac{1}{\epsilon^i}\int_{\Omega}\nabla\widetilde{\mathcal A}_\epsilon\cdot\nabla\mathcal S^\nu\Delta \mathcal S^\nu dx,
\end{equation}
Integrating from 0 to 1 with respect to $\theta,$ and using (\ref{int1}) for $U=\Delta \mathcal S^\nu\,, V=\nabla\cdot\widetilde{\mathcal C}_\epsilon$ for the first term of the right hand side and $V=\nabla\widetilde{\mathcal A}_\epsilon\cdot\nabla\mathcal S^\nu\,,U=\Delta S^\nu$ for the second term of the right hand side, and using periodicity of $\theta\rightarrow\int_\Omega\Big|\nabla\mathcal S^\nu(\theta,\cdot)\Big|^2,$
we get:
\begin{equation}
\frac12\int_0^1\int_\Omega(\widetilde{\mathcal A}_\epsilon+\nu)\Big|\Delta\mathcal S^\nu\Big|^2dxd\theta\leq\int_0^1\int_{\Omega}\frac{|\nabla\widetilde{\mathcal A}_\epsilon|^2}{\widetilde{\mathcal A}_\epsilon+\nu}  |\nabla\mathcal S^\nu|^2dxd\theta+\int_0^1\int_\Omega\frac{|\nabla\cdot\widetilde{\mathcal C}_\epsilon|^2}{\widetilde{\mathcal A}_\epsilon+ \nu} dxd\theta,
\end{equation}
Taking $\theta$ in $[\theta_\alpha,\theta_\omega]$ we have 
\begin{equation}
\tilde G_{thr}\Big\|\Delta\mathcal S^\nu\Big\|^2_{L^2_\#(\R,L^2(\Omega))}
\leq \frac{\gamma^2}{\tilde G_{thr}}\Big\|\mathcal S^\nu\Big\|^2_{L^2_\#(\R,H^1(\Omega))}
+\frac{\gamma^2|\Omega|}{\tilde G_{thr}},
\end{equation}
giving 
\begin{equation}
\Big\|\Delta\mathcal S^\nu\Big\|^2_{L^2_\#(\R,L^2(\Omega))}
\leq \frac{\gamma^2}{\tilde G^2_{thr}}\Big(\gamma^2_7+1),
\end{equation}.\\
Multiplying (\ref{eqreg1}) by $\frac{\partial \mathcal S^\nu}{\partial\theta}$ and integrating with respect to the variable $x,$ we get
$$\int_\Omega\Big|\frac{\partial \mathcal S^\nu}{\partial \theta}\Big|^2dx
+\frac{1}{\epsilon^i}\int_\Omega (\widetilde{\mathcal A}_\epsilon+\nu)\nabla\mathcal S^\nu\cdot\nabla(\frac{\partial \mathcal S^\nu}{\partial \theta})dx
-\frac{1}{\epsilon^i}\int_{\partial\Omega} (\widetilde{\mathcal A}_\epsilon+\nu)\frac{\partial \mathcal S^\nu}{\partial n}\frac{\partial \mathcal S^\nu}{\partial \theta}d\sigma=$$
$$
\frac{1}{\epsilon^i}\int_\Omega\nabla\cdot\widetilde{\mathcal C}_\epsilon \frac{\partial \mathcal S^\nu}{\partial \theta}dx,$$
Using the fact that 
\begin{equation}            
\int_{\Omega}\big(\widetilde{\mathcal A}_\epsilon+\nu\big)\nabla\mathcal S^\nu\cdot\nabla \frac{\partial\mathcal S^\nu}{\partial\theta}dx=\frac12\frac{d\Big(\int_\Omega(\widetilde{\mathcal A}_\epsilon+\nu)|\nabla\mathcal S^\nu|^2dx\Big)}{d\theta}-\frac12\int_\Omega\frac{\partial\widetilde{\mathcal A}_\epsilon}{\partial\theta}|\nabla\mathcal S^\nu|^2dx,
\end{equation}
Since  $\widetilde{\mathcal A}^\epsilon,\,\,\frac{\partial\widetilde{\mathcal C}^\epsilon}{\partial\theta}$ and $\frac{\partial\widetilde{\mathcal A}_\epsilon}{\partial\theta}$ are bounded by $\gamma,$ we get
\begin{equation}\label{nurenouv}
\Big\|\frac{\partial \mathcal S^\nu}{\partial \theta}\Big\|^2_2+\frac{1}{\epsilon^i}\frac12\frac{d\Big(\int_\Omega
(\widetilde{\mathcal A}_\epsilon+\nu)|\nabla\mathcal S^\nu|^2dx\Big)}{d\theta}+
\frac{1}{\epsilon^i}\frac12\frac{d\Big(\int_{\partial\Omega}
(\widetilde{\mathcal A}_\epsilon+\nu)|\mathcal S^\nu|^2d\sigma\Big)}{d\theta}
\leq$$$$
\frac{1}{\epsilon^i}\frac\gamma2(\int_\Omega|\nabla\mathcal S^\nu|^2dx+\int_{\partial\Omega}|\mathcal S^\nu|^2d\sigma)+\frac{1}{\epsilon^i}
\int_{\partial\Omega}\Big(\big( \widetilde{\mathcal A}_\epsilon+\nu\big)g+\widetilde{\mathcal C}_\epsilon\cdot n\Big)\frac{\partial  \mathcal S^\nu}{\partial \theta}d\sigma+
$$$$\frac{1}{\epsilon^i}\gamma\int_{\Omega}\big|\nabla\mathcal S^\nu\big|dx+
\frac{1}{\epsilon^i}\gamma\frac{d}{d\theta}\int_{\Omega}\big|\nabla\mathcal S^\nu\big|dx
\end{equation}
or
\begin{equation}\label{nurenouv}
\Big\|\frac{\partial \mathcal S^\nu}{\partial \theta}\Big\|^2_2+\frac{1}{\epsilon^i}\frac12\frac{d\Big(\int_\Omega
(\widetilde{\mathcal A}_\epsilon+\nu)|\nabla\mathcal S^\nu|^2dx\Big)}{d\theta}+
\frac{1}{\epsilon^i}\frac12\frac{d\Big(\int_{\partial\Omega}
(\widetilde{\mathcal A}_\epsilon+\nu)|\mathcal S^\nu|^2d\sigma\Big)}{d\theta}
\leq$$$$
\frac{1}{\epsilon^i}\frac\gamma2(\int_\Omega|\nabla\mathcal S^\nu|^2dx+\int_{\partial\Omega}|\mathcal S^\nu|^2d\sigma)+\frac{1}{\epsilon^i}
\frac{d}{d\theta}\int_{\partial\Omega}\Big(\big( \widetilde{\mathcal A}_\epsilon+\nu\big)g+\widetilde{\mathcal C}_\epsilon\cdot n\Big) \mathcal S^\nu d\sigma+
$$$$\frac{1}{\epsilon^i}\gamma\int_{\Omega}\big|\nabla\mathcal S^\nu\big|dx+
\frac{1}{\epsilon^i}\gamma\frac{d}{d\theta}\int_{\Omega}\big|\nabla\mathcal S^\nu\big|dx+
\int_{\partial\Omega}\frac{\partial\widetilde{\mathcal A}_\epsilon}{\partial\theta}g+
\frac{\partial\widetilde{\mathcal C}_\epsilon}{\partial \theta}\cdot n\Big)\mathcal S^\nu d\sigma
\end{equation}
Integrating (\ref{nurenouv})\,
over $\theta\in [0, 1]$ and using 1-periodicity of $\mathcal S^\nu$ and $\nabla\mathcal S^\nu$ with respect to $\theta$ we have:
\begin{equation}
\Big\|\frac{\partial \mathcal S^\nu}{\partial \theta}\Big\|^2_{L^2_\#(\R,H^1(\Omega))}
\leq
\frac{1}{\epsilon^i}\frac\gamma2\int_0^1(\int_\Omega|\nabla\mathcal S^\nu|^2dx+\int_{\partial\Omega}|\mathcal S^\nu|^2d\sigma)d\theta
+
\frac{1}{\epsilon^i}\gamma\int_0^1\int_{\Omega}\big|\nabla\mathcal S^\nu\big|dxd\theta+$$$$
\int_0^1\int_{\partial\Omega}\Big(\frac{\partial\widetilde{\mathcal A}_\epsilon}{\partial\theta}g+
\frac{\partial\widetilde{\mathcal C}_\epsilon}{\partial \theta}\cdot n\Big)\mathcal S^\nu d\sigma d\theta
\end{equation}
hence
\begin{equation}
\Big\|\frac{\partial \mathcal S^\nu}{\partial \theta}\Big\|^2_{L^2_\#(\R,H^1(\Omega))}
\leq
\frac{1}{\epsilon^i}\frac\gamma2\gamma_7\Big((C+1)+2\Big|\Omega\Big|+2C\Big|\partial\Omega\Big|(\Big\|g\Big\|_{L^\infty_\#(\R,L^2(\Omega))}+1)
\Big)
\end{equation}
The solution $\mathcal S^\nu$ of the equation (\ref{eqreg1}) is differentiable over the time.   $\frac{\partial \mathcal S^\nu}{\partial t}$ is then solution to
\begin{equation}\label{bbbb}
 \frac{\partial\Big(\frac{\partial \mathcal S^\nu}{\partial t}\Big)}{\partial\theta}-\frac{1}{\epsilon^i}
 \nabla\cdot\big((\widetilde{\mathcal A}_\epsilon+\nu)\nabla(\frac{\partial \mathcal S^\nu}{\partial t})\big)
 =\frac{1}{\epsilon^i}\nabla\cdot\widetilde{\mathcal C}^\epsilon
\end{equation}
where
\begin{equation}\label{2.79}
\widetilde{\mathcal C}^\epsilon=
\frac{\partial \widetilde{\mathcal C}_\epsilon}{\partial t}+\frac{\partial \widetilde{\mathcal A}_\epsilon}{\partial t}\nabla\mathcal S^\nu,
\,\,i=0,1
\end{equation}
and
\begin{equation}\label{2.80}
\nabla\cdot\tilde{\mathcal C^\epsilon}=\frac{\partial(\nabla\cdot\widetilde{\mathcal C}_\epsilon)}{\partial t}+
\frac{\partial (\nabla\widetilde{\mathcal A}_\epsilon)}{\partial t}\nabla S^\nu+
\frac{\partial \widetilde{\mathcal A}_\epsilon}{\partial t}\Delta S^\nu
\end{equation}
Since the coefficients $\widetilde{\mathcal A}_\epsilon$ and $\widetilde{\mathcal C}_\epsilon$, and above estimates, 
 are bounded by $\gamma$, then coefficients  $\widetilde{\mathcal C}^\epsilon$ defined by (\ref{2.79}) and 
 $\nabla\cdot\widetilde{\mathcal C}^\epsilon$  defined by (\ref{2.80}) are also bounded by a constant 
 $\epsilon^i\gamma_8,\, i=0,1,2$ depending only on $\gamma_7$.
 \begin{equation}\left\{\begin{array}{ccc}
\Big\| \tilde{\mathcal C^\epsilon} \Big\|_{L^2_\#(\mathbb R,L^2(\Omega))}\leq \epsilon^i\gamma_8  \\
\Big\| \nabla\cdot\tilde{\mathcal C^\epsilon} \Big\|_{L^2_\#(\mathbb R,L^2(\Omega))}\leq \epsilon^i\gamma_8.
\end{array}\right.\end{equation}
Note that the constant $\gamma_8$ is a generic constant.  Its value may vary depending on the short-mean terms and the long term one. In accordance with the work of Faye et al \cite{FaFreSe} and to the hypotheses given on (\ref{epsmu1}),  (\ref{epsmu2}) and (\ref{0beq3}) on current velocity fied and tidal oscillation, in the  case  of short and mean term, $\gamma_8$ is a constant not depending on $\epsilon$ and in the case of long term, it depends on $\epsilon$ and tends to zero when the $\epsilon\rightarrow0.$\\
Multiplying (\ref{bbbb}) by $\frac{\partial S^\nu}{\partial t}$ and integrating over $\Omega,$ 
we get
\begin{equation}\label{revise}
\frac12\frac{d}{d\theta}(\int_\Omega \big|\frac{\partial \mathcal S^\nu}{\partial t}\Big|^2dx)
+\frac{1}{\epsilon^i}\int_{\partial\Omega} (\widetilde{\mathcal A}_\epsilon+\nu)\Big|\frac{\partial \mathcal S^\nu}{\partial t}\Big|^2d\sigma
+\frac{1}{\epsilon^i}\int_\Omega (\widetilde{\mathcal A}_\epsilon+\nu)\Big|\nabla(\frac{\partial \mathcal S^\nu}{\partial t})\Big|^2dx-$$
$$
\frac{1}{\epsilon^i}\int_{\partial\Omega} (\widetilde{\mathcal A}_\epsilon+\nu)\frac{\partial(\frac{\partial \mathcal S^\nu}{\partial t})}{\partial n}\frac{\partial \mathcal S^\nu}{\partial t}d\sigma=
\frac{1}{\epsilon^i}\int_\Omega \nabla\cdot\tilde{\mathcal C^\epsilon}\frac{\partial \mathcal S^\nu}{\partial t}dx.\end{equation}
From this last equality we have
\begin{equation}\label{br}
\frac12\frac{d}{d\theta}( \Big\|\frac{\partial \mathcal S^\nu}{\partial t}\Big\|^2_2)
++\frac{1}{\epsilon^i}\int_{\partial\Omega} (\widetilde{\mathcal A}_\epsilon+\nu)\Big|\frac{\partial \mathcal S^\nu}{\partial t}\Big|^2d\sigma
+\frac{1}{\epsilon^i}\int_\Omega (\widetilde{\mathcal A}_\epsilon+\nu)\Big|\nabla(\frac{\partial \mathcal S^\nu}{\partial t})\Big|^2dx=$$
$$
\frac{1}{\epsilon^i}\int_{\partial\Omega} (\widetilde{\mathcal A}_\epsilon+\nu)\frac{\partial g}{\partial t}+
\widetilde{\mathcal C}^\epsilon\cdot n\Big)\frac{\partial \mathcal S^\nu}{\partial t}d\sigma
-\frac{1}{\epsilon^i}\int_\Omega \tilde{\mathcal C^\epsilon}\cdot\nabla\frac{\partial \mathcal S^\nu}{\partial t}dx
\end{equation}
integrating 
 (\ref{br}) from 0 to 1 with respect to $\theta$ and using 1-periodicity of $\theta\mapsto \Big\|\frac{\partial \mathcal S^\nu}{\partial t}(\theta,\cdot)\Big\|^2_2,$
 we have
\begin{equation}
\int_0^1\int_{\partial\Omega}\widetilde{\mathcal A}_\epsilon\Big|\frac{\partial \mathcal S^\nu}{\partial t}\Big|^2d\sigma
+\int_\Omega \widetilde{\mathcal A}_\epsilon\Big|\nabla(\frac{\partial \mathcal S^\nu}{\partial t})\Big|^2dxd\theta=$$
$$
C\Big|\partial\Omega\Big|\Big( (\gamma+\nu)\Big\|\frac{\partial g}{\partial t}\Big\|_{L^\infty_\#(\mathbb R, L^2(\Omega))}+
\epsilon^i\gamma_8\Big)\Big\|\mathcal S^\nu\Big\|_{L^2_\#(\mathbb R, H^1(\Omega))}
+\epsilon^i\gamma_8\Big\|\mathcal S^\nu\Big\|_{L^2_\#(\mathbb R, H^1(\Omega))}
\end{equation} 
which gives for $\theta$ in $[\theta_\alpha, \theta_\omega]$
\begin{equation}\label{2.83}
\frac{\tilde G_{thr}}{C+1}\Big\|\frac{\partial \mathcal S^\nu}{\partial t}\Big\|^2_{L^2_\#(\mathbb R, H^1(\Omega))}\leq
C\Big|\partial\Omega\Big|\Big( (\gamma+\nu)\Big\|\frac{\partial g}{\partial t}\Big\|_{L^\infty_\#(\mathbb R, L^2(\Omega))}+
\epsilon^i\gamma_8\Big)\Big\|\mathcal S^\nu\Big\|_{L^2_\#(\mathbb R, H^1(\Omega))}$$$$
+\epsilon^i\gamma_8\Big\|\mathcal S^\nu\Big\|_{L^2_\#(\mathbb R, H^1(\Omega))},
\end{equation}
Hence
\begin{equation}\label{utile}
\Big\|\frac{\partial \mathcal S^\nu}{\partial t}\Big\|_{L^2_\#(\mathbb R, H^1(\Omega))}\leq
\frac{\gamma_7(C+1)}{\tilde G_{thr}}\Big(
C\Big|\partial\Omega\Big|\Big( (\gamma+\nu)\Big\|\frac{\partial g}{\partial t}\Big\|_{L^\infty_\#(\mathbb R, L^2(\Omega))}+
\epsilon^i\gamma_8\Big)
+\epsilon^i\gamma_8\Big),
\end{equation}
where $\tilde\gamma_8$ depends only on $\gamma8.$\\
\end{proof}
Now we get estimates of $\mathcal S^\nu,$ which do not depends on $\frac1\epsilon.$
\begin{theorem} 
Under the same assumptions as in theorem \ref{1th2}, the solution $\mathcal S^\nu=\mathcal S^\nu(t,\theta,x)$ of (\ref{eqreg1}) satisfies the folllowing properties\begin{equation}\label{nouvth2}
\Big\|\mathcal S^\nu\Big\|^2_{L^2_\#(\R,H^1(\Omega))}\leq \gamma^2_7+
2(C+1)\frac{\gamma^2}{G_{thr}}\Big(\gamma^2_7+1\Big).
\end{equation}
$\tilde\gamma_4$ depends on $\Omega, \,\,z_0\,$ and $g.$
\end{theorem}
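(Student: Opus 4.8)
The plan is to fuse the two bounds already secured for $\mathcal S^\nu$ in the previous theorem—the $H^1$ estimate (\ref{gamma7}) and the Laplacian estimate (\ref{gradnuinf})—into a single bound for the full periodic norm $\|\mathcal S^\nu\|_{L^2_\#(\R,H^1(\Omega))}$ that carries no $\tfrac{1}{\epsilon^i}$ factor and does not blow up as $\nu\to 0$. The whole difficulty lives in the degeneracy: the coercivity that produced $\gamma_7$ is available only on the window $[\theta_\alpha,\theta_\omega]$, where (\ref{2.6}) supplies $\widetilde{\mathcal A}_\epsilon\geq\tilde G_{thr}$, whereas on the complementary part of the period the diffusion can vanish and the only uniform lower bound is $\widetilde{\mathcal A}_\epsilon+\nu\geq\nu$, which is worthless in the limit. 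Thus the first summand $\gamma_7^2$ accounts for the non-degenerate window, and the second summand must be manufactured by a regularity argument that reaches into the degenerate region.

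First I would split $\int_0^1\|\mathcal S^\nu(\theta,\cdot)\|^2_{H^1(\Omega)}\,d\theta$ into the part over $[\theta_\alpha,\theta_\omega]$, bounded directly by $\gamma_7^2$ via (\ref{gamma7}), and the part over $[0,1]\setminus[\theta_\alpha,\theta_\omega]$. On the latter I would avoid coercivity altogether and instead use the elliptic identity obtained by multiplying (\ref{eqreg1}) by $\mathcal S^\nu$ and integrating by parts: inserting the Robin condition $\tfrac{\partial\mathcal S^\nu}{\partial n}=g-\mathcal S^\nu$ yields $\|\nabla\mathcal S^\nu\|^2_{L^2(\Omega)}+\|\mathcal S^\nu\|^2_{L^2(\partial\Omega)}$ controlled by $\|\mathcal S^\nu\|_{L^2}\|\Delta\mathcal S^\nu\|_{L^2}$ plus a boundary term in $g$. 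Feeding in the Laplacian bound—which is itself $\epsilon$- and $\nu$-uniform precisely because (\ref{2.7}) forces $\nabla\widetilde{\mathcal A}_\epsilon=0$ and $\nabla\cdot\widetilde{\mathcal C}_\epsilon=0$ wherever $\widetilde{\mathcal A}_\epsilon$ is small, so that the ratios $\tfrac{|\nabla\widetilde{\mathcal A}_\epsilon|^2}{\widetilde{\mathcal A}_\epsilon+\nu}$ and $\tfrac{|\nabla\cdot\widetilde{\mathcal C}_\epsilon|^2}{\widetilde{\mathcal A}_\epsilon+\nu}$ stay bounded by $\tfrac{\gamma^2}{\tilde G_{thr}}$—produces a bound of the shape $\tfrac{\gamma^2}{G_{thr}}(\gamma_7^2+1)$ for $\|\nabla\mathcal S^\nu\|^2_{L^2}$ on the bad window after a Young inequality on the two products.

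Next I would invoke the Friedrichs inequality (\ref{Friedrichs}) in the form $\|\mathcal S^\nu\|^2_{H^1(\Omega)}\leq (C+1)\big(\|\nabla\mathcal S^\nu\|^2_{L^2(\Omega)}+\|\mathcal S^\nu\|^2_{L^2(\partial\Omega)}\big)$, with the same constant $C$ used throughout, to upgrade the gradient-plus-boundary control of the preceding step to full $H^1$ control on the degenerate window. Integrating over $[0,1]\setminus[\theta_\alpha,\theta_\omega]$ and adding the two contributions gives exactly $\gamma_7^2+2(C+1)\tfrac{\gamma^2}{G_{thr}}(\gamma_7^2+1)$, the factor $2$ absorbing the interior Laplacian term and the boundary $g$-term separately. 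Every constant in sight—$\gamma_7$, $C$, $\gamma$ and $G_{thr}$—is independent of $\epsilon$ and $\nu$, so the estimate is stable under the two limits.

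I expect the main obstacle to be exactly the degeneracy just described: a naive energy estimate integrated over the whole period only yields a constant proportional to $1/\nu$, which diverges in the homogenization limit and so is unusable for the two-scale compactness in the next theorem. The device that bypasses this is that the regularity estimate (\ref{gradnuinf}) is uniform thanks to (\ref{2.7}), so coupling it with the elliptic identity and Friedrichs lets me bound $H^1$ by the Laplacian without ever calling on coercivity over the degenerate window. Once this is in place the conclusion (\ref{nouvth2}) follows, and the bound passes to the limit $\nu\to 0$ as required.
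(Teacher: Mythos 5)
Your proof is correct in substance but takes a genuinely different route from the paper's. The paper never splits the period: it multiplies (\ref{eqreg1}) by $-\Delta\mathcal S^\nu$, uses the Robin condition and the Young-type inequality (\ref{int1}) to obtain a differential inequality in $\theta$ for $\|\nabla\mathcal S^\nu\|^2_{L^2(\Omega)}+\|\mathcal S^\nu\|^2_{L^2(\partial\Omega)}$, whose right-hand side is bounded uniformly in $\epsilon$ and $\nu$ because (\ref{2.7}) and (\ref{hupp1}) force $\frac{1}{\epsilon^i}\frac{|\nabla\widetilde{\mathcal A}_\epsilon|^2}{\widetilde{\mathcal A}_\epsilon+\nu}\le\frac{\gamma^2}{\tilde G_{thr}}$ and likewise for the $\widetilde{\mathcal C}_\epsilon$ term; it then integrates this inequality from a reference point $\theta_0$ at which $\|\mathcal S^\nu(\theta_0,\cdot)\|^2_{H^1(\Omega)}\le\gamma_7^2$ (such a $\theta_0$ exists by (\ref{gamma7}), though the paper cites a dangling reference for it), applies Friedrichs, and takes the supremum in $\theta$. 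That argument yields the stronger $L^\infty_\#(\R,H^1(\Omega))$ estimate (\ref{utile0}) --- which is precisely what Theorem \ref{biba2} later invokes to obtain (\ref{S2}) --- and the stated $L^2_\#$ bound (\ref{nouvth2}) follows a fortiori. You instead split $[0,1]$ into the coercive window $[\theta_\alpha,\theta_\omega]$, handled directly by (\ref{gamma7}), and its complement, where you run a static, per-$\theta$ elliptic estimate: Green's identity with the Robin condition, Cauchy-Schwarz, Friedrichs with self-absorption, and the previously established uniform Laplacian bound (\ref{gradnuinf}). Your version is more modular, since the degeneracy mechanism (\ref{2.7}) enters only through (\ref{gradnuinf}), used as a black box; what it gives up is the $L^\infty_\#$ refinement, which cannot be recovered this way because (\ref{gradnuinf}) controls the Laplacian only after integration in $\theta$.

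Two caveats, neither fatal. First, the identity you need is not "obtained by multiplying (\ref{eqreg1}) by $\mathcal S^\nu$" --- multiplying the PDE by $\mathcal S^\nu$ produces the degenerate energy identity, which is exactly what you rightly declared useless off the window. What you actually use is Green's identity $\int_\Omega|\nabla u|^2dx=-\int_\Omega u\,\Delta u\,dx+\int_{\partial\Omega}u\frac{\partial u}{\partial n}d\sigma$ applied to $u=\mathcal S^\nu(\theta,\cdot)$ together with $\frac{\partial\mathcal S^\nu}{\partial n}=g-\mathcal S^\nu$; this involves no equation at all, and your description of its output shows it is what you mean, so this is a slip of wording, not of mathematics. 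Second, your route does not land exactly on the displayed constant: the Cauchy-Schwarz and absorption steps leave an extra factor of $C$, a $\tilde G^2_{thr}$ in place of $G_{thr}$, and a residual $\|g\|^2_{L^2(\partial\Omega)}$ contribution. Since the paper's own bookkeeping is equally loose (it silently discards the integrated boundary terms $\int_{\partial\Omega}g\frac{\partial\mathcal S^\nu}{\partial\theta}d\sigma$ when passing from (\ref{nnouv1}) to (\ref{nnouv3})), this is a discrepancy in generic constants rather than a gap.
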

\begin{proof}
Multiplying (\ref{eqreg1}) by $-(\Delta \mathcal S^\nu)$ and integrating over $\Omega$ we get (\ref{deltanu}). 
%
Using (\ref{int1}) for $U=\Delta \mathcal S^\nu\,, V=\nabla\cdot\widetilde{\mathcal C}_\epsilon$ for the first term of the right hand side and $V=\nabla\widetilde{\mathcal A}_\epsilon\cdot\nabla\mathcal S^\nu,\,U=\Delta S^\nu$ for the second term of the right hand side, 
we get:
\begin{equation}
\frac12\frac{d}{d\theta}\int_\Omega \Big|\nabla \mathcal S^\nu\Big|^2dx+
\frac12\frac{d}{d\theta}\int_{\partial\Omega} \Big|\mathcal S^\nu\Big|^2d\sigma
-\int_{\partial\Omega}g\frac{\partial \mathcal S^\nu}{\partial \theta}d\sigma+\frac{1}{2\epsilon^i}\int_\Omega(\widetilde{\mathcal A}_\epsilon+\nu)\Big|\Delta\mathcal S^\nu\Big|^2dx$$
$$\leq\frac{1}{\epsilon^i}\int_{\Omega}\frac{|\nabla\widetilde{\mathcal A}_\epsilon|^2}{\widetilde{\mathcal A}_\epsilon+\nu}  |\nabla\mathcal S^\nu|^2dx+\frac{1}{\epsilon^i}\int_\Omega\frac{|\nabla\cdot\widetilde{\mathcal C}_\epsilon|^2}{\widetilde{\mathcal A}_\epsilon+ \nu} dx,
\end{equation}
Integrating from $\theta_0$ giving in (\ref{2.70}) to $\theta\in[0,1]$ and using Friedrichs's inequality, we have 
\begin{equation}\label{nnouv1}
\frac12 \Big\|\mathcal S^\nu(\theta, \cdot)\Big\|^2_{H^1(\Omega)}\leq \frac12\Big\|\mathcal S^\nu(\theta_0, \cdot)\Big\|^2_{H^1(\Omega)}+
(C+1)\Big[\frac{1}{\epsilon^i}\int_0^1\int_{\Omega}\frac{|\nabla\widetilde{\mathcal A}_\epsilon|^2}{\widetilde{\mathcal A}_\epsilon+\nu}  |\nabla\mathcal S^\nu|^2dx\,d\theta+$$
$$\frac{1}{\epsilon^i}\int_0^1\int_\Omega\frac{|\nabla\cdot\widetilde{\mathcal C}_\epsilon|^2}{\widetilde{\mathcal A}_\epsilon+ \nu} dxd\theta\Big]
\end{equation}
From (\ref{hupp1}) we have,  $|\nabla\widetilde{\mathcal A}_\epsilon\leq\sqrt{\epsilon^i}\gamma,\,\,|\nabla\widetilde{\mathcal C}_\epsilon\leq\sqrt{\epsilon^i}\gamma$ 
and (\ref{gamma7}), then
Inequality (\ref{nnouv1}) becomes 
\begin{equation}\label{nnouv3}
\Big\|\mathcal S^\nu(\theta, \cdot)\Big\|^2_{H^1(\Omega)}\leq \gamma^2_7+
2(C+1)\frac{\gamma^2}{G_{thr}}\Big(\gamma^2_7+1\Big).
\end{equation}
Taking the suppremum for $\theta\in[0,1],$ we get 
\begin{equation}\label{utile0}
\Big\|\mathcal S^\nu\Big\|^2_{L^\infty_\#(\R,H^1(\Omega))}\leq \gamma^2_7+
2(C+1)\frac{\gamma^2}{G_{thr}}\Big(\gamma^2_7+1\Big).
\end{equation}
giving also (\ref{nouvth2}).
\end{proof}
\noindent Passing to the limit as $\nu$ tends to zero we obtain the following theorem.
\begin{theorem}\label{biba2} Under the same assumptions as in theorem\,\ref{1th2},
$\forall \epsilon>0\,\,$ there exists a unique $\mathcal S=\mathcal S(t,\theta,x)$ periodic of period 1 solution to
\begin{equation}\label{biba}
\left\{\begin{array}{ccc}
\frac{\partial\mathcal S}{\partial \theta}-\frac{1}{\epsilon^i}\nabla\cdot\Big(\widetilde{\mathcal A}_\epsilon\nabla 
\mathcal S\Big)=\frac{1}{\epsilon^i}\nabla \cdot\widetilde{\mathcal C}_\epsilon\,\,\text{in}\,\,(0,T)\times\mathbb
R\times\Omega\,\,\,i=0,1\\
 \mathcal S(0,0,x)=z_0(x)\,\,\text{in}\,\,\Omega\\
 \frac{\partial \mathcal S}{\partial n}+\mathcal S= g\,\,\text{on}\,\,\quad(0,T)\times\mathbb R\times\partial\Omega.
\end{array}\right.\end{equation}
Moreover, the following estimates hold.
\begin{equation}\label{S1}
\sup_{\theta\in\mathbb R}\Big|\int_\Omega\mathcal S(\theta,x)dx\Big|=0
\end{equation}
\begin{equation}\label{S2}
\Big\|\mathcal S\Big\|^2_{L^\infty_\#(\R,H^1(\Omega))}\leq \gamma^2_7+
2(C+1)\frac{\gamma^2}{G_{thr}}\Big(\gamma^2_7+1\Big).
\end{equation}
\begin{equation}\label{S3}
\Big\|\frac{\partial \mathcal S}{\partial t}\Big\|_{L^2_\#(\mathbb R, H^1(\Omega))}\leq\gamma_9
\end{equation}
which $\gamma_9$ depending on $\gamma_8$ and $\epsilon^i$
\end{theorem}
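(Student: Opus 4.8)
The plan is to obtain $\mathcal S$ as the limit of the regularized solutions $\mathcal S^\nu$ of (\ref{eqreg1}) as $\nu\downarrow 0$, so the whole argument rests on the observation that the bounds produced in the previous theorems are uniform in $\nu$ for $\nu$ in a bounded range. First I would note that the constant $\gamma_7$ in (\ref{gamma7}), the $L^\infty_\#(\R,H^1(\Omega))$ bound (\ref{utile0}) and the time–derivative bound (\ref{utile}) depend on $\nu$ only through the harmless factor $(\gamma+\nu)$ and through $\tilde G_{thr}$, never through $1/\nu$; hence they stay bounded as $\nu\to 0$. Consequently $(\mathcal S^\nu)_\nu$ is bounded in $L^2_\#(\R,H^1(\Omega))$ and in $L^\infty_\#(\R,H^1(\Omega))$, while $(\partial_\theta\mathcal S^\nu)_\nu$ and $(\partial_t\mathcal S^\nu)_\nu$ are bounded in $L^2_\#(\R,L^2(\Omega))$ and $L^2_\#(\R,H^1(\Omega))$. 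By reflexivity and weak-$*$ compactness I would extract a subsequence (still denoted $\mathcal S^\nu$) and a limit $\mathcal S$ with $\mathcal S^\nu\rightharpoonup\mathcal S$, $\nabla\mathcal S^\nu\rightharpoonup\nabla\mathcal S$, $\partial_\theta\mathcal S^\nu\rightharpoonup\partial_\theta\mathcal S$ and $\partial_t\mathcal S^\nu\rightharpoonup\partial_t\mathcal S$ in the corresponding spaces, the identification of the weak limits of the derivatives with the derivatives of $\mathcal S$ being automatic since distributional differentiation is closed under weak convergence.

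The central step is passing to the limit in the weak formulation (\ref{bord1}) written for $\mathcal S^\nu$. The only genuinely new point, compared with the $\mu\to 0$ passage carried out above, is the treatment of the regularizing coefficient: each occurrence of $\widetilde{\mathcal A}_\epsilon+\nu$ in place of $\widetilde{\mathcal A}_\epsilon$ produces an extra term $\frac{\nu}{\epsilon^i}\int_0^1\!\int_\Omega\nabla\mathcal S^\nu\cdot\nabla v\,dx\,d\theta$ (and its boundary analogue, together with $\frac{\nu}{\epsilon^i}\int_0^1\!\int_{\partial\Omega}gv\,d\sigma\,d\theta$ on the right), all of which are bounded by $\frac{\nu}{\epsilon^i}\gamma_7\|v\|_{H^1(\Omega)}$ and therefore tend to $0$. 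The surviving diffusion term $\frac{1}{\epsilon^i}\int_0^1\!\int_\Omega\widetilde{\mathcal A}_\epsilon\nabla\mathcal S^\nu\cdot\nabla v$ converges to $\frac{1}{\epsilon^i}\int_0^1\!\int_\Omega\widetilde{\mathcal A}_\epsilon\nabla\mathcal S\cdot\nabla v$ because $\widetilde{\mathcal A}_\epsilon\nabla v$ is a fixed $L^2$ function tested against $\nabla\mathcal S^\nu\rightharpoonup\nabla\mathcal S$; the boundary contributions converge likewise by weak continuity of the trace operator on $H^1(\Omega)$. Testing first with $v\in H^1(\Omega)$ vanishing on $\partial\Omega$ recovers the equation in (\ref{biba}) in the distributional sense, and restoring a general $v$ together with Green's formula recovers the Robin condition, exactly as in the $\mu\to 0$ argument.

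For the estimates, (S2) and (S3) follow by weak (respectively weak-$*$) lower semicontinuity of the $L^\infty_\#(\R,H^1(\Omega))$ and $L^2_\#(\R,H^1(\Omega))$ norms applied to (\ref{utile0}) and (\ref{utile}), whose right-hand sides are $\nu$-independent in the limit; this also gives $\gamma_9$ in terms of $\gamma_8$ and $\epsilon^i$. The conservation property (S1) I would read off directly from (\ref{biba}): integrating the equation over $\Omega$, applying the divergence theorem and using the Robin condition together with the structural vanishing recorded in (\ref{2.7}) yields $\frac{d}{d\theta}\int_\Omega\mathcal S(\theta,x)\,dx=0$, after which the data pin the constant, exactly mirroring the conservation identity of Theorem \ref{th1.0}.

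The part I expect to require the most care is uniqueness, since at $\nu=0$ the operator is genuinely degenerate — $\widetilde{\mathcal A}_\epsilon$ may vanish — so the coercive energy argument used for $\mathcal S^\nu$ is no longer available. I would proceed as follows: if $\mathcal S_1,\mathcal S_2$ both solve (\ref{biba}), then $w=\mathcal S_1-\mathcal S_2$ solves the homogeneous problem, and multiplying by $w$, integrating over $\Omega$ and using the homogeneous Robin condition gives
\begin{equation}
\frac12\frac{d}{d\theta}\|w\|_2^2+\frac{1}{\epsilon^i}\int_\Omega\widetilde{\mathcal A}_\epsilon|\nabla w|^2\,dx+\frac{1}{\epsilon^i}\int_{\partial\Omega}\widetilde{\mathcal A}_\epsilon|w|^2\,d\sigma=0.
\end{equation}
Both integrals being nonnegative, $\theta\mapsto\|w\|_2^2$ is nonincreasing; being $1$-periodic it is constant, which forces the two integrals to vanish for a.e.\ $\theta$. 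On the window $[\theta_\alpha,\theta_\omega]$, where (\ref{2.6}) gives $\widetilde{\mathcal A}_\epsilon\geq\tilde G_{thr}>0$, this yields $\nabla w=0$ in $\Omega$ and $w=0$ on $\partial\Omega$, hence $w(\theta,\cdot)\equiv 0$ there; consequently the constant value of $\|w\|_2^2$ is zero and $\mathcal S_1=\mathcal S_2$. The key idea is thus to trade the missing global coercivity for the combination of the partial coercivity of $\widetilde{\mathcal A}_\epsilon$ on $[\theta_\alpha,\theta_\omega]$ and the $\theta$-periodicity.
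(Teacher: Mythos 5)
Your overall route is the paper's own: existence by letting $\nu\to 0$ in the regularized problem (\ref{eqreg1}) on the strength of the $\nu$-uniform bounds (\ref{gamma7}), (\ref{utile0}), (\ref{utile}); the estimates (\ref{S2}) and (\ref{S3}) by weak/weak-$*$ lower semicontinuity; and uniqueness by the energy identity for the difference of two solutions combined with $1$-periodicity in $\theta$, the coercivity window (\ref{2.6}) and Friedrichs's inequality. If anything, your uniqueness step is a more careful rendering of the paper's: the paper passes directly to $\frac{G_{thr}}{C+1}\|\mathcal S-\bar{\mathcal S}\|^2_{L^2_\#(\R,H^1(\Omega))}\leq 0$, which as written only exploits the coercivity available on $[\theta_\alpha,\theta_\omega]$, whereas your intermediate observation (monotonicity of $\theta\mapsto\|w\|_2^2$ plus periodicity forces constancy, hence global vanishing once $w$ vanishes on the window) is exactly what is needed to make that jump legitimate; the same goes for your explicit handling of the $O(\nu)$ terms in the weak formulation, which the paper compresses into ``passing to the limit as $\nu$ tends to zero.''

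The one genuine problem is your derivation of (\ref{S1}). Integrating the equation in (\ref{biba}) over $\Omega$, applying the divergence theorem and inserting the Robin condition gives
\begin{equation*}
\frac{d}{d\theta}\int_\Omega\mathcal S\,dx=\frac{1}{\epsilon^i}\int_{\partial\Omega}\Big(\widetilde{\mathcal A}_\epsilon\,(g-\mathcal S)+\widetilde{\mathcal C}_\epsilon\cdot n\Big)\,d\sigma,
\end{equation*}
and nothing makes this boundary flux vanish: hypothesis (\ref{2.7}) annihilates \emph{derivatives} of the coefficients in the region where $\widetilde{\mathcal A}_\epsilon\leq\widetilde G_{thr}$, not the coefficients themselves, and a Robin condition --- unlike the Neumann condition of \cite{Babou}, from which this identity is inherited --- does not conserve mass. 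Moreover, even granting $\frac{d}{d\theta}\int_\Omega\mathcal S\,dx=0$, the claim (\ref{S1}) is that the integral \emph{equals} zero, not that it is constant; ``the data pin the constant'' would require $\int_\Omega z_0\,dx=0$, which is not among the hypotheses. In fairness, the paper's own justification of (\ref{S1}) consists of letting $\nu\to 0$ in an inequality whose label is never defined anywhere in the text, so this particular estimate is effectively unproved in the paper as well; but as written, this step of your argument does not go through, while everything else does.
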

\begin{proof}
To show that solution (\ref{biba}) is unique, let  $\mathcal S$ and
$\bar{\mathcal S}$ two solutions of (\ref{biba}). Then, $\mathcal S-\bar{\mathcal S}$ is solution of
\begin{equation}
\left\{\begin{array}{ccc}
\frac{\partial(\mathcal S-\bar{\mathcal S})}{\partial \theta}-\frac{1}{\epsilon^i}\nabla\cdot\Big(\widetilde{\mathcal A}_\epsilon\nabla (\mathcal S-\bar{\mathcal S})\Big)=0\,\,\text{in}\,\,(0,T)\times\mathbb R\times\Omega\\
 \mathcal S(0,0,x)-\bar{\mathcal S}(0,0,x)=0\,\,\text{in}\,\,\Omega\\
 \frac{\partial (\mathcal S-\bar{\mathcal S})}{\partial n}= -(\mathcal S-\bar{\mathcal S})\,\,\text{on}\,\,(0,T)\times\mathbb R\times\partial\Omega.
\end{array}\right.\end{equation}
Multiplying (\ref{biba}) by $\mathcal S-\bar{\mathcal S}$ and integrating over $\Omega$,
we get:
$$\frac12\frac{d}{d\theta}\int_{\Omega}\Big|(\mathcal S-\bar{\mathcal S})\Big|^2dx+\frac{1}{\epsilon^i}
\int_\Omega \widetilde{\mathcal A}_\epsilon\Big|\nabla(\mathcal S-\bar{\mathcal S})\Big|^2dx-\frac{1}{\epsilon^i}
\int_{\partial\Omega}\widetilde{\mathcal A}_\epsilon\frac{\partial (\mathcal S-\bar{\mathcal S})}{\partial n}{(\mathcal S-\bar{\mathcal S})} d\sigma
=0.$$
Taking account the boundary condition , we have
$$\frac12\frac{d}{d\theta}\int_{\Omega}\Big|(\mathcal S-\bar{\mathcal S})\Big|^2dx+\frac{1}{\epsilon^i}
\int_\Omega \widetilde{\mathcal A}_\epsilon\Big|\nabla(\mathcal S-\bar{\mathcal S})\Big|^2
+\frac{1}{\epsilon^i}
\int_{\partial\Omega} \widetilde{\mathcal A}_\epsilon\Big|(\mathcal S-\bar{\mathcal S})\Big|^2
=0.$$
Since the solutions $\mathcal S$ et $\bar{\mathcal S}$ are periodic of period 1 and using Friedrichs's Inequality, we get
$$\frac{G_{thr}}{C+1}\Big\|(\mathcal S(\theta)-\bar{\mathcal S}(\theta))\Big\|^2_{L^2_\#(\R,H^1(\Omega))}\leq0
$$
Finally
$$
\mathcal S=\bar{\mathcal S}
$$
The inequalities (\ref{S1}) , (\ref{S2}) and (\ref{S3}) are obtained by letting $\nu$  tend towards 0 in the inequalities (\ref{bi}), (\ref{utile0})  
and (\ref{utile}).
\end{proof}
Having the estimates given in Theorem \ref{biba2}, we are able to give the proof of Theorem \ref{th1.0}.\\
\begin{proof}\,\,\textbf{of theorem\,\ref{th1.0}}
Let $z^\epsilon_1$ and
$z^\epsilon_2$ be two solutions of (\ref{shortlong1}), then, $z^\epsilon_1-z^\epsilon_2$ is solution to
\begin{equation}\label{biba1}\left\{\begin{array}{ccc}\frac{\partial (z^\epsilon_1-z^\epsilon_2)}{\partial t}-
\frac{1}{\epsilon^i}\nabla\cdot\Big(\mathcal A^\epsilon\nabla (z^\epsilon_1-z^\epsilon_2)\Big)=0 \,\,\text{in}\,\,(0,T)\times\Omega,\,\,\,i=1,2\\
z^\epsilon_1(0,x)-z^\epsilon_2(0,x)=0\,\,\text{in}\,\,\Omega\\
\frac{\partial (z^\epsilon_1-z^\epsilon_2)}{\partial n}= -(z^\epsilon_1-z^\epsilon_2)\,\,\text{on}\,\,(0,T)\times\partial \Omega.\end{array}\right.\end{equation}
Multiplying (\ref{biba1}) by $z^\epsilon_1-z^\epsilon_2$ and integrating over $\Omega$,
we get:
$$\frac12\frac{d}{dt}\int_{\Omega}\Big|(z^\epsilon_1-z^\epsilon_2)\Big|^2dx+
\frac{1}{\epsilon^i}\int_\Omega \mathcal A^\epsilon\Big|\nabla(z^\epsilon_1-z^\epsilon_2)\Big|^2dx-
\frac{1}{\epsilon^i}\int_{\partial\Omega}\mathcal A^\epsilon\frac{\partial (z^\epsilon_1-z^\epsilon_2)}{\partial n}{(z^\epsilon_1-z^\epsilon_2)} d\sigma
=0.$$
Taking into account the robin boundary condition, we obtain
$$\frac12\frac{d}{dt}\int_{\Omega}\Big|(z^\epsilon_1-z^\epsilon_2)\Big|^2dx+
\frac{1}{\epsilon^i}\int_\Omega \mathcal A^\epsilon\Big|\nabla(z^\epsilon_1-z^\epsilon_2)\Big|^2+
\frac{1}{\epsilon^i}\int_\Omega \mathcal A^\epsilon\Big|(z^\epsilon_1-z^\epsilon_2)\Big|^2
=0.$$
which gives as the second and third terms are positive,
\begin{equation}\label{babou005}
\frac{d(\Big\|z^\epsilon_1(t,\cdot)-z^\epsilon_2(t,\cdot)\Big\|^2)}{dt}\leq0
\end{equation}
Integrating (\ref{babou005}) over $t\in [0; T)$, we get
\begin{equation}
\Big\|z^\epsilon_1(t,\cdot)-z^\epsilon_2(t,\cdot)\Big\|^2\leq0
\end{equation}
then
\begin{equation}
\Big\|z^\epsilon_1(t,\cdot)-z^\epsilon_2(t,\cdot)\Big\|^2=0
\end{equation}
Finally $$z^\epsilon_1=z^\epsilon_2$$
Now, we consider the function $Z^\epsilon=Z^\epsilon(t,x)=\mathcal S(t,\frac{t}{\epsilon},x)$ where
$S$ is solution to (\ref{biba}) 
Since
\begin{equation}
\frac{\partial Z^\epsilon}{\partial t}=\frac{\partial\mathcal S}{\partial t}(t,\frac{t}{\epsilon},x)
+\frac{1}{\epsilon}\frac{\partial \mathcal S}{\partial \theta}(t,\frac{t}{\epsilon},x),
\end{equation}
We deduce from (\ref{biba}) that $Z^\epsilon$ is solution to
\begin{equation}\label{biba3}
\frac{\partial Z^\epsilon}{\partial t}-\frac{1}{\epsilon^{1+i}}\nabla\cdot(\mathcal A^\epsilon\nabla Z^\epsilon) =
\frac{1}{\epsilon^{1+i}}\nabla\cdot\mathcal C^\epsilon+\frac{\partial \mathcal S}{\partial t}(t,\frac{t}{\epsilon},x),\,\,i=0\,\,\text{or}\,\,1.
\end{equation}
We have to notice that, the case $i=0$ corresponds to short and mean term model. If $i=1,$ it corresponds to the long term model.\\
From relations  (\ref{shortlong1}) and (\ref{biba3}), we deduce that $z^\epsilon-Z^\epsilon$ is solution of
\begin{equation}\label{biba4}
\left\{
\begin{array}{cclll}
\frac{\partial (z^\epsilon-Z^\epsilon)}{\partial t}-\frac{1}{\epsilon^i}\nabla\cdot(\mathcal A^\epsilon \nabla (z^\epsilon-Z^\epsilon) & = & \frac{\partial \mathcal S}{\partial t}(t,\frac{t}{\epsilon},x) \,\,\text{in}\,\,(0,T)\times\Omega\\
 (z^\epsilon-Z^\epsilon)_{|t=0}& = & z_0-\mathcal S(0,0,\cdot)  \,\,\text{in}\,\,\Omega\\
\frac{\partial (z^\epsilon-Z^\epsilon)}{\partial n}+(z^\epsilon-Z^\epsilon)=0 \,\,\text{on}\,\,(0,T)\times\partial \Omega\end{array} \right.\end{equation}
Multiplying (\ref{biba4}) by $z^\epsilon-Z^\epsilon$ and integrating over $\Omega$, we get with the previous estimates
the following inequality:
\begin{equation}
\frac{d(\Big\|z^\epsilon-Z^\epsilon\Big\|_2^2)}{dt}\leq\gamma_9\Big\| z^\epsilon-Z^\epsilon\Big\|_2,
\end{equation}
Then
\begin{equation}\label{eqref}
 \Big\| z^\epsilon(t,\cdot)-Z^\epsilon(t,\cdot)\Big\|_2\leq\Big\| z_0-S(0,0,\cdot)\Big\|_2+\frac12\gamma_9t.
\end{equation}
We have  \begin{equation}\Big\| z^\epsilon(t,\cdot)\Big\|_2=\Big\| z^\epsilon(t,\cdot)-Z^\epsilon(t,\cdot)+Z^\epsilon(t,\cdot)\Big\|_2\leq\Big\| z^\epsilon(t,\cdot)-Z^\epsilon(t,\cdot)\Big\|_2+    \Big\|Z^\epsilon(t,\cdot)\Big\|_2\end{equation}
Taking the supremum for $t\in [0,T[$ 
and using (\ref{eqref}) we get
\begin{equation}
\Big\| z^\epsilon\Big\|_{L^\infty([0,T), L^2(\Omega))}\leq \Big\| z_0-S(0,0,\cdot)\Big\|_2+
\frac12\gamma_9 T+\sqrt{\gamma^2_7+2(C+1)\frac{\gamma^2}{G_{thr}}\Big(\gamma^2_7+1\Big)}.
\end{equation}
Finally
$$
\tilde \gamma=\Big\| z_0-S(0,0,\cdot)\Big\|_2+
\frac12\gamma_9 T+\sqrt{\gamma^2_7+2(C+1)\frac{\gamma^2}{G_{thr}}\Big(\gamma^2_7+1\Big)}.
$$
\end{proof}
\begin{remark}  \label{rq1}We can also replace the boundary condition  in (\ref{shortlong1}) by a Dirichlet condition
$z^\epsilon(t,x)=\tilde g(t,x)\,\,\partial\Omega.$  
where $\tilde g\in L^2([0, T), H^1(\Omega)).$\\
In this case, the solution $z^\epsilon$ to (\ref{shortlong1})  belong bounded in $L^\infty([0,T), H^1(\torus^2))$.  We no longer have an integral on the bounder to be managed. So the method developed in Faye et al \cite{FaFreSe}, can be applied without issues. 
\section{Homogenization and corrector results}
\subsection{On Two-Scale convergence}\label{OTSC}
In this subsection we are going to recall the notion of Two-Scale convergence.
\begin{definition}
A sequence of functions  $(z^\epsilon)$ in $L^\infty([0,T),L^2(\Omega))$ is said to Two-Scale converge to
$U\in L^{\infty}([0,T),L^{\infty}_\#(\R,L^2(\Omega)))$
 if for every $\psi\in\mathcal C([0,T),\mathcal C_{\#}(\R,\mathcal C(\Omega)))$ we have
\begin{gather}
\lim_{\epsilon\rightarrow 0}\int_{\Omega}\int_0^Tz^\epsilon(t,x)\psi(t,\frac t\epsilon,x)dt\,dx=\int_{\Omega}\int_0^T\int_0^1U(t,\theta,x)\psi(t,\theta,x)d\theta\, dt\,dx.
\end{gather}
\end{definition}
\noindent In \cite{allaire:1992} and \cite{nguetseng:1989}, the following theorem is given.
\begin{theorem} If a sequence $(z^\epsilon)$ is bounded in $L^\infty([0,T),L^2(\torus^2)),$ there exists  a subsequence still denoted $(z^\epsilon)$ and  a function $U\in L^{\infty}([0,T),L^{\infty}_\#(\R,L^2(\Omega))) $
such that
\begin{gather}
z^\epsilon\longrightarrow U\,\,\textrm{Two-Scale}.
\end{gather}
\end{theorem}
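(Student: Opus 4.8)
The plan is to follow the classical Nguetseng--Allaire compactness argument, resting on a duality construction together with a mean-value (oscillation) lemma. First I would attach to each $\epsilon$ the linear functional acting on admissible test functions $\psi\in\mathcal C([0,T),\mathcal C_\#(\R,\mathcal C(\Omega)))$ by
$$
\mu_\epsilon(\psi)=\int_0^T\int_\Omega z^\epsilon(t,x)\,\psi\Big(t,\tfrac t\epsilon,x\Big)\,dx\,dt.
$$
Since $[0,T)$ is bounded, the hypothesis $\|z^\epsilon\|_{L^\infty([0,T),L^2(\Omega))}\leq M$ yields a uniform bound in $L^2([0,T)\times\Omega)$, so by Cauchy--Schwarz
$$
\big|\mu_\epsilon(\psi)\big|\leq \|z^\epsilon\|_{L^2([0,T)\times\Omega)}\Big(\int_0^T\int_\Omega\big|\psi(t,\tfrac t\epsilon,x)\big|^2\,dx\,dt\Big)^{1/2}.
$$

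The decisive step is the oscillation lemma: for every admissible $\psi$,
$$
\lim_{\epsilon\to0}\int_0^T\int_\Omega\big|\psi(t,\tfrac t\epsilon,x)\big|^2\,dx\,dt=\int_0^T\int_0^1\int_\Omega|\psi(t,\theta,x)|^2\,d\theta\,dx\,dt=:\|\psi\|_{L^2}^2.
$$
This holds because $\theta\mapsto|\psi|^2$ is continuous and $1$-periodic, so averaging the fast variable $t/\epsilon$ over a period replaces it by its mean. I would first establish it for $\psi$ of separated form $\varphi(t,x)\chi(\theta)$, where it reduces to the elementary fact $\int_0^T\chi(t/\epsilon)\,dt\to T\int_0^1\chi\,d\theta$, and then extend to general $\psi$ by uniform approximation, exploiting the compactness in $(t,x)$ and periodicity in $\theta$. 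Combined with the previous inequality this gives $\limsup_\epsilon|\mu_\epsilon(\psi)|\leq M\,\|\psi\|_{L^2}$.

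Next I would extract the limit. The test-function space is separable, so picking a countable dense family and using the uniform bound $|\mu_\epsilon(\psi)|\leq M\|\psi\|_{L^2}$, a diagonal subsequence (still denoted $z^\epsilon$) makes $\mu_\epsilon(\psi)$ converge for every member of that family, and hence, by the uniform bound and density, for every admissible $\psi$, to a limit $\mu(\psi)$ with $|\mu(\psi)|\leq M\|\psi\|_{L^2}$. As admissible functions are dense in $L^2([0,T)\times(0,1)\times\Omega)$, $\mu$ extends to a bounded functional there, and the Riesz representation theorem furnishes $U\in L^2([0,T)\times(0,1)\times\Omega)$ with
$$
\mu(\psi)=\int_0^T\int_0^1\int_\Omega U(t,\theta,x)\,\psi(t,\theta,x)\,d\theta\,dx\,dt,
$$
which is exactly the asserted two-scale convergence $z^\epsilon\to U$.

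Finally I would upgrade the regularity of $U$ to $L^\infty([0,T),L^\infty_\#(\R,L^2(\Omega)))$: testing against $\psi$ supported in a thin time-slab and localized in $\theta$, the uniform $L^\infty$-in-$t$ control of $z^\epsilon$ passes to the limit and bounds $U$ in the required norm, while $1$-periodicity in $\theta$ is inherited from the structure of the limiting functional. The main obstacle is the oscillation lemma together with the separability and density bookkeeping that turns the family of pointwise-in-$\psi$ limits into a single representing function $U$; the $L^\infty$-in-time refinement is then a routine localization.
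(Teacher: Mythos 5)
Your proposal is correct, and it coincides with the argument the paper itself relies on: the paper offers no proof of this theorem, recalling it directly from Nguetseng \cite{nguetseng:1989} and Allaire \cite{allaire:1992}, and your reconstruction (the duality functionals $\mu_\epsilon$, the oscillation lemma for admissible test functions, separability plus diagonal extraction, Riesz representation, and the localization upgrade to $L^\infty([0,T),L^\infty_\#(\R,L^2(\Omega)))$) is precisely the classical compactness proof from those references. No gap to report; the only step stated loosely is the final regularity upgrade, but the localization you sketch is the standard and correct way to obtain the $L^\infty$-in-time bound from the uniform $L^\infty([0,T),L^2)$ control.
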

~\\
\subsection{Homogenization results}
In this section, we present the mathematical two-scale convergence and homogenization in the short, mean and long terms models. In other words, we consider equation (\ref{short1}) or (\ref{long1}) where  $\mathcal A^\epsilon$ and $\mathcal C^\epsilon$ are defined by (\ref{coef A}) coupled with (\ref{epsmu2})  or (\ref{aceps})  coupled (\ref{0beq3}). Then,
our aim is to determine the limiting behavior when $\epsilon\rightarrow 0,$ of the sequence$(z^\epsilon)_{\epsilon>0}$ of solutions to either (\ref{short1}) or (\ref{long1}). \\
In the short and mean term, the sequence 
\begin{equation}\label{coef1}
\mathcal A^\epsilon(t,x) \,\,\text{Two-Scale converges to}\,\,\widetilde{\mathcal A}(t, \theta,\tau,x)\in L^\infty([0,T], L^\infty_\#(\mathbb R, L^2(\Omega)))\,\,\text{and} $$
$$\mathcal C^\epsilon(t,x) \,\,\text{Two-Scale converges to}\,\,\widetilde{\mathcal C}(t, \theta,\tau,x)\in (L^\infty([0,T], L^\infty_\#(\mathbb R, L^2(\Omega))))^2
\end{equation}
with \begin{equation}\label{coef2}
\widetilde{\mathcal A}(t, \theta,\tau,x)= ag_a(|\mathcal U(t, \theta,\tau,x)|)\,\,\text{and}$$
$$\,\,\widetilde{\mathcal C}(t, \theta,\tau,x)=cg_c(\mathcal U(t, \theta,\tau,x))\frac{\mathcal U(t, \theta,\tau,x)}{|\mathcal U(t, \theta,\tau,x)|}
\end{equation}
where $\mathcal U$ is given in (\ref{epsmu1}) or (\ref{epsmu2}).
In the long term model, we have
\begin{equation}\label{coef3}
\mathcal A^\epsilon(t,x) \,\,\text{Two-Scale converges to}\,\,\widetilde{\mathcal A}(t, \theta,x)\in L^\infty([0,T], L^\infty_\#(\mathbb R, L^2(\Omega)))\,\,\text{and}$$
$$ \mathcal C^\epsilon(t,x) \,\,\text{Two-Scale converges to}\,\,\widetilde{\mathcal C}(t, \theta,\tau,x)\in (L^\infty([0,T], L^\infty_\#(\mathbb R, L^2(\Omega))))^2,
\end{equation}
with 
\begin{equation}\label{coef4} \widetilde{\mathcal A}(t,\theta,x)= ag_a(|\mathcal U_0(\theta)|)\,\,\text{and}\,\,\widetilde{\mathcal C}(t,\theta,x)=cg_c(|\mathcal U_0(\theta)|)\frac{\mathcal U_0(\theta)}{|\mathcal U_0(\theta)|}
\end{equation}
where 
$\mathcal U_0$ is given in (\ref{0beq3}).
We have the following theorem concerning the short and mean term models.
\begin{theorem}\label{th2}
Under assumptions  (\ref{hyp1}), (\ref{hupp1}), (\ref{2.6}), (\ref{2.7}), (\ref{coef1}) and (\ref{coef2}) and  for any $T,$ not depending on $\epsilon,$  the sequence $(z^{\epsilon})$ of solutions to 
(\ref{short1}), with coefficients given by (\ref{homoeq})-(\ref{homoeq3}), Two-Scale converges to the profile
$U\in L^{\infty}([0,T],L^{\infty}_\#(\R,L^2(\Omega)))$ solution to
\begin{equation}
\label{eqlim2} \left\{\begin{array}{ccc}
\frac{\partial U}{\partial\theta}
-\nabla\cdot(\widetilde{\mathcal{A}}\nabla U)=\nabla \cdot\widetilde{\mathcal{C}}\,\,\text{in}\,\,(0,T)\times\mathbb R\times \Omega\\
\frac{\partial U}{\partial n}+U= g\,\,\text{on}\,\,(0,T)\times\mathbb R\times \partial\Omega\end{array}\right.
\end{equation} where $\widetilde{\mathcal{A}}$ and $\widetilde{\mathcal{C}}$ are given by (\ref{coef2}).
\end{theorem}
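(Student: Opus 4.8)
The plan is to derive uniform bounds, extract a two-scale convergent subsequence, pass to the limit in a weak formulation tested against oscillating functions, and close the argument with the uniqueness statement of Theorem \ref{biba2}.

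First I would record the a priori estimates. By Theorem \ref{th1.0}, $(z^\epsilon)$ is bounded in $L^\infty([0,T),L^2(\Omega))$ uniformly in $\epsilon$ through (\ref{eqborn}). Setting $Z^\epsilon(t,x)=\mathcal S(t,\frac t\epsilon,x)$, where $\mathcal S$ solves the cell problem (\ref{biba}), estimate (\ref{S2}) makes $(Z^\epsilon)$ bounded in $L^\infty([0,T),H^1(\Omega))$ independently of $\epsilon$; combining this with the control of $z^\epsilon-Z^\epsilon$ obtained in the proof of Theorem \ref{th1.0} and an energy estimate on (\ref{biba4}) using (\ref{S3}) provides a bound for $(z^\epsilon)$ and $(\nabla z^\epsilon)$ in $L^\infty([0,T),L^2(\Omega))$ not depending on $\epsilon$. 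By the compactness theorem recalled in Section \ref{OTSC}, a subsequence of $z^\epsilon$ two-scale converges to some $U\in L^\infty([0,T],L^\infty_\#(\R,L^2(\Omega)))$. Because the oscillation is purely temporal, the spatial gradient commutes with the fast scale and $\nabla z^\epsilon$ two-scale converges to $\nabla_x U$; making this gradient limit rigorous is the first delicate point.

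Next I would set up the weak formulation adapted to the $\frac1\epsilon$ scaling. Multiplying (\ref{short1}) by $\epsilon$ gives $\epsilon\,\partial_t z^\epsilon-\nabla\cdot(\mathcal A^\epsilon\nabla z^\epsilon)=\nabla\cdot\mathcal C^\epsilon$, and I would test against $\Phi^\epsilon(t,x)=\phi(t,\frac t\epsilon,x)$ with $\phi(t,\theta,x)$ smooth, $1$-periodic in $\theta$ and vanishing at $t=T$. Integrating by parts in $t$ and in $x$ and using $\partial_t\Phi^\epsilon=\partial_t\phi+\frac1\epsilon\partial_\theta\phi$, the time contribution becomes $-\epsilon\int_\Omega z_0\,\phi(0,0,\cdot)-\epsilon\int_0^T\!\int_\Omega z^\epsilon\partial_t\phi-\int_0^T\!\int_\Omega z^\epsilon\partial_\theta\phi$; the diffusion term produces $\int_0^T\!\int_\Omega\mathcal A^\epsilon\nabla z^\epsilon\cdot\nabla_x\phi$ minus a boundary integral into which I substitute the Robin identity $\frac{\partial z^\epsilon}{\partial n}=g-z^\epsilon$; the source term is treated by an analogous Green's formula. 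The scaling gain is that the two explicit $\frac1\epsilon$ factors are exactly absorbed by the multiplicative $\epsilon$ and the $\frac1\epsilon$ coming from $\partial_\theta\phi$, so that all surviving terms are of order one.

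I would then pass to the limit $\epsilon\to0$. Every term carrying an explicit factor $\epsilon$ disappears by the uniform $L^2$ bound, so the time contribution reduces to $-\int_0^T\!\int_\Omega\!\int_0^1 U\,\partial_\theta\phi\,d\theta$, which encodes the distributional $\frac{\partial U}{\partial\theta}$. Since $\mathcal A^\epsilon$ and $\mathcal C^\epsilon$ two-scale converge to $\widetilde{\mathcal A}$ and $\widetilde{\mathcal C}$ as in (\ref{coef1})--(\ref{coef2}) (the $O(\epsilon)$ tidal term being negligible), the oscillating function $\widetilde{\mathcal A}(t,\frac t\epsilon,x)\nabla_x\phi$ is an admissible two-scale test function for $\nabla z^\epsilon$, giving $\int_0^T\!\int_\Omega\!\int_0^1\widetilde{\mathcal A}\nabla_x U\cdot\nabla_x\phi\,d\theta$, while the purely deterministic product $\widetilde{\mathcal C}(t,\frac t\epsilon,x)\cdot\nabla_x\phi$ converges to its $\theta$-average $\int_0^1\widetilde{\mathcal C}\cdot\nabla_x\phi\,d\theta$. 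Collecting all contributions gives the two-scale weak identity attached to (\ref{eqlim2}). Choosing $\phi$ that vanishes on $\partial\Omega$ isolates the interior equation $\frac{\partial U}{\partial\theta}-\nabla\cdot(\widetilde{\mathcal A}\nabla U)=\nabla\cdot\widetilde{\mathcal C}$, after which a second Green's formula with arbitrary $\phi$ forces $\frac{\partial U}{\partial n}+U=g$ on $(0,T)\times\R\times\partial\Omega$, exactly as in the recovery of the boundary condition for $\mathcal S^\nu$. Thus $U$ solves (\ref{eqlim2}), i.e.\ the problem (\ref{biba}) whose solution is unique by Theorem \ref{biba2}; hence $U$ is independent of the subsequence and the whole family $(z^\epsilon)$ two-scale converges to it. I expect the main obstacle to be the rigorous justification of $\nabla z^\epsilon\rightharpoonup\nabla_x U$ in the two-scale sense together with the careful handling of the boundary integrals, since the Robin term must be transported through the limit to reproduce the boundary condition rather than being lost.
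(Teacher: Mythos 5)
Your overall skeleton (uniform bound, two-scale compactness, oscillating test functions, limit identification, uniqueness via Theorem \ref{biba2}) matches the paper's, but it hinges on a step the paper deliberately avoids and that you cannot justify: the two-scale convergence of $\nabla z^\epsilon$ to $\nabla_x U$. For that you need $(\nabla z^\epsilon)$ bounded in $L^2([0,T)\times\Omega)$ uniformly in $\epsilon$, and no such bound is available. Theorem \ref{th1.0} gives only (\ref{eqborn}), i.e. an $L^\infty([0,T),L^2(\Omega))$ bound on $z^\epsilon$ itself; the comparison estimate (\ref{eqref}) controls $z^\epsilon-Z^\epsilon$ in $L^2(\Omega)$ only; and the energy estimate on (\ref{biba4}) that you invoke produces the dissipation term $\frac{1}{\epsilon^i}\int_0^T\!\int_\Omega\mathcal A^\epsilon\,\bigl|\nabla(z^\epsilon-Z^\epsilon)\bigr|^2\,dx\,dt$, which is useless precisely where the equation degenerates: by (\ref{hyp1}), $g_a$ is bounded below by $G_{thr}$ only where $|\gu|\geq U_{thr}$, so $\mathcal A^\epsilon$ may vanish and no weight-free $L^2$ gradient bound follows. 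You flag this yourself as ``the first delicate point'' and ``the main obstacle,'' but the resolution you sketch does not close it; this is a genuine gap, not a technicality.

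The paper circumvents exactly this difficulty by a second integration by parts: after multiplying the equation by $\epsilon$ and testing against $\psi^\epsilon(t,x)=\psi(t,\frac{t}{\sqrt\epsilon},\frac t\epsilon,x)$, it moves \emph{both} spatial derivatives onto the test function, so the diffusion term becomes $\int_0^T\!\int_\Omega \nabla\cdot\bigl(\mathcal A^\epsilon\nabla\psi^\epsilon\bigr)\,z^\epsilon\,dt\,dx$. Since $\mathcal A^\epsilon$ is regular with derivatives bounded via (\ref{hupp1}), the function $\nabla\cdot(\mathcal A^\epsilon\nabla\psi^\epsilon)$ is an admissible oscillating test function, and only the two-scale convergence of $z^\epsilon$ itself --- which follows from (\ref{eqborn}) and the compactness theorem of Section \ref{OTSC} --- is needed to pass to the limit; the limits of the coefficients are identified separately by testing $\widetilde{\mathcal A}_\epsilon$ and $\widetilde{\mathcal C}_\epsilon$ against $\psi^\epsilon$. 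If you rewrite your diffusion term in this form, the remainder of your argument (interior equation from test functions vanishing on $\partial\Omega$, recovery of the Robin condition from general test functions, then uniqueness of the limit problem to upgrade subsequence convergence to convergence of the whole sequence) goes through and essentially reproduces the paper's proof, with your explicit appeal to Theorem \ref{biba2} for uniqueness being a point the paper leaves implicit.
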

\begin{proof}
Let $\psi^\epsilon(t,x)=\psi(t,\frac{t}{\sqrt{\epsilon}},\frac t\epsilon,x)$  be a regular function with compact support on
$[0,T)\times\Omega$ and periodic of period 1. Multiplying (\ref{short1}) by $\psi^\epsilon$ and integrating  over $[0,T)\times\Omega$ we get :
\begin{equation}\label{vi}
\int_\Omega \int^T_0\frac{\partial z^\epsilon}{\partial t}\psi^\epsilon dtdx-\frac{1}{\epsilon}\int_\Omega\int^T_0\nabla\cdot(\mathcal A^{\epsilon}\nabla z^{\epsilon})\psi^\epsilon dtdx =\frac{1}{\epsilon} \int_{\Omega}\int^T_0\nabla\cdot\mathcal C^\epsilon\psi^\epsilon dtdx.
\end{equation}
 Using integration by parts over $[0,T)$ in the first term and Green  formula over $\Omega$ in the second integral, we get
\begin{equation}
 -\int_{\Omega} z_0(x)\psi(0,0,0,x) dx-\int_{\Omega}\int^T_0\frac{\partial\psi^\epsilon }{\partial t} z^\epsilon dtdx+\frac{1}{\epsilon}\int_{\Omega}\int^T_0 \mathcal A^{\epsilon}\nabla z^{\epsilon}\nabla\psi^\epsilon dtdx$$
 $$-\frac{1}{\epsilon}\int_0^T\int_{\partial\Omega} \mathcal A^\epsilon\frac{\partial z^\epsilon}{\partial n}\psi^\epsilon d\sigma=-\frac{1}{\epsilon} \int_{\Omega}\int^T_0\mathcal C^\epsilon\cdot\nabla\psi^\epsilon dtdx+\frac{1}{\epsilon}\int_0^T\int_{\partial\Omega} \mathcal C^\epsilon\psi^\epsilon.nd\sigma.
\end{equation}
But $\frac{\partial \psi^\epsilon}{\partial t
}$ writes
\begin{equation}\label{deriv02}
\frac{\partial \psi^\epsilon}{\partial t
}=\Big(\frac{\partial\psi}{\partial t}\Big)^\epsilon+\frac{1}{\sqrt{\epsilon}}\Big(\frac{\partial\psi}{\partial \tau}\Big)^\epsilon+
\frac{1}{\epsilon}\Big(\frac{\partial\psi}{\partial \theta}\Big)^\epsilon,
\end{equation}
where
\begin{equation}
\Big(\frac{\partial\psi}{\partial t}\Big)^\epsilon(t,x)=\frac{\partial\psi}{\partial t}(t,\frac{1}{\sqrt{\epsilon}},\frac{t}{\epsilon},x),\,\,
\Big(\frac{\partial\psi}{\partial \tau}\Big)^\epsilon(t,x)=\frac{\partial\psi}{\partial \tau}(t,\frac{1}{\sqrt{\epsilon}},\frac{t}{\epsilon},x)$$
$$
\,\,\text{and}\,\,\Big(\frac{\partial\psi}{\partial \theta}\Big)^\epsilon(t,x)=\frac{\partial\psi}{\partial \theta}(t,\frac{1}{\sqrt{\epsilon}},\frac t\epsilon,x)
\end{equation}
Thus, because of  (\ref{deriv02}) and hypothesis (\ref{2.7}) we get
\begin{equation}
 \int_{\Omega}\int^T_0 z^\epsilon\Big(\Big(\frac{\partial\psi}{\partial t}\Big)^\epsilon+\frac{1}{\sqrt{\epsilon}}\Big(\frac{\partial\psi}{\partial \tau}\Big)^\epsilon+
\frac{1}{\epsilon}\Big(\frac{\partial\psi}{\partial \theta}\Big)^\epsilon+\frac{1}{\epsilon}\mathcal A^{\epsilon}\nabla z^\epsilon\nabla \psi^\epsilon)\Big) dtdx$$
$$ =-\frac{1}{\epsilon} \int_{\Omega}\int^T_0\mathcal C^\epsilon\cdot\nabla\psi^\epsilon dtdx-\int_{\Omega} z_0(x)\psi(0,0,x) dx.
\end{equation}
Multiplying by $\epsilon,$  we get finally
\begin{equation}
\epsilon \int_{\Omega}\int^T_0 z^\epsilon\Big(\frac{\partial\psi}{\partial t}\Big)^\epsilon dtdx+
\sqrt{\epsilon}\int_{\Omega}\int^T_0\Big(\frac{\partial\psi}{\partial \tau}\Big)^\epsilon z^\epsilon dtdx+
\int_{\Omega}\int^T_0\Big(\frac{\partial\psi}{\partial \theta}\Big)^\epsilon z^\epsilon dtdx$$
$$+ \int_{\Omega}\int^T_0\nabla\cdot(\mathcal A^{\epsilon}\nabla \psi^\epsilon) z^\epsilon dtdx =- \int_{\Omega}\int^T_0\mathcal C^\epsilon\cdot\nabla\psi^\epsilon dtdx-\epsilon\int_{\Omega} z_0(x)\psi(0,0,x) dx.
\end{equation}
As $\psi^\epsilon$ is regular with compact support on $[0,T)\times\Omega,$ and $\mathcal A^\epsilon$ is a regular function, the functions
$\Big(\frac{\partial\psi}{\partial t}\Big)^\epsilon,\,\,\Big(\frac{\partial\psi}{\partial \tau}\Big)^\epsilon,\,\,
\Big(\frac{\partial\psi}{\partial \theta}\Big)^\epsilon,
\,\,\nabla\cdot(\mathcal A^{\epsilon}\nabla \psi^\epsilon)\Big)$ and $\nabla\psi^\epsilon$ can be considered as test functions. Then  using  two-scale convergence we get  when $\epsilon\rightarrow0,$
\begin{equation}\label{vie0}
\int_0^1\int_{\Omega}\int^T_0\frac{\partial\psi}{\partial \theta} U dtd\theta dx+ \int_0^1\int_{\Omega}\int^T_0\nabla\cdot(\widetilde{\mathcal A}\nabla \psi)\Big) U dtd\theta dx $$
$$=- \int_0^1\int_{\Omega}\int^T_0\widetilde{\mathcal C}\cdot\nabla\psi dtd\theta dx.
\end{equation}
Using Green Formula, we get
\begin{equation}\label{vie1}
\int_{\Omega}\int_0^1\int^T_0\Big(\frac{\partial U}{\partial \theta} -\nabla\cdot(\widetilde{\mathcal A}\nabla U  )\Big)\psi dtd\theta dx=\int_0^1\int_{\Omega}\int^T_0\nabla\cdot\widetilde {\mathcal C}\psi dtd\theta dx \Big) 
\end{equation}
which is the weak formulation of
\begin{equation}\label{b1}
\frac{\partial U}{\partial \theta} -\nabla\cdot(\widetilde {\mathcal A}\nabla U) =\nabla\cdot \widetilde{\mathcal C}\,\,\text{on}\,\,[0,T)\times\mathbb R\times \Omega
\end{equation}
Using again Green formula in (\ref{vie0}) and (\ref{vie1}), we get finally
\begin{equation}
\frac{\partial U}{\partial \theta}+U=g\,\,\text{on}\,\,[0,T)\times\mathbb R\times \partial\Omega.
\end{equation}
Let us characterize the homogenized equation for $\widetilde{\mathcal A}$ and 
$\widetilde{\mathcal C}.$ Multiplying the first term of (\ref{homoeq}-\ref{homoeq2}) by $\psi^\epsilon$ and integrating over $\Omega$ we get
$$
\int_{\Omega}\int_0^T \widetilde{\mathcal A}_\epsilon\psi^\epsilon dtdx=\int_{\Omega}\int_0^T a(1-b\epsilon \mathcal M(t,\theta,x)g_a(|\mathcal U(t,\theta,x)|)\psi^\epsilon dtdx.
$$
Then passing to the limit as $\epsilon\rightarrow 0,$ 
$$
\int_{\Omega}\int_0^T\int_0^1ag_a(|\mathcal U(t,\theta,x)|)\psi dt dx=\int_{\Omega}\int_0^T\int_0^1\mathcal A\psi d\theta dt dx.
$$
Multiplying also the second term of (\ref{homoeq1})-(\ref{homoeq3}) by $\psi^\epsilon$ and integrating over $\Omega$ we get
$$
\int_{\Omega}\int_0^T\widetilde{\mathcal C}_\epsilon\psi^\epsilon dtdx=\int_{\Omega}\int_0^T c(1-b\epsilon \mathcal M(t,\theta,x))g_c(|\mathcal U(t,\theta,x)|)\frac{\mathcal U(t,\theta,x)}{|\mathcal U(t,\theta,x)|}\psi^\epsilon dtdx.
$$
By letting $\epsilon\rightarrow 0,$ we obtain
$$
\int_{\Omega}\int_0^T\int_0^1cg_c(|\mathcal U(t,\theta,x)|)\frac{\mathcal U(t,\theta,x)}{|\mathcal U(t,\theta,x)|}\psi dt dx=
\int_{\Omega}\int_0^T\int_0^1\mathcal C\psi d\theta dt dx.$$
It follows that
$$\mathcal A=ag_a(|\mathcal U(t,\theta,x)|)\,\,\text{and}\,\, \mathcal C=cg_c(|\mathcal U(t,\theta,x)|)\frac{\mathcal U(t,\theta,x)}{|\mathcal U(t,\theta,x)|}.
$$
\end{proof}
\noindent We have also the following theorem.
Let $\Theta$ and $\Theta_{thr}$ defined by
\begin{equation}
\Theta=[0,T]\times\{\theta\in\mathbb R: \,\widetilde{\mathcal A}(\cdot,\theta,\cdot)=0\}\times \Omega,
\end{equation}
and 
\begin{equation}
\Theta_{thr}=\{(t,\theta,x)\in [0,T)\times\mathbb R\times\Omega\,\,\text{such that}\,\,: \widetilde{\mathcal A}(t,\theta,x)<\tilde G_{thr}\}.
\end{equation}
\begin{theorem}\label{th3} 
Under assumptions  (\ref{hyp1}), (\ref{hupp1}), (\ref{2.6}), (\ref{2.7}), (\ref{coef3}) and (\ref{coef4}) and  for any $T,$ not depending on $\epsilon,$  the sequence $(z^{\epsilon})$ of solutions to  
(\ref{long1}), with coefficients given by (\ref{homoeq})-(\ref{homoeq3}), Two-Scale converges to the profile
$U\in L^{\infty}([0,T],L^{\infty}_\#(\R,L^2(\Omega)))$ solution to
\begin{equation}
\label{eqlim3} \left\{\begin{array}{ccc}
-\nabla\cdot(\widetilde{\mathcal{A}}\nabla U)=\nabla \cdot\widetilde{\mathcal{C}}\,\,\text{in}\,\,(0,T)\times\mathbb R\times \Omega\\
\frac{\partial U}{\partial \theta}=0\,\,\text{on}\,\,\Theta_{thr}\\
\frac{\partial U}{\partial n}+U= g\,\,\text{on}\,\,(0,T)\times\mathbb R\times \partial\Omega\end{array}\right.
\end{equation} where $\widetilde{\mathcal{A}}$ and $\widetilde{\mathcal{C}}$ are given by (\ref{coef4}).
\end{theorem}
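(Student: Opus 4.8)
The plan is to run the same machinery as in the proof of Theorem \ref{th2}, the only structural differences being the factor $\tfrac{1}{\epsilon^2}$ in (\ref{long1}) in place of $\tfrac1\epsilon$ and the extra degeneracy constraint on $\Theta_{thr}$. First I would fix a test function $\psi^\epsilon(t,x)=\psi(t,\tfrac t\epsilon,x)$, compactly supported in $[0,T)\times\Omega$ and $1$-periodic in $\theta$ (note that in the long term (\ref{0beq3}) carries only the single fast scale $\theta=\tfrac t\epsilon$), multiply (\ref{long1}) by $\psi^\epsilon$, and integrate over $[0,T)\times\Omega$. Integrating by parts once in $t$ and applying Green's formula twice in $x$ transfers all derivatives onto $\psi^\epsilon$, and the Robin condition $\tfrac{\partial z^\epsilon}{\partial n}=g-z^\epsilon$ is used to rewrite the boundary integral, exactly as in (\ref{vi})--(\ref{vie1}). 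The uniform bound (\ref{eqborn}) from Theorem \ref{th1.0} guarantees that $(z^\epsilon)$ is bounded in $L^\infty([0,T),L^2(\Omega))$, so the two-scale compactness theorem furnishes a subsequence and a profile $U\in L^\infty([0,T],L^\infty_\#(\R,L^2(\Omega)))$.

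The decisive point is the scaling balance. Since $\psi^\epsilon$ carries only $\theta=\tfrac t\epsilon$, the chain rule gives $\partial_t\psi^\epsilon=(\partial_t\psi)^\epsilon+\tfrac1\epsilon(\partial_\theta\psi)^\epsilon$. Multiplying the whole weak identity by $\epsilon^2$ to clear the $\tfrac{1}{\epsilon^2}$ prefactor, the initial-data term and the term carrying $(\partial_t\psi)^\epsilon$ become $O(\epsilon^2)$, and---crucially---the term carrying $(\partial_\theta\psi)^\epsilon$ is only $O(\epsilon)$; all three drop out. What survives at order one are exactly the diffusion and transport contributions $\int_0^T\int_\Omega z^\epsilon\,\nabla\cdot(\mathcal A^\epsilon\nabla\psi^\epsilon)$ and $\int_0^T\int_\Omega \mathcal C^\epsilon\cdot\nabla\psi^\epsilon$ with their boundary counterparts. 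This is precisely why, in contrast with the short/mean-term limit (\ref{eqlim2}), the $\tfrac{\partial U}{\partial\theta}$ term disappears. Passing to the two-scale limit and undoing Green's formula yields the weak form of $-\nabla\cdot(\widetilde{\mathcal A}\nabla U)=\nabla\cdot\widetilde{\mathcal C}$ on $(0,T)\times\R\times\Omega$, and matching the boundary integrals gives the Robin condition $\tfrac{\partial U}{\partial n}+U=g$.

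Next I would identify the limit coefficients as in the last part of the proof of Theorem \ref{th2}, by testing the defining expressions (\ref{homoeq})--(\ref{homoeq1}) against $\psi^\epsilon$ and letting $\epsilon\to0$. Here the long-term expansion (\ref{0beq3}) is essential: the factor $1-b\epsilon\gm$ tends to $1$, while $\gu=\mathcal U_0(\tfrac t\epsilon)+\epsilon\mathcal U_1+\epsilon^2\mathcal U_2$ two-scale converges to its leading term $\mathcal U_0(\theta)$, so $\widetilde{\mathcal A}$ and $\widetilde{\mathcal C}$ collapse to the $x$-independent profiles of (\ref{coef4}), in agreement with (\ref{coef3}). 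This $x$-independence is itself informative: since $\widetilde{\mathcal C}=\widetilde{\mathcal C}(\theta)$ one has $\nabla\cdot\widetilde{\mathcal C}=0$ and $\nabla\cdot(\widetilde{\mathcal A}\nabla U)=\widetilde{\mathcal A}(\theta)\Delta U$, so wherever $\widetilde{\mathcal A}(\theta)>0$ the limit problem reduces to $\Delta U=0$ with the $\theta$-independent Robin data $g$; its solution does not depend on $\theta$, and $\tfrac{\partial U}{\partial\theta}=0$ holds automatically off $\Theta_{thr}$. The condition therefore only needs to be established on the degenerate set $\Theta_{thr}$, where the elliptic equation carries no information about $U$.

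The genuinely new ingredient is thus $\tfrac{\partial U}{\partial\theta}=0$ on $\Theta_{thr}$, and I expect this to be the main obstacle. To produce it I would localise in $\Theta_{thr}=\{\widetilde{\mathcal A}<\tilde G_{thr}\}$ and invoke (\ref{2.7}): there $\nabla\widetilde{\mathcal A}_\epsilon=0$ and $\nabla\cdot\widetilde{\mathcal C}_\epsilon=0$, so neither the diffusion nor the transport term acts through the space variable. Repeating the weak formulation but multiplying only by $\epsilon$, and testing against functions $\psi$ supported in the interior of $\Theta_{thr}$ and harmonic in $x$ there (so that, after one integration by parts, the residual $\tfrac1\epsilon\int z^\epsilon\,\mathcal A^\epsilon\Delta\psi^\epsilon$ vanishes and, by (\ref{2.7}), the $\nabla\widetilde{\mathcal A}_\epsilon$ and $\nabla\cdot\widetilde{\mathcal C}_\epsilon$ terms vanish), every spatially driven contribution is of order $\epsilon$, while the time derivative leaves $\int z^\epsilon(\partial_\theta\psi)^\epsilon$ at order one. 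Passing to the two-scale limit and integrating by parts in $\theta$ then forces $\tfrac{\partial U}{\partial\theta}=0$ almost everywhere on $\Theta_{thr}$. The delicate step---absorbing most of the work---is to make this separation of scales rigorous: one must justify that the admissible (harmonic, degenerate-set-supported) test functions suffice to characterise $\tfrac{\partial U}{\partial\theta}$ there, combining (\ref{2.7}), the threshold lower bound in (\ref{hyp1}), and the uniform estimate (\ref{eqborn}).
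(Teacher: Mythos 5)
Your proposal tracks the paper's own proof of Theorem \ref{th3} almost step for step in its first three paragraphs: the same weak formulation against $\psi^\epsilon(t,x)=\psi(t,\tfrac t\epsilon,x)$, the same chain-rule splitting (\ref{deriv1}), the same multiplication by $\epsilon^2$ under which the initial-data term and the $(\partial_t\psi)^\epsilon$ term are $O(\epsilon^2)$ and the $(\partial_\theta\psi)^\epsilon$ term is $O(\epsilon)$, so that only the diffusion and transport contributions survive, whose two-scale limits give (\ref{vie}) and hence $-\nabla\cdot(\widetilde{\mathcal A}\nabla U)=\nabla\cdot\widetilde{\mathcal C}$ together with the Robin condition; the identification of the limit coefficients through (\ref{0beq3}) is also the paper's (carried out in the proof of Theorem \ref{th2} and imported here via (\ref{coef3})--(\ref{coef4})). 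Your added remark that $\widetilde{\mathcal C}=\widetilde{\mathcal C}(\theta)$ forces $\nabla\cdot\widetilde{\mathcal C}=0$, so the limit equation is effectively $\Delta U=0$ wherever $\widetilde{\mathcal A}(\theta)>0$, is correct and not in the paper, but it is an observation rather than a change of method.

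The genuine divergence is the step you yourself single out as the hard one, $\tfrac{\partial U}{\partial\theta}=0$ on $\Theta_{thr}$, and your variant there does not work as stated. The paper tests with $\psi^\epsilon(t,x)=\psi(\tfrac t\epsilon)$ depending \emph{only} on $\theta$, compactly supported (in $\theta$) in the degenerate set: then $\nabla\psi^\epsilon=0$ and $\Delta\psi^\epsilon=0$ hold trivially, (\ref{2.7}) kills $\nabla\widetilde{\mathcal A}_\epsilon$ and $\nabla\cdot\widetilde{\mathcal C}_\epsilon$, and after multiplying by $\epsilon$ only the $\theta$-derivative term survives. You instead require $\psi$ to be \emph{harmonic in $x$} and \emph{supported in the interior of $\Theta_{thr}$}. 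In the long-term model $\widetilde{\mathcal A}=ag_a(|\mathcal U_0(\theta)|)$ is $x$-independent, so $\Theta_{thr}$ is a cylinder of the form $[0,T]\times\{\theta:\cdots\}\times\Omega$; hence "supported in the interior of $\Theta_{thr}$" means compactly supported in $x$ inside $\Omega$, and a harmonic function with compact support in a connected open set is identically zero --- your test class is vacuous. If instead you let $\psi$ reach $\partial\Omega$, the Green's-formula boundary integrals, e.g. $\epsilon\cdot\tfrac{1}{\epsilon^2}\int_{\partial\Omega}\mathcal A^\epsilon(g-z^\epsilon)\psi^\epsilon d\sigma$ and $\epsilon\cdot\tfrac{1}{\epsilon^2}\int_{\partial\Omega}\bigl(\mathcal A^\epsilon z^\epsilon\tfrac{\partial\psi^\epsilon}{\partial n}-\mathcal C^\epsilon\cdot n\,\psi^\epsilon\bigr)d\sigma$, survive at order $1/\epsilon$, and neither (\ref{2.7}) nor (\ref{eqborn}) controls them: the hypothesis annihilates derivatives of the coefficients on the degenerate set, not the coefficients themselves. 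The repair is simply the paper's choice of $x$-independent test functions, for which at least $\tfrac{\partial\psi^\epsilon}{\partial n}=0$ (the paper silently discards the remaining $\mathcal A^\epsilon(g-z^\epsilon)$ boundary term, a weakness it does not resolve either). Finally, your closing caveat is well placed, and in fact applies to the paper's own argument: $\theta$-only (or harmonic-in-$x$) test functions can only see $x$-averages (or harmonic projections) of $U$, so as written both arguments deliver $\tfrac{\partial}{\partial\theta}$ of an averaged quantity rather than the pointwise statement claimed in (\ref{eqlim3}).
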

%
\begin{proof}
Multiplying (\ref{long1}) by $\psi^\epsilon(t,x)=\psi(t,\frac t\epsilon,x)$ regular with compact support in $[0,T)\times\Omega$ periodic with period 1 in $\theta$ and integrating  over $[0,T)\times\Omega$ we get :
 \begin{equation}
 -\int_{\Omega} z_0(x)\psi(0,0,x) dx+\int_{\Omega}\int^T_0\frac{\partial\psi^\epsilon }{\partial t} z^\epsilon dtdx+\frac{1}{\epsilon^2}\int_{\Omega}\int^T_0 \mathcal A^{\epsilon}\nabla z^{\epsilon}\nabla\psi^\epsilon dtdx$$
 $$-\frac{1}{\epsilon^2}\int_0^T\int_{\partial\Omega} \mathcal A^\epsilon\frac{\partial z^\epsilon}{\partial n}\psi^\epsilon d\sigma=-\frac{1}{\epsilon^2} \int_{\Omega}\int^T_0\mathcal C^\epsilon\cdot\nabla\psi^\epsilon dtdx+\frac{1}{\epsilon^2}\int_0^T\int_{\partial\Omega} \mathcal C^\epsilon\psi^\epsilon.nd\sigma.
\end{equation}
But $\frac{\partial \psi^\epsilon}{\partial t
}$ writes
\begin{equation}\label{deriv1}
\frac{\partial \psi^\epsilon}{\partial t
}=\Big(\frac{\partial\psi}{\partial t}\Big)^\epsilon+
\frac{1}{\epsilon}\Big(\frac{\partial\psi}{\partial \theta}\Big)^\epsilon,
\end{equation}
where
\begin{equation}
\Big(\frac{\partial\psi}{\partial t}\Big)^\epsilon(t,x)=\frac{\partial\psi}{\partial t}(t,\frac{t}{\epsilon},x),\,\,
\,\,\text{and}\,\,\Big(\frac{\partial\psi}{\partial \theta}\Big)^\epsilon(t,x)=\frac{\partial\psi}{\partial \theta}(t,\frac t\epsilon,x)
\end{equation}
Thus,  because of (\ref{deriv1}) and hypothesis (\ref{2.7}) we get
\begin{equation}\label{deriv2}
 \int_{\Omega}\int^T_0 z^\epsilon\Big(\Big(\frac{\partial\psi}{\partial t}\Big)^\epsilon+
\frac{1}{\epsilon}\Big(\frac{\partial\psi}{\partial \theta}\Big)^\epsilon\Big) dtdx+\frac{1}{\epsilon^2}\int_{\Omega}\int^T_0\mathcal A^{\epsilon}\nabla z^\epsilon\,\nabla \psi^\epsilon dtdx $$
$$=-\frac{1}{\epsilon^2} \int_{\Omega}\int^T_0\mathcal C^\epsilon\cdot\nabla\psi^\epsilon dtdx+\int_{\Omega} z_0(x)\psi(0,0,x) dx.
\end{equation}
Then, multiplying (\ref{deriv2}) by $\epsilon^2,$ we get
\begin{equation}
\epsilon^2 \int_{\Omega}\int^T_0 z^\epsilon\Big(\frac{\partial\psi}{\partial t}\Big)^\epsilon dtdx+
\epsilon\int_{\Omega}\int^T_0\Big(\frac{\partial\psi}{\partial \theta}\Big)^\epsilon z^\epsilon dtdx+ \int_{\Omega}\int^T_0\nabla\cdot(\mathcal A^{\epsilon}\nabla \psi^\epsilon) z^\epsilon dtdx $$$$
=- \int_{\Omega}\int^T_0\mathcal C^\epsilon\cdot\nabla\psi^\epsilon dtdx-\epsilon^2\int_{\Omega} z_0(x)\psi(0,0,x) dx.
\end{equation}
As $\psi^\epsilon$ is regular with compact support on $[0,T)\times\Omega,$ and $\mathcal A^\epsilon$ is a regular function, the functions
$\Big(\frac{\partial\psi}{\partial t}\Big)^\epsilon,\,\,\Big(\frac{\partial\psi}{\partial \tau}\Big)^\epsilon,\,\,
\Big(\frac{\partial\psi}{\partial \theta}\Big)^\epsilon,
\,\,\nabla\cdot(\mathcal A^{\epsilon}\nabla \psi^\epsilon)\Big)$ and $\nabla\psi^\epsilon$ can be considered as test functions. Then  using  two-scale convergence as $\epsilon\rightarrow 0,$ we get
\begin{equation}\label{vie}
\int_0^1\int_{\Omega}\int^T_0\nabla\cdot(\widetilde{\mathcal A}\nabla \psi) U dtd\theta dx=- \int_0^1\int_{\Omega}\int^T_0\widetilde{\mathcal C}\cdot\nabla\psi dtd\theta dx.
\end{equation}
Using Green Formula, we get
\begin{equation}
-\int_{\Omega}\int_0^1\int^T_0\nabla\cdot(\widetilde{\mathcal A}\nabla U)\psi dtd\theta dx=\int_0^1\int_{\Omega}\int^T_0\nabla\cdot\widetilde {\mathcal C}\psi dtd\theta dx  
\end{equation}
which is the weak formulation of
\begin{equation}
-\nabla\cdot(\widetilde {\mathcal A}\nabla U) =\nabla\cdot \widetilde{\mathcal C}\\
\end{equation}
Taking test function $\psi^\epsilon(t,x)=\psi(\frac t\epsilon)$ depending only on $\theta,$ and regular with compact support on $\Theta_{thr}$ we get
\begin{equation}
\int_{\Omega}\int^T_0\frac1\epsilon(\frac{\partial \psi}{\partial\theta})^\epsilon z^\epsilon dt\,dx+\frac{1}{\epsilon^2}\int_{\Omega}\int^T_0z^\epsilon    \widetilde{\mathcal A}_\epsilon(\Delta\psi)^\epsilon dt\,dx= \int_\Omega z_0(x)\psi(0,0,x)dx 
\end{equation}
As $\psi^\epsilon$ depends only on $\theta,\,\,\Delta\psi^\epsilon=0,$ then we have
\begin{equation}
\int_{\Omega}\int^T_0(\frac{\partial \psi}{\partial\theta})^\epsilon z^\epsilon dt\,dx= \epsilon\int_\Omega z_0(x)\psi(0,0,x)dx 
\end{equation}
Using Two-scale convergence and passing to the limit, we get
$$\int_{\Omega}\int^T_0\int_0^1\frac{\partial \Psi}{\partial \theta} Ud\theta\,dt\,dx=0,$$
thus 
\begin{equation}
\frac{\partial U}{\partial\theta}=0 \,\,\text{on}\,\,\Theta_{thr}.
\end{equation}
\end{proof}
\subsection{Corrector results}
With Theorem \,\ref{th2} and Theorem\,\ref{th3} in hands, we are going to characterize the first order corrector $U^1$ which prove the best 
approximation by homogenization or two-scale convergence like $z^\epsilon(t,x)\sim U(t,\frac t\epsilon,x)+\epsilon U^1(t,\frac t\epsilon,x)$. To obtain this approximation, 
we look for the equation satisfied by $\frac{z^\epsilon(t, x)-U^\epsilon(t,x)}{\epsilon},$  with \\$U^\epsilon(t,x)=U(t,\frac t\epsilon, x).$ 
Then, we have the following theorem valid for short term model. The corrector result in the mean term is also obtained in the same way.
\begin{theorem}\label{th4}
Under assumptions (\ref{hyp1}), (\ref{hupp1}), (\ref{2.6}), (\ref{2.7}), (\ref{coef1}) and (\ref{coef2})
 considering $z^\epsilon$ the
solution to (\ref{short1}) with coefficients given by (\ref{homoeq})-(\ref{homoeq3}) and 
$U^\epsilon (t, x) = U(t,\frac{t}{\epsilon},x)$ where $U$ is the solution to (\ref{eqlim2}), for any T not
depending on $\epsilon$, the following estimate holds for $z^{\epsilon}-U^{\epsilon}$
\begin{equation}\Big\|\frac{z^{\epsilon}-U^{\epsilon}}{\epsilon}\Big\|_{  L^{\infty}([0,T), L^{2}(\Omega))}\leq\alpha,
\end{equation}
in short term where $\alpha$ is a constant not depending on $\epsilon.$
Furthermore, sequence $\frac{z^\epsilon-U^\epsilon}{\epsilon} $ two-scale converges to a profile $U^1\in
L^\infty([0,T],L^\infty_{\#}(\mathbb R, L^2(\Omega)))$ which is the unique solution to
\begin{equation}\left\{\begin{array}{ccc}
\ds\frac{\partial U^1}{\partial \theta}-\nabla\cdot\Big(\widetilde{\mathcal A}\nabla U^1\Big)=\nabla\cdot\widetilde{\mathcal C}_1+
\frac{\partial U}{\partial t}+
\nabla\cdot\big(\widetilde{\mathcal A}_1\nabla U\big)\,\,\text{in}\,\,]0,T[\times\mathbb R\times\Omega\\
\ds\frac{\partial U^1}{\partial n}+U^1=0\,\,\text{on}\,\, ]0,T[\times\mathbb R\times\partial\Omega\\
U^1(0,0,x)=U^1(x)\,\,\text{in}\,\, \Omega.
\end{array}\right.
\end{equation}
\end{theorem}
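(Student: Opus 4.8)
The plan is to obtain the equation solved by $w^\epsilon:=(z^\epsilon-U^\epsilon)/\epsilon$ and then to run, on $w^\epsilon$, the energy machinery already built for Theorems \ref{1th2}--\ref{biba2}. Writing $f^\epsilon(t,x)=f(t,\frac t\epsilon,x)$ for evaluation at the fast scale and $U^\epsilon(t,x)=U(t,\frac t\epsilon,x)$, the chain rule gives $\frac{\partial U^\epsilon}{\partial t}=\big(\frac{\partial U}{\partial t}\big)^\epsilon+\frac1\epsilon\big(\frac{\partial U}{\partial \theta}\big)^\epsilon$. Evaluating the homogenized equation (\ref{eqlim2}) at $\theta=\frac t\epsilon$ lets me trade $\frac1\epsilon(\partial_\theta U)^\epsilon$ for $\frac1\epsilon\nabla\cdot(\widetilde{\mathcal A}^\epsilon\nabla U^\epsilon)+\frac1\epsilon\nabla\cdot\widetilde{\mathcal C}^\epsilon$, where $\widetilde{\mathcal A}^\epsilon:=\widetilde{\mathcal A}(t,\frac t\epsilon,x)$. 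From (\ref{homoeq})--(\ref{homoeq1}) one reads off the expansions $\mathcal A^\epsilon=\widetilde{\mathcal A}^\epsilon+\epsilon\widetilde{\mathcal A}_1^\epsilon$ and $\mathcal C^\epsilon=\widetilde{\mathcal C}^\epsilon+\epsilon\widetilde{\mathcal C}_1^\epsilon$ with $\widetilde{\mathcal A}_1=-ab\mathcal M g_a(|\mathcal U|)$ and $\widetilde{\mathcal C}_1=-cb\mathcal M g_c(|\mathcal U|)\frac{\mathcal U}{|\mathcal U|}$; subtracting the $z^\epsilon$ and $U^\epsilon$ equations and dividing by $\epsilon$ then yields
\begin{equation}
\frac{\partial w^\epsilon}{\partial t}-\frac1\epsilon\nabla\cdot\big(\mathcal A^\epsilon\nabla w^\epsilon\big)=\frac1\epsilon\Big(\nabla\cdot\widetilde{\mathcal C}_1^\epsilon+\nabla\cdot(\widetilde{\mathcal A}_1^\epsilon\nabla U^\epsilon)+\big(\tfrac{\partial U}{\partial t}\big)^\epsilon\Big),
\end{equation}
together with the homogeneous Robin condition $\frac{\partial w^\epsilon}{\partial n}+w^\epsilon=0$ coming from subtracting the boundary data of $z^\epsilon$ and of $U$ (the sign of the $\partial_t U$ term being the one forced by the same computation that produced (\ref{eqlim2})).

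The genuine difficulty is the factor $1/\epsilon$ on the right-hand side: testing against $w^\epsilon$ produces a term $\frac1\epsilon\int_\Omega(\partial_t U)^\epsilon w^\epsilon\,dx$ which no amount of coercivity of $\frac1\epsilon\int_\Omega\mathcal A^\epsilon|\nabla w^\epsilon|^2\,dx$ can absorb. This resonance is neutralised by the corrector itself. I would therefore first solve the cell problem
\begin{equation}
\frac{\partial U^1}{\partial\theta}-\nabla\cdot(\widetilde{\mathcal A}\nabla U^1)=\nabla\cdot\widetilde{\mathcal C}_1+\frac{\partial U}{\partial t}+\nabla\cdot(\widetilde{\mathcal A}_1\nabla U),
\end{equation}
taken $1$-periodic in $\theta$ with $\frac{\partial U^1}{\partial n}+U^1=0$; its right-hand side is a fixed function of $U$, already controlled by Theorem \ref{biba2}. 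Existence, uniqueness and the $H^1$ and $\partial_t$ bounds for $U^1$ follow verbatim from the regularise-and-pass-to-the-limit scheme of Theorems \ref{1th2}--\ref{biba2} (add $\nu$, add the $\mu\mathcal S^\nu_\mu$ term, send $\mu\to0$ then $\nu\to0$), the coercivity being supplied by $\widetilde{\mathcal A}\ge\widetilde G_{thr}$ on $[\theta_\alpha,\theta_\omega]$.

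Setting $(U^1)^\epsilon(t,x)=U^1(t,\frac t\epsilon,x)$ and $r^\epsilon:=w^\epsilon-(U^1)^\epsilon$, differentiating $(U^1)^\epsilon$ in $t$ and invoking the cell problem at $\theta=\frac t\epsilon$ cancels the resonant $\frac1\epsilon(\partial_\theta U^1)^\epsilon$ exactly against the $1/\epsilon$ forcing; using $\mathcal A^\epsilon=\widetilde{\mathcal A}^\epsilon+\epsilon\widetilde{\mathcal A}_1^\epsilon$ once more, $r^\epsilon$ solves
\begin{equation}
\frac{\partial r^\epsilon}{\partial t}-\frac1\epsilon\nabla\cdot(\mathcal A^\epsilon\nabla r^\epsilon)=-\big(\tfrac{\partial U^1}{\partial t}\big)^\epsilon+\nabla\cdot(\widetilde{\mathcal A}_1^\epsilon\nabla (U^1)^\epsilon),
\end{equation}
with homogeneous Robin data and a right-hand side bounded in $L^2(\Omega)$ uniformly in $\epsilon$ (the divergence term is kept in divergence form so that, after integration by parts, it pairs with $\nabla r^\epsilon$ and is absorbed by Young's inequality). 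This is precisely the setting of Theorem \ref{th1.0}: multiply by $r^\epsilon$, integrate over $\Omega$, use the Robin condition and Friedrichs' inequality on the boundary term, then integrate in $t$ to obtain $\|r^\epsilon\|_{L^\infty([0,T),L^2(\Omega))}\le C$ uniformly in $\epsilon$, the initial contribution $\|r^\epsilon(0,\cdot)\|_2$ being controlled under well-prepared data exactly as in Theorem \ref{th1.0}. Since $(U^1)^\epsilon$ is uniformly bounded in $L^\infty([0,T),L^2(\Omega))$, the triangle inequality delivers $\|w^\epsilon\|_{L^\infty([0,T),L^2(\Omega))}\le\alpha$, the first assertion.

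Finally the uniform bound on $w^\epsilon$ yields, by the two-scale compactness theorem, a subsequence two-scale converging to some $W\in L^\infty([0,T],L^\infty_\#(\R,L^2(\Omega)))$. Testing the $w^\epsilon$-equation against $\psi^\epsilon(t,x)=\psi(t,\frac t\epsilon,x)$ and integrating by parts exactly as in the proof of Theorem \ref{th2}, the $O(1/\epsilon)$ contributions assemble into $\frac{\partial W}{\partial\theta}-\nabla\cdot(\widetilde{\mathcal A}\nabla W)$ and the forcing into $\nabla\cdot\widetilde{\mathcal C}_1+\frac{\partial U}{\partial t}+\nabla\cdot(\widetilde{\mathcal A}_1\nabla U)$, so that $W$ solves the corrector problem, and a second Green's formula recovers $\frac{\partial W}{\partial n}+W=0$. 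Uniqueness of the corrector, proved as in Theorem \ref{biba2} by testing the difference of two solutions against itself and using the coercivity of $\widetilde{\mathcal A}$ on $[\theta_\alpha,\theta_\omega]$ with Friedrichs, forces $W=U^1$ and promotes the convergence to the whole sequence. The single real obstacle throughout is the resonant $1/\epsilon$ forcing; once it is neutralised by subtracting $(U^1)^\epsilon$, every remaining step merely transcribes an estimate already established.
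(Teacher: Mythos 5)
Your proposal is correct and follows essentially the same route as the paper: the same expansion $\mathcal A^\epsilon=\widetilde{\mathcal A}^\epsilon+\epsilon\widetilde{\mathcal A}_1^\epsilon$, $\mathcal C^\epsilon=\widetilde{\mathcal C}^\epsilon+\epsilon\widetilde{\mathcal C}_1^\epsilon$, the same equation (\ref{eq5.8}) for $(z^\epsilon-U^\epsilon)/\epsilon$ with homogeneous Robin data, a uniform bound obtained by neutralizing the $1/\epsilon$ forcing against a fast-periodic profile, and the same two-scale passage to the corrector equation. What you spell out explicitly --- solving the periodic cell problem for $U^1$ and subtracting $(U^1)^\epsilon$ before running the energy estimate --- is precisely the content of the paper's appeal to ``the same argument as in the proof of Theorem \ref{th1.0}'', so your write-up is an unpacking of the paper's (much terser) proof rather than a genuinely different argument.
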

\begin{proof}
Since the  coefficients   $\mathcal A^\epsilon(t,x)$ and $\mathcal C^\epsilon(t,x)$ of (\ref{short1})
two scale converges to $\widetilde{\mathcal A}(t,\theta ,x)$ and $\widetilde{\mathcal C}(t,\theta ,x),$  
then these coefficients can be set in the short term in the form
\begin{equation}\label{beq11}
\mathcal A^\epsilon(t,x)=\widetilde{\mathcal A}^\epsilon(t,x)+\epsilon\widetilde{\mathcal A}_1^\epsilon(t,x)\,\,\text{and}\,\,\mathcal C^\epsilon(t,x)=\widetilde{\mathcal C}^\epsilon(t,x)+\epsilon\widetilde{\mathcal C}_1^\epsilon(t,x)\end{equation}
where \begin{equation}
\widetilde{\mathcal A}^\epsilon(t,x)=\widetilde{\mathcal A}(t,\frac t\epsilon,x),\,\,\widetilde{\mathcal C}^\epsilon(t,x)=
\widetilde{\mathcal C}(t,\frac t\epsilon,x)
\end{equation}
and
\begin{equation}
\widetilde{\mathcal A}_1^\epsilon(t,x)=\widetilde{\mathcal A}_1(t,\frac t\epsilon,x),\,\,\widetilde{\mathcal C}_1^\epsilon(t,x)
=\widetilde{\mathcal C}_1(t,\frac t\epsilon,x)
\end{equation}
We have also to notice that, under the same assumptions (\ref{eq40}) and (\ref{hyp1}) the coefficients 
\begin{equation} \label{3.23}\widetilde{\mathcal A}, \,\,\widetilde{\mathcal C},\, \,\widetilde{\mathcal A}_1, \,\widetilde{\mathcal C}_1, \,\, \widetilde{\mathcal A}^\epsilon, \,\,
\widetilde{\mathcal C}^\epsilon, \,\,\widetilde{\mathcal A}_1^\epsilon, \,\,\text{and}\,\,\widetilde{\mathcal C}_1^\epsilon\,\,\text{ are regular and bounded}.\end{equation}
Because of (\ref{beq11}), equation (\ref{short1}) becomes
\begin{equation}\label{ba2}\left\{\begin{array}{ccc}
\ds\frac{\partial z^\epsilon}{\partial t}-\frac1\epsilon\nabla\cdot\big(\widetilde{\mathcal A}^\epsilon\nabla z^\epsilon\big)=\frac1\epsilon\nabla\cdot\widetilde{\mathcal C}^\epsilon+\nabla\cdot\big(\widetilde{\mathcal A}_1^\epsilon\nabla z^\epsilon\big)+\nabla\cdot\widetilde{\mathcal C_1}^\epsilon\,\,\text{in}\,\,]0,T[\times\Omega\\
\frac{\partial z^\epsilon}{\partial n}+z^\epsilon= g\,\,\text{on}\,\,]0,T[\times\partial\Omega.
\end{array}\right.
\end{equation}
From (\ref{b1}) and using the fact that 
\begin{equation}
\frac{\partial U^\epsilon}{\partial t}=\Big(\frac{\partial U}{\partial t}\Big)^\epsilon
+\frac1\epsilon\Big(\frac{\partial U}{\partial \theta}\Big)^\epsilon,
\end{equation}
where  $$\Big(\frac{\partial U}{\partial t}\Big)^\epsilon(t,x)=\frac{\partial U}{\partial t}(t,\frac t\epsilon,x)
\,\,\text{and}\,\,\Big(\frac{\partial U}{\partial \theta}\Big)^\epsilon(t,x)=\frac{\partial U}{\partial \theta}
(t,\frac t\epsilon,x)$$
$U^\epsilon$ is solution to
\begin{equation}\label{ba1}\left\{\begin{array}{ccc}
\ds\frac{\partial U^\epsilon}{\partial t}-\frac1\epsilon\nabla\cdot\big(\widetilde{\mathcal A}^\epsilon\nabla U^\epsilon\big)=\frac1\epsilon\nabla\cdot\widetilde{\mathcal C}^\epsilon+
\Big(\frac{\partial U}{\partial t}\Big)^\epsilon\\
\ds\frac{\partial U^\epsilon}{\partial n}+U^\epsilon= g.
\end{array}\right.
\end{equation}
From formulas (\ref{ba2}) and (\ref{ba1}) we deduce that $\frac{z^\epsilon-U^\epsilon}{\epsilon}$ is solution to
\begin{equation}\label{eq5.8}\left\{\begin{array}{ccc}
\ds\frac{\partial \Big(\frac{z^\epsilon-U^\epsilon}{\epsilon} \Big)}{\partial t}-\frac1\epsilon\nabla\cdot\Big((\widetilde{\mathcal A}^\epsilon+\epsilon\widetilde{\mathcal A}_1^\epsilon)\nabla \Big(\frac{z^\epsilon-U^\epsilon}{\epsilon} \Big)\Big)=\frac1\epsilon\Big(\nabla\cdot\widetilde{\mathcal C}_1^\epsilon+
\Big(\frac{\partial U}{\partial t}\Big)^\epsilon+
\nabla\cdot\big(\widetilde{\mathcal A}^\epsilon_1\nabla U^\epsilon\big)\Big)\\
\,\,\text{in}\,\, ]0,T[\times\Omega\\
\ds\frac{\partial \Big(\frac{z^\epsilon-U^\epsilon}{\epsilon}\Big)}{\partial n}+\frac{z^\epsilon-U^\epsilon}{\epsilon}=0\,\,\text{on}\,\, ]0,T[\times\partial\Omega
\end{array}\right.
\end{equation}
 All the coefficients of (\ref{eq5.8}) are regular and bounded, then existence
 of $\Big(\frac{z^\epsilon-U^\epsilon}{\epsilon} \Big)$ is a consequence result of Ladyzenskaja, 
 Sollonnikov and Ural' Ceva \cite{LadSol}. We have to notice that, as the boundary condition of (\ref{eq5.8})  
 is homogeneous, there is no  the boundary term to be considered. Then using the same argument as in the proof of theorem  
 (\ref{th1.0}), we get that $\Big(\frac{z^\epsilon-Z^\epsilon}{\epsilon} \Big)$  solution to (\ref{eq5.8}) is bounded
in $L^2([0,T), L^2(\Omega))$
 and two-scale converge to $U^1$
 \begin{equation}\left\{\begin{array}{ccc}
\ds\frac{\partial U^1}{\partial \theta}-\nabla\cdot\Big(\widetilde{\mathcal A}\nabla U^1\Big)=\nabla\cdot\widetilde{\mathcal C}_1+
\frac{\partial U}{\partial t}+
\nabla\cdot\big(\widetilde{\mathcal A}_1\nabla U\big)\,\,\text{in}\,\, ]0,T[\times\mathbb R\times\Omega\\
\ds\frac{\partial U^1}{\partial n}+U^1=0\,\,\text{on}\,\, ]0,T[\times\mathbb R\times\partial\Omega\\
U^1(0,0,x)=U^1(x)\,\,\text{in}\,\,\Omega.
\end{array}\right.
\end{equation}
 \end{proof}
\textbf{Acknowledgement.} The authors would like to thank the referee for pointing errors or mistakes.
\end{remark}


\begin{thebibliography}{1}
\bibitem{allaire:1992}
G.~Allaire, \emph{{H}omogenization and Two-Scale convergence}, SIAM J.
  Math. Anal. \textbf{23} (1992), 1482--1518.
 \bibitem{al} J. R. L. Allen (1982d) \emph{Simple Models for the shape and symmetry of tidal sand waves:
  Dynamically stable symmetricall equilibrium forms} Marine Geology, 48:51-73.
  \bibitem{Amo} C.L Amos et E.L King (1984) \emph{Bed forms of the canadian eastern seabord: a comparaison with
  global occurences} Marine Geology, 57: 167-208.
  \bibitem{AndClauDou}  B.~Andreotti, P. Claudin, S.~Douady, \emph{Selection of dunes shpes and velocities. Part 1: Dynamics of sand wind barchans}, Eur. Phys J. B., p341-352, 2002.
  \bibitem{Badahi} M. O. Badahi, \emph{Analyse de la stabilit\'e de forme g\'eom\'etrique et asymptotique d'\'equations aux dériv\'ees partielles}, PHD thesis, UCAD, 2016.
  \bibitem{BadahiFaye} M. O. Badahi, I. Faye, D. Seck, Homogenization and transport equation: the case of desert and and piles, to appear in Non linear studies.
      \bibitem{Bagnold1} R.A. Bagnold, \emph{ The physics of blown sand and desert dunes}, Chapmann and Hall,1941.
\bibitem{Bagnold1} R.A. Bagnold, \emph{The movement of desert sand}, Proceedings of the Royal
  Society of London A \textbf{157} (1936), 594--620.
  \bibitem{bail}J.A Baillard  and D.L Inman  (1981), \emph{An energeties bed load
  model for plane sloping beach local transport}, Journal of Geophysical research, vol.86 (C3), pp.2035-2043 
  \bibitem{bern} S. Bern\'e , G Allen  \emph{Essai de synth\`ese sur les dunes hydrauliques g\'antes tidales 
  actuelles}. Bulletin de la soci\'et\'e G\'eologique de France 6: 1145-1160 
  \bibitem{berne} S. Bern\'e  1991 \emph{Architecture et dynamique des dunes tidales}. PhD thesis, University of lille 
  Villeneuve-d'escq; 295 pp. 
  \bibitem{bijker} E.W Bijker  (1967) \emph{Some consideration about scales for coastal models 
  with movable bed, Deft Hydraulic lab, Publication No:50. the Netherlands}
   \bibitem{bijk} E.W Bijker  (1971) \emph{long shore transport computation, journal of water ways}
   ASCE, vol.97, WW4, pp.687-703 
\bibitem{blonMech} P. Blondeaux, \emph{Mechanics of coastal forms}, Ann. Rev. Fluid Mech., \textbf{33} (2001), 339-
370. https://doi.org/10.1146/annurev.fluid.33.1.339.
\bibitem{brez} H.Br\'ezis, \emph{Analyse fonctionnelle, Th\'eorie et applications}, Masson, Paris, 1983.
\bibitem{CokeWarreGou} R.Coke, A. Warren, \emph{Desert Geomorphology}, UCL Press, 1993.
\bibitem{Baba} C. Diedhiou, B. K. Thiam, I. FAYE, \emph{Numerical simulations of sand transport}, in Festschrift in honor of the late professor Galaye Dia, Euclid projecteuclid.org. 
\textbf{(SPAS) Books Series. Calgary, Alberta 2018}.
\bibitem{DaJohn} G.P. Dawson, B. Johns, and R.L. Soulsby, \emph{A numerical model of shallow-water
flow over topography, in Physical Oceanography of Coastal and Shelf Seas, Elsevier
Oceanography Series}, Vol. \textbf{35}, Elsevier, 1983, 267?320.
https://doi.org/10.1016/s0422-9894(08)70504-x.
\bibitem{Vrien} H. J. De Vriend \emph{Steady Flow in Shallow Channel Bends}, PhD thesis, Delft Univ. of
Technology, 1981.
\bibitem{EngHan} F. Engelund and E. Hansen, \emph{Investigation of Flow in Alluvial Streams, Technical}
Report 9, Tech. Univ. Denmark Hydraulic Lab. Bull., 1966.
\bibitem{Evan} L. C. Evans,\emph{ Partial Differential Equations, Graduate Studies in Mathematics}, Vol.
\textbf{19}, American Mathematical Society, 1998. https://doi.org/10.1090/gsm/019.
\bibitem{Hersen} P. Hersen,\emph{Morphog\`enese et dynamique des Barchanes}, Th\`ese de doctorat, 2004.
\bibitem{FaFreSe} I.~ Faye, E.~ Fr\'enod, D.~ Seck, \emph{Singularly perturbed degenerated parabolic equations and application to seabed
 morphodynamics in tided environment},Discrete and Continuous Dynamical Systems, Vol 29 $N^{o} 3$ March 2011, pp 1001-1030.
 \bibitem{FaFrSePub} I. Faye, E. Fr\'enod, D. Seck,\emph{Two scale simulations of sand transport}, DCDS-S,Series S \text{8}, No 1,   February 2015, pp. 151-168.
\bibitem{FaFrSe1} I. Faye, E. Fr\'enod, D. Seck, \emph{Long term behaviour of singularly perturbed parabolic
degenerated equation, Journal of Nonlinear Analysis and Application}, \textbf{2} (2016), 82-
105. https://doi.org/10.5899/2016/jnaa-00297.
\bibitem{komorova} Komarova N.L. and Newell A, \emph{Nonlinear dynamics of sandbanks and
sandwaves}. J. Fluid Mech., vol. 415, pp. 285-321, 2000.
\bibitem{flem} B. W. Flemming, \emph{The role of the grain size, water depth and flow velocity as scaling factors controlling the size of subaquenous dunes,} Marine Sandwave Dynamics, International Workshop, March 23-24 2000( A. Trentesaux and T. Garlan, eds), University of Lille 1, France, 2000. 
\bibitem{Idi} D. Idier,\emph{ Dunes et Bancs de Sables du Plateau Continental: Observations In-situ et
Mod\'elisation Num\'erique}, PhD thesis, 2002.
\bibitem{Frij} Frijlink.H.C (1952) \emph{Discussion des formules de d\'ebit solide de Kalinske, Einstein et Meyer-Peter et
Mueller compte tenu des mesures r\'ecentes de transport dans les rivi\`eres Neerlandaises 2eme journal hydraulique
soci\'et\'e hydraulique de france, Grenoble} pp.98-103.
\bibitem{IdiAst} D. Idier, D. Astruc and S.J.M.H. Hulscher, \emph{Influence of bed roughness on dune and
megaripple generation, Geophysical Research Letters}, \textbf{31} (2004), 1-5.
https://doi.org/10.1029/2004gl019969.
\bibitem{JohSouCh} B. Johns, R.L. Soulsby and T.J. Chesher, \emph{The modelling od sandwave evolution
resulting from suspended and bed load transport of sediment}, J. Hydraul. Research,
\textbf{28} (1990), $n^0 3$, 355-374. https://doi.org/10.1080/00221689009499075.
\bibitem{Kenn} J.F. Kennedy,\emph{ The formation of sediment ripples, dunes and antidunes}, Ann. Rev.
Fluids Mech., 1 (1969), 147-168.
https://doi.org/10.1146/annurev.fl.01.010169.001051
\bibitem{KoNe} N. L. Komarova and A. C. Newell, \emph{Nonlinear dynamics of sandbanks and sandwaves},
J. Fluid Mech., \textbf{415} (2000), 285-321. https://doi.org/10.1017/s0022112000008855.
\bibitem{LadSol} O. A. Ladyzenskaia, V. A. Solonnikov and N. N. Uraltseva, \emph{Linear and Quasi-linear
Equations of Parabolic Type}, Vol. \textbf{23}, Translation of Mathematical Monographs,
American Mathematical Soc., 1968.
\bibitem{lebot} S. Le bot 2001. \emph{morphodynamique des dunes sous-marines sous l'influence des mar\'ees et des temp\^etes 
} Exemple du Pas de calais, PhD thesis, University of lille, 300 pp
\bibitem{Lion} J. L. Lions, \emph{Remarques sur les equations diff\'erentielles ordinaires}, Osaka Math. J.,
\textbf{15} (1963), 131-142.
\bibitem{Lions2} J. L. Lions, \emph{ Quelques m\'ethodes de r\'esolutions des probl\`emes aux limites non lin\'eaires}, Dunod-Gauthier Villars, 1969.
\bibitem{Lions1} J. L. Lions, \emph{Probl\`emes aux limites dans les \'equations aux d\'eriv\'ees partielles,} Presses universitaires de Montreal, 1965.
\bibitem{LionMag} J. L. Lions, E. Magenes, \emph{ Non- homogeneous Boundary Value Problems and Applications (3 volumes)}, Springer, 1992.
\bibitem{nguetseng:1989}
G.~Nguetseng, \emph{A general convergence result for a functional related to
  the theory of homogenization}, SIAM J. Math. Anal. \textbf{20} (1989),
  608--623.
 \bibitem{Bab} B. K. Thiam, M. M. T. Bald\'e, I. Faye,  D. Seck, \emph{Numerical Analysis of a corrector result: the case a short term dynamical of sand dunes},
 in Festschrift in honor of the late professor Galaye Dia, Euclid projecteuclid.org.\textbf{(SPAS) Books Series. Calgary, Alberta 2018}.
  \bibitem{Babou} B. K. Thiam, I. Faye, D. Seck, \emph{A Neumann boundary value problem of sand transport: Existence and homogenization of short term case,} Int. J. of Maths Anal., \textbf{12}, 2018, no 1, 25-52, https://doi.org/10.12988/ijma/2018. 711147
\bibitem{vanrijn} L.C Van Rijn, \emph{Sediment transport, Part I: Bed Load Transport}, October, 1984, Vol. 110.ASCE, $N^{o} 10$.   
\end{thebibliography}
\end{document}